\documentclass[12pt,oneside]{amsart}

\input{imports_and_macros}
\usepackage{amsrefs}
\usepackage{geometry}                % See geometry.pdf to learn the layout options. There are lots.
\usepackage{graphicx}
\usepackage{amssymb}
\usepackage{epstopdf}
\usepackage{color}
\usepackage{bbm, dsfont,hyperref}
\usepackage{tikz}
\usetikzlibrary{3d}

\usetikzlibrary{patterns.meta} 

\numberwithin{equation}{section}

\usepackage{verbatim}

\usepackage{amssymb,amsthm,amsmath,amssymb,wrapfig,dsfont}
\DeclareFontFamily{OML}{rsfs}{\skewchar\font'177}
\DeclareFontShape{OML}{rsfs}{m}{n}{ <5> <6> rsfs5 <7> <8> <9>
	rsfs7 <10> <10.95> <12> <14.4> <17.28> <20.74> <24.88> rsfs10 }{}
\DeclareMathAlphabet{\mathfs}{OML}{rsfs}{m}{n}

\usepackage{oplotsymbl}

\usepackage{ulem}
\theoremstyle{openproblem}

\usetikzlibrary{calc}
\usepackage{amsmath}
\usepackage{mathrsfs}            %数学花体 

\newcommand{\Pois}{\mathrm{Pois}}

\newcommand{\RNum}[1]{\uppercase\expandafter{\romannumeral #1\relax}}

\newcommand{\specificthanks}[1]{\@fnsymbol{#1}}

\usetikzlibrary{shapes}

\DeclareFontFamily{U}{mathx}{}
\DeclareFontShape{U}{mathx}{m}{n}{<-> mathx10}{}
\DeclareSymbolFont{mathx}{U}{mathx}{m}{n}
\DeclareMathAccent{\widehat}{0}{mathx}{"70}
\DeclareMathAccent{\widecheck}{0}{mathx}{"71}

\usetikzlibrary{patterns}

\def\hexagonsize{0.25cm}
\pgfdeclarepatternformonly
{hexagons}% name
{\pgfpointorigin}% lower left
{\pgfpoint{3*\hexagonsize}{0.866025*2*\hexagonsize}}%  upper right
{\pgfpoint{3*\hexagonsize}{0.866025*2*\hexagonsize}}%  tile size
{% shape description
	\pgfsetlinewidth{0.4pt}
	\pgftransformshift{\pgfpoint{0mm}{0.866025*\hexagonsize}}
	\pgfpathmoveto{\pgfpoint{0mm}{0mm}}
	\pgfpathlineto{\pgfpoint{0.5*\hexagonsize}{0mm}}
	\pgfpathlineto{\pgfpoint{\hexagonsize}{-0.866025*\hexagonsize}}
	\pgfpathlineto{\pgfpoint{2*\hexagonsize}{-0.866025*\hexagonsize}}
	\pgfpathlineto{\pgfpoint{2.5*\hexagonsize}{0mm}}
	\pgfpathlineto{\pgfpoint{3*\hexagonsize+0.2mm}{0mm}}
	\pgfpathmoveto{\pgfpoint{0.5*\hexagonsize}{0mm}}
	\pgfpathlineto{\pgfpoint{\hexagonsize}{0.866025*\hexagonsize}}
	\pgfpathlineto{\pgfpoint{2*\hexagonsize}{0.866025*\hexagonsize}}
	\pgfpathlineto{\pgfpoint{2.5*\hexagonsize}{0mm}}
	\pgfusepath{stroke}
}

\def\squaresize{0.3cm}
\pgfdeclarepatternformonly
{squares}% name
{\pgfpointorigin}% lower left
{\pgfpoint{\squaresize}{\squaresize}}%  upper right
{\pgfpoint{\squaresize}{\squaresize}}%  tile size
{% shape description
	\pgfsetlinewidth{0.4pt}
	\pgfpathmoveto{\pgfpointorigin}
	\pgfpathlineto{\pgfpoint{\squaresize}{0cm}}
	\pgfpathlineto{\pgfpoint{\squaresize}{\squaresize}}
	\pgfpathlineto{\pgfpoint{0cm}{\squaresize}}
	\pgfpathclose
	\pgfusepath{stroke}
}

\def\hexagonsize{0.25cm}
\pgfdeclarepatternformonly
{rotatedHexagons}% new name for the rotated shape
{\pgfpointorigin}% lower left
{\pgfpoint{3*\hexagonsize}{0.866025*2*\hexagonsize}}%  upper right
{\pgfpoint{3*\hexagonsize}{0.866025*2*\hexagonsize}}%  tile size
{% shape description
	\pgfsetlinewidth{0.4pt}
	\pgftransformshift{\pgfpoint{0mm}{0.866025*\hexagonsize}}
	\pgftransformrotate{90}  % rotation
	\pgfpathmoveto{\pgfpoint{0mm}{0mm}}
	\pgfpathlineto{\pgfpoint{0.5*\hexagonsize}{0mm}}
	\pgfpathlineto{\pgfpoint{\hexagonsize}{-0.866025*\hexagonsize}}
	\pgfpathlineto{\pgfpoint{2*\hexagonsize}{-0.866025*\hexagonsize}}
	\pgfpathlineto{\pgfpoint{2.5*\hexagonsize}{0mm}}
	\pgfpathlineto{\pgfpoint{3*\hexagonsize+0.2mm}{0mm}}
	\pgfpathmoveto{\pgfpoint{0.5*\hexagonsize}{0mm}}
	\pgfpathlineto{\pgfpoint{\hexagonsize}{0.866025*\hexagonsize}}
	\pgfpathlineto{\pgfpoint{2*\hexagonsize}{0.866025*\hexagonsize}}
	\pgfpathlineto{\pgfpoint{2.5*\hexagonsize}{0mm}}
	\pgfusepath{stroke}
}

\usepackage{amsfonts}

\newcommand{\rb}[1]{\left( #1 \right)}
\newcommand{\bb}[1]{\left[ #1 \right]}
\newcommand{\cb}[1]{\left\{ #1 \right\}}

\usetikzlibrary{arrows.meta}
\usetikzlibrary{shapes,snakes}

\begin{document}

%	\title{Harmonic measure of scattered sets in $\BZ^d$, $d\ge3$}
	\title{Pool model: a mass preserving multi particle aggregation process}

	\author{Zhenhao Cai$^1$}
	\address[Zhenhao Cai]{Faculty of mathematics and computer science, Weizmann Institute of Science, Rehovot, Israel}
	\email{caizhenhao@pku.edu.cn}
	\thanks{$^1$Faculty of mathematics and computer science, Weizmann Institute of Science, Rehovot, Israel}

	\author{Eviatar B. Procaccia$^2$}
	\address[Eviatar B. Procaccia]{Faculty of Data and Decision Sciences, Technion - Israel Institute of Technology, Haifa, Israel}
	\email{eviatarp@technion.ac.il}
	\thanks{$^2$Faculty of Data and Decision Sciences, Technion - Israel Institute of Technology, Haifa, Israel}

    \author{Yuan Zhang$^3$}
	\address[Yuan Zhang]{Center for Applied Statistics and School of Statistics, Renmin University of China, Beijing, China}
	\email{zhang$\_$probab@ruc.edu.cn}
	\thanks{$^3$Center for Applied Statistics and School of Statistics, Renmin University of China, Beijing, China}

	\maketitle
	
	%\tableofcontents
	\begin{abstract}

We present and study the Pool model in $\BR^2$, a rotationally symmetric analogue of Multi-Particle Diffusion-Limited Aggregation (MDLA), in which particles (“droplets”) perform continuous-time random walks and are absorbed upon entering a circular pool initially centered at the origin. Each absorbed particle increases the pool’s mass, and the pool expands so that its area grows accordingly, yielding a natural mass-preserving dynamics. A central tool which is of independent interest is a version of Kurtz's theorem for this model, depicting the field of particles conditioned on the growth of the pool as an independent non-homogeneous Poisson point process. 
\end{abstract}
\setcounter{tocdepth}{2}
\tableofcontents
\section{Introduction}\label{Sec:intro}

Multi-Particle DLA was defined in \cite{meakin1988multiparticle,rosenstock1980cluster} as a model for electro-chemical deposition devoid of an external electric field. The model is defined by initiating i.i.d. Poisson$(\lambda)$ number of particles in each vertex of $\BZ^d$ each preforming continuous time random walk. When a random walk in some location tries to step on the aggregate, the location is added to the aggregate, and all other walkers in the location are annihilated. It is proven \cite{sidoravicius2019multi, sly2020one} that for high enough $\lambda$, the MDLA grows at a linear speed. However, it is conjectured \cite{sidoravicius2019multi} for $d\ge 2$ that the linear speed holds for any $\lambda$. This conjecture remains open, and understanding about the fractal dimension of the aggregate is expected to be essential. 

In this paper, we define and study the Pool model, in which droplets of unit mass diffuse and aggregate in a pool whose geometry is always rotationally invariant (and of full dimension). In this model, droplets which are instantaneously engulfed by the pool are not annihilated, and instead the model is mass preserving (unlike MDLA). We believe this is a good physical model for liquid pooling of diffusing droplets. The main result presented in this paper is three distinct phases. Mainly, diffusive regime for $\lambda<1$, non-explosive but faster than any polynomial sub-linear for $\lambda=1$, and explosion in finite time a.s. for $\lambda>1$. Thus, we see that the linear speed conjecture does not hold for the small intensity pool model, much like the 1d MDLA \cite{kesten2008problem}, where the aggregate grows in full dimension. However, though the aggregate has rotational symmetry, the fact that $d>1$ presents several challenges. First, as the pool grows, the extra radius each new particle adds decreases to zero, thus the distance in which one needs to look for the next particle changes with time, as well as the changing curvature of the pool creates a non-monotonic effect for the particle density (it is easier to hit smaller curvature). The particle density, conditioned on the growth, is controlled by a theorem attributed to private communication with Kurtz in \cite{kesten2008problem}. As far as we know, this paper is the first to put the proof in writing. Our proof may also be adapted to the version used in \cite{kesten2008problem}, to study MDLA. 

Recently, there have been some advancements in the understanding of multi-particle aggregation processes. The Cluster-Cluster model \cite{meakin1984diffusion} depicts diffusing clusters that aggregate upon impact. In 1d exact growth bound for cluster size is given in \cite{berger2025one}. Much like the engulfing pool model, in the Cluster-Cluster model there is an issue of explosion. This paper might present some insights for this problem for $d>1$. For off-lattice models, exact fractal dimension, growth rate and fluctuations are proven for the Stationary Hastings-Levitov$(0)$ model \cite{berger2022growth,berger2025logarithmic,chen2025one}. However, this model better resembles Stationary DLA rather than MDLA. Lastly we would like to highlight a related model called Finitary Random Interlacements, where there is a Poisson point process of finite random walk paths. In this model the condensation condition relates the density of the walks and their lengths \cite{bowen2019finitary, procaccia2019percolation, cai2021non}.

\subsection{Preliminaries}
%We define two models called additive pool model, denoted by $\CE_t\in\BR^+$, and annihilating pool model, denoted by $\CA_t\in\BR^+$, indicating the radii of the pools. We construct both along side the natural coupling between them. Let $A$ be a Poisson point process in $\BR^2$ of intensity $\lambda>0$. We start both processes with a pool of radius $\frac{1}{\sqrt{\pi}}$, $\CE_0=\CA_0=B_0(\frac{1}{\sqrt{\pi}})$. Next, on each point of $A$, place a Brownian particle, which holds labels $a_t,b_t\in\{0,1\}^{\BR^+}$. At each time if the a particle 
For a subset $A\subset \BR^2$, denote by $\text{Vol}(A)$ the area of $A$. Denote $B(r)=\{x\in\BR^2:|x|\le r\}$. 
Next, we formally define the model: Let $N(t)$ be a Poisson process of intensity $1$ in $\BR^2$. We call a process $\br_t$ continuous time and space random walk if $\br_t=\sum_{i=1}^{N(t)}\nu_i$, where $\nu_i\sim \text{Normal}\left(0, \left[\begin{array}{cc}
    1 & 0 \\
    0 & 1
\end{array}\right]\right)$. 
% We define a model called annihilating pool model, denoted by $\CA_t\in\BR^+$, indicating the radius of the pool as follows:  Let $A=\{X^i\}_{i=1}^\infty$ be a Poisson point process in $\BR^2$ of intensity $\lambda>0$. For each point $X^i$ of $A$, let $\br^i_t, \ t\ge 0$ be an independent continuous in time and space random walks starting from $X^i$. Let $A_t=\{\br^i_t\}_{i=1}^\infty$ be the configuration of the Poisson cloud at time $t$. For all $i$ and $t$ we associate a label $b_{i,t}\in \{0,1\}$ with the particle. We start the processes with a pool of radius $\frac{1}{\sqrt{\pi}}$, and set $b_{i,0}=\mathbbm{1}_{|X^i|>\CA_0}$. Now define stopping time 
% $$
% \tau_1=\inf\{t>0, |\br^i_t|\le \CA_0, \ b(i,t)=1\}
% .$$
% Then we set $\CA_t=\CA_0, \ b_{i,t}=b_{i,0}, \ \forall t\in [0,\tau_1)$, and let 
% $$
% \CA_{\tau_1}=\sqrt{\CA_0^2+\frac{1}{\pi}}
% .$$
% In other words, we add droplet of volume one to the pool and the radius changes accordingly. At the same time, the labels are updated so that particles which are now inside the pool are annihilated: $b_{i,\tau_1}=\mathbbm{1}_{|X^i|>\CA_{\tau_1}}$. Then recursively, we can define 
% \begin{equation}\label{eq:hittimesann}
% \tau_k=\inf\{t>\tau_{k-1}, |\br^i_t|\le \CA_{\tau_{k-1}}, \ b(i,t)=1\}
% \end{equation}
% and have $\CA_{\tau_k}=\sqrt{ \CA_{\tau_{k-1}}^2+\frac{1}{\pi}}$ together with $b_{i,\tau_k}=\mathbbm{1}_{|X^i|>\CA_{\tau_k}}$. 

% %%%%%%%%%%%%%%%%%%%%%%%%%%%%%%%%%%%%%%%%%%%
% %In the interest of comapring to discret models such as MDLA, we define the strongly annihilating model $\CS_t$.
% %%%%%%%%%%%%%%%%%%%%%%%%%%%%%%%

We define the pool model denoted by $\CE_t\in\BR^+$, indicating the radius of the pool as follows:  Let $A_0=\{X^i\}_{i=1}^\infty$ be a Poisson point process in $\BR^2$ of intensity $\lambda>0$. For each point $X^i$ of $A_0$, let $\br^i_t, \ t\ge 0$ be an independent continuous in time and space random walks starting from $X^i$. Let $A_t=\{\br^i_t\}_{i=1}^\infty$ be the configuration of the Poisson cloud at time $t$. For all $i$ and $t$ we associate a label $a(i,t)\in \{0,1\}$ with the particle. %We start the processes with a pool of radius $\frac{1}{\sqrt{\pi}}$,

In order to define the initial pool at time zero, consider the following procedure:
\begin{itemize}
\item Starting from radius $r_0=\frac{1}{\sqrt{\pi}}$, let $\xi_1=|A_0\cap B(r_0)|$.
\item Denote $r_1=\sqrt{\frac{1+\xi_1}{\pi}}$, and $\xi_2=|A_0\cap B(r_1)\setminus B(r_0)|$.
\item Repeat this process recursively, until recursion $N=\inf\{j: \xi_j=0\}$. If $N=\infty$, then we let $\CE_t\equiv \infty, \ \forall t\ge 0$, and say the process explodes at $t=0$. Otherwise, We let $\CE_0=r_{N}$ and set label
$$
a(i,0)=\mathbbm{1}_{\cb{|X^i|>\CE_0}}
.$$
\end{itemize}
Let $\hat \tau_0=0$, and for $k\ge 1$ when the model has not exploded,  define 
\bae\label{eq:engulfing_times}
\hat\tau_k=\inf\{t>\hat \tau_{k-1},\exists i\in A_0, |\br^i_t|\le \CE_{\hat \tau_{k-1}}, \ a(i,t)=1\}
,\eae
the first time after $\hat \tau_{k-1}$ such that an active particle runs into the pool, triggering the next round of growths. Then similarly we have 
\begin{itemize}
\item Starting from radius $r_0=\sqrt{\CE_{\hat \tau_{k-1}}^2+\frac{1}{\pi}}$, and $r_{-1}=\CE_{\hat \tau_{k-1}}$. Define $\xi_1=|A_{\hat \tau_k}\cap B(r_0)\setminus \bar B_0(r_{-1})|$;
\item Denote $r_1=\sqrt{r_0^2+\frac{\xi_1}{\pi}}$, and $\xi_2=|A_{\hat \tau_k}\cap B(r_1)\setminus B(r_0)|$.
\item Repeat this process recursively, until recursion $N(0)=\inf\{j: \xi_j=0\}$. If $N=\infty$, then we let $\CE_t\equiv \infty, \ \forall t\ge \hat \tau_k$, and say the process explodes at $t=\hat \tau_k$. Otherwise, We let $\CE_{\hat \tau_k}=r_{N}$ and set label
$$
a(i,\hat \tau_k)=\mathbbm{1}_{\cb{|\br^i_{\hat \tau_k}|>\CE_{\hat \tau_k}}}
.$$
\end{itemize}
Note that the pool model may explode in finite time, i.e. $\CE_t=\infty$ for some $t<\infty$. Denote by $T_{\mathfs{E}}$ the time of explosion: 
$$
 T_{\mathfs{E}}=\inf\{t: \ \CE_t=\infty\}
$$
%%%%%%%%%%%%%%%%%%%%%%%%%%%%%%%%%%%%%%%%%%
\subsection{Results}
First we state a version of a theorem that Kesten and Sidoravicius \cite{kesten2008problem} attribute to private communication with Kurtz, in the context of Multi-particle DLA. However, to our knowledge, no version of this proof appears in the literature. 
\begin{theorem}\label{thm:kurtz}
For every $t>0$, the random collection of points $$\mathfrak{K}_t=\{x\in A_t: |x|>\CE_t, a(x,t)=1\},$$ conditional on $\sigma\{\CE_s:s\le t\}$ is an independent Poisson point process, with intensity measure $\lambda \prob_x(|\br_s^x|>\CE_{t-s}, \forall s\le t)$.% i.e. the dependence to the aggregate is only through the intensity measure.
\end{theorem}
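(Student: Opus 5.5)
The plan is to lift everything to path space and use the independence of a Poisson process on disjoint sets. By the marking theorem, the collection $\{(X^i,(\br^i_s)_{s\ge0})\}$ is a Poisson point process $\Pi$ on the space $\mathcal W$ of c\`adl\`ag paths in $\BR^2$, with intensity $\lambda\int \prob_x(\br^x\in\cdot)\,dx$. For a deterministic nondecreasing right-continuous radius function $f:[0,t]\to[1/\sqrt\pi,\infty)$, define two sets of paths:
\[
\mathcal W^{\rm av}_f=\{w:|w(s)|>f(s)\ \forall s\le t\},\qquad \mathcal W^{\rm hit}_f=\mathcal W\setminus \mathcal W^{\rm av}_f .
\]
These sets are disjoint, so $\Pi|_{\mathcal W^{\rm av}_f}$ and $\Pi|_{\mathcal W^{\rm hit}_f}$ are independent Poisson processes.

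The first key step is a deterministic \emph{locality} statement. The event $\{\CE_s=f(s),\ \forall s\le t\}$ depends only on $\Pi|_{\mathcal W^{\rm hit}_f}$, and on it we have $\mathfrak K_t=\{w(t):w\in\Pi|_{\mathcal W^{\rm av}_f}\}$. To prove this, I would run the construction of $\CE_0$ and of the successive $\hat\tau_k$ and cascades, and check by induction over $k$ the following three facts.
\begin{itemize}
\item A path that avoids the space-time region $\{(s,y):|y|\le f(s)\}$ is never counted in any $\xi_j$, because every annulus used up to time $t$ lies inside $B(f(s))$.
\item Such a path never triggers a $\hat\tau_k$.
\item Such a path keeps its label $a=1$.
\end{itemize}
Conversely, every path in $\mathcal W^{\rm hit}_f$ is deactivated by time $t$, provided the pool really follows $f$. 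Hence adding or removing avoiding paths does not change the dynamics up to $t$.

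Given locality, conditioning on a fixed $f$ gives the result at once. By independence, $\mathfrak K_t$ is the image under $w\mapsto w(t)$ of a Poisson process with intensity $\lambda\int\prob_x(\br^x\in\mathcal W^{\rm av}_f,\ \br^x_t\in\cdot)\,dx$. The mapping theorem makes this a Poisson process whose density at $y$ is $\lambda\int \prob_x(\text{avoid},\ \br_t\in dy)\,dx/dy$. Since the jump law $\mathrm{Normal}(0,I)$ is symmetric, the walk is reversible with respect to Lebesgue measure, so reversing time along the bridge turns this density into $\lambda\prob_y(|\br^y_s|>f(t-s),\ \forall s\le t)$, which is the claimed formula. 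The strict versus non-strict inequality at jump instants should not matter, because paths touch the boundary at exactly such times with probability zero.

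The main obstacle is that $\{\CE=f\}$ has probability zero, so the conditioning has to be made rigorous. I would exploit the discreteness of the radius. $\CE$ only takes values $\sqrt{n/\pi}$, so its history on $[0,t]$ is determined by finitely many data: the jump count $K$, the times $\hat\tau_1<\dots<\hat\tau_K$, and the integer radii. I would condition on $K=k$, on the radii, and on $\hat\tau_j\in[s_j,s_j+\delta)$. I would then sandwich the event between events of the form ``$\Pi|_{\mathcal W^{\rm hit}_{f^\pm}}\in\cdot$'' for the extremal step functions $f^\pm$ compatible with these intervals. With the locality lemma and independence, this bounds the conditional Laplace functional $\mathbb E[e^{-\sum_{x\in\mathfrak K_t}g(x)}\mid\cdot]$ between the Poisson Laplace functionals for $f^-$ and $f^+$.

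Letting $\delta\to0$ finishes the argument. Monotone continuity of $\prob_y(\text{avoid } f)$ in $f$, away from the null set of paths at the boundary at jump instants, makes the two bounds agree. A martingale or differentiation argument then identifies the regular conditional distribution given $\sigma\{\CE_s:s\le t\}$. Explosion before $t$ is handled separately, since there $\mathfrak K_t$ is empty by convention.
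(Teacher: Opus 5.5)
Your route differs from the paper's. The paper never conditions on a null event: it cuts $[0,t]$ into $2^k$ dyadic steps and applies the marking theorem successively to the discrete hitting sets $\mathfrak I_j$, so it only ever conditions on integer counts. It then argues that for fine meshes the discrete scheme reproduces the instantaneous cascades. You instead use a deterministic locality lemma on path space together with independence of $\Pi$ on the disjoint sets $\mathcal W^{\mathrm{av}}_f,\mathcal W^{\mathrm{hit}}_f$. The locality lemma is correct, reading the model so that a label, once $0$, stays $0$. Your reversibility step is a genuine improvement: it is exactly what turns the forward avoidance probability into the stated intensity $\lambda\prob_y(|\br^y_s|>\CE_{t-s},\ \forall s\le t)$, which the paper only asserts.

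\textbf{The gap.} It lies in the step you flag as the main obstacle. Fix one of your cells $G$, write $\Pi^{f}=\{w(t): w\in\Pi\cap\mathcal W^{\mathrm{av}}_f\}$, $\langle g,\eta\rangle=\sum_{y\in\eta}g(y)$ and $L_f(g)=\mathbb E[e^{-\langle g,\Pi^f\rangle}]$.
\begin{itemize}
\item The upper bound works. Locality gives $G\in\sigma(\Pi|_{\mathcal W^{\mathrm{hit}}_{f^+}})$ and $\mathfrak K_t\supseteq\Pi^{f^+}$ on $G$, hence $\mathbb E[e^{-\langle g,\mathfrak K_t\rangle};G]\le L_{f^+}(g)\prob(G)$.
\item The lower bound does not follow the same way. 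On $G$ you only know $\mathfrak K_t\subseteq\Pi^{f^-}$, and $G$ is \emph{not} independent of $\Pi|_{\mathcal W^{\mathrm{av}}_{f^-}}$.
\end{itemize}
The layer $\mathcal W^{\mathrm{av}}_{f^-}\setminus\mathcal W^{\mathrm{av}}_{f^+}$ consists of paths that visit an annulus $B(\rho_j)\setminus B(\rho_{j-1})$ during the window containing $\hat\tau_j$. Two examples: a particle counted in the cascade at $\hat\tau_j$ (so it helps fix $\rho_j$) that jumps out before the window closes; or a particle entering the annulus after $\hat\tau_j$, which triggers an extra jump and destroys $G$. Such paths decide whether $G$ occurs.

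Since $\prob(G)$ is of order $\delta^k$, the unconditional $O(\delta)$ mass of this layer is useless. You need a bound relative to $\prob(G)$, and independence does not supply one. Without it, letting $\delta\to0$ yields only $\mathbb E[e^{-\langle g,\mathfrak K_t\rangle}\mid \CE_s,\,s\le t]\le L_{\CE}(g)$, which does not identify the law.

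\textbf{The missing ingredient.} Combine the Mecke formula with the other half of your locality lemma: adding or removing a single $\CE$-avoiding path leaves $\CE$ unchanged. With $\nu=\int\prob_x(\br^x\in\cdot)\,dx$ the path intensity, this gives
\[
\mathbb E\Big[\#\{w\in\Pi:\ w\in\mathcal W^{\mathrm{av}}_{\CE}\setminus\mathcal W^{\mathrm{av}}_{f^+}\};\,G\Big]\le\lambda\int\prob\big(\Pi\in G,\ w\in\mathcal W^{\mathrm{av}}_{\CE(\Pi)}\setminus\mathcal W^{\mathrm{av}}_{f^+}\big)\,\nu(dw)\le\prob(G)\,\lambda\,\nu\big(\mathcal W^{\mathrm{av}}_{f^-}\setminus\mathcal W^{\mathrm{av}}_{f^+}\big).
\]
The last factor is $O(\delta)$, because such a path must visit an annulus during a window and jump within it. Using $e^{-a-b}\ge e^{-a}-\mathbbm{1}_{\{b>0\}}$ and the independence of $G$ from $\Pi|_{\mathcal W^{\mathrm{av}}_{f^+}}$, you get $\mathbb E[e^{-\langle g,\mathfrak K_t\rangle}\mid G]\ge L_{f^+}(g)-O(\delta)$, which closes the sandwich. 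In fact, a conditional Mecke identity derived this way could replace the whole cell-and-limit argument.
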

Next, we present an exact value for the critical density of instantaneous explosion. Moreover, we show that at the critical value $\lambda=1$, no explosion occurs, and in fact, in Theorem \ref{thm:growth}, we present a sublinear growth bound at criticality, and a diffusive growth bound at sub-criticality.   
\begin{theorem}[Explosion condition for engulfing pool model]\label{thm:explode}

\

\begin{enumerate}
\item For $\lambda>1$, $\{\CE_t\}_{t\ge0}$ explodes a.s. at a finite time.\\
\item For $\lambda=1$, $\{\CE_t\}_{t\ge0}$ does not explode a.s.
\end{enumerate}
\end{theorem}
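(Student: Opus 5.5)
Both parts rest on one observation: the growth of the pool is a branching-type exploration, and $\lambda=1$ is exactly where the mean number of new particles swept per unit of area added equals one. When the radius grows from $r$ to $r'=\sqrt{r^2+m/\pi}$, the annulus $B(r')\setminus B(r)$ has area $m$. If the particles there are (close to) a Poisson field of intensity $\lambda$, they contribute $\mathrm{Pois}(\lambda m)$ new particles. So the recursion $\xi_1,\xi_2,\dots$ in the definition of $\CE_0$ and $\CE_{\hat\tau_k}$ is a Galton--Watson process with $\mathrm{Pois}(\lambda)$ offspring. Here each particle is a unit of mass and its children are the particles in the area it uncovers, and by the independence of Poisson counts on disjoint sets the recursion is exactly of this form. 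Consequently $N=\infty$ is precisely survival of this Galton--Watson tree.

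\textbf{Part (1), $\lambda>1$.} At time $0$ the field is exactly Poisson$(\lambda)$. The initial cascade starts from area $1$, i.e. from $\mathrm{Pois}(\lambda)$ first-generation particles, and survives with probability $q(\lambda)>0$. So explosion at time $0$ already has positive probability. To get explosion a.s. in finite time, I would use Theorem \ref{thm:kurtz} at each engulfing time $\hat\tau_k$.

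Conditional on the history, the active particles outside the pool form a Poisson process with intensity $\lambda\prob_x(|\br_s^x|>\CE_{t-s},\ \forall s\le t)$. For $x$ far from the pool this intensity is close to $\lambda$, but near the boundary it is depleted. The main obstacle is to show that depletion does not kill the cascade. My plan is the following.

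1. Each engulfing event adds one unit of mass, so each $\hat\tau_k$ gives a fresh trial, with the cascade started at radius $r_0=\sqrt{\CE^2+1/\pi}$.
2. Bound the depleted intensity below by $\lambda'\in(1,\lambda)$ outside a shell of width $h$ around the current pool, for some fixed $h$. The probability that a walk from distance $\ge h$ reaches a disk within elapsed time $t$ is bounded, and the point is to make it small uniformly.
3. Conditioned on the cascade crossing the depleted shell, which costs only a bounded amount of area $\approx 2\pi\CE h$, the supercritical $\mathrm{Pois}(\lambda')$ cascade survives with probability bounded below.
4. Hence every engulfing time has a uniformly positive chance $p$ of triggering explosion.

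The difficulty in step 3 is that a shell of area $\approx 2\pi\CE h$ grows with $\CE$. I would instead choose $h\sim c/\CE$, and control the hitting probabilities by the fact that $\CE$ is nondecreasing: depletion within distance $h$ is at most the probability of hitting $B(\CE_t)$ in time $t$, which is small except very near the boundary. Once explosion has probability at least $p$ at each of infinitely many engulfing times (engulfing occurs a.s. since the field is infinite and recurrent in 2D), the conditional Borel--Cantelli lemma gives explosion a.s. Finiteness in time follows because the $\hat\tau_k$ can be shown not to accumulate at infinity before explosion; the times between engulfings have, say, uniformly bounded means given large density near the pool.

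\textbf{Part (2), $\lambda=1$.} By Theorem \ref{thm:kurtz}, at each $\hat\tau_k$ the conditional field outside the pool is Poisson with intensity at most $1$, since a probability is at most $1$. The particles swept in the cascade are therefore stochastically dominated by a critical $\mathrm{Pois}(1)$ Galton--Watson process started from one unit (the absorbed particle, plus at time $0$ the initial area $1$). Such a process is finite a.s., so $N<\infty$ a.s. at every step.

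It remains to exclude accumulation of the $\hat\tau_k$ at a finite time. I would show that $\CE_t<\infty$ a.s. for each fixed $t$ by a first-moment bound. The mass at time $t$ is at most the number of particles that ever come within reach, and for each $t$ this is dominated by a critical cascade seeded by the particles hitting a ball in time $t$. This domination could go through a growth bound of the kind in Theorem \ref{thm:growth}, but proved directly: the expected number of seeds is finite, and each seed spawns an a.s. finite critical tree. So $\CE_t$ is finite, and since $\CE$ is monotone there is no explosion a.s.
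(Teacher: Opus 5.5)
\textbf{Part (1).} The Galton--Watson identification and the positive probability of explosion at time $0$ are fine. Steps 2--4, however, do not go through, so you never obtain the uniform $p$ that the conditional Borel--Cantelli argument needs.

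At an engulfing time $t=\hat\tau_k$ the conditional intensity is $\lambda\prob_x(|\br_s|>\CE_{t-s},\ \forall s\le t)$. You bound the depletion by the probability of hitting $B(\CE_t)$ within time $t$. But take a pool of radius at least of order $\sqrt t$ that has not grown much. For such a pool, that hitting probability is close to $1$ at every distance $d\ll\sqrt t$. It is not small outside a shell of fixed width, let alone width $c/\CE$. In such histories the intensity at bounded distance from the pool is $O(t^{-1/2})$, and the depleted layer has width of order $\sqrt t$.

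The cascade triggered at $\hat\tau_k$ starts exactly in this layer, since its first generation is the unit-area annulus next to the pool. It must then cross an area of order $\CE\sqrt t$ of subcritical density. So its survival probability has no lower bound uniform in the history.

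The missing idea is the paper's way around this. It abandons engulfing times and uses the history-dependent deterministic times $\tau_{n+1}=\tau_n+R_{n+1}^3+M$, together with the monotone comparison of Lemma \ref{lem_retain}. Freeze the mass of all particles active at $\tau_n$, let them move freely (even through the pool) until $\tau_{n+1}$, and then run a single cascade from radius $\CE_{\tau_n}$. This can only underestimate $\CE_{\tau_{n+1}}$. By Lemmas \ref{lem_refill} and \ref{lem_int_far}, the frozen cloud has refilled at time $\tau_{n+1}$ to intensity at least $\lambda(1-\delta)^2>1$ everywhere, including inside the old pool. Lemma \ref{lem_exp} then gives a uniform $c$, and $\prob(\CE_{\tau_n}<\infty)\le(1-c)^{n-1}$.

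\textbf{Part (2).} Dominating each cascade by a critical $\mathrm{Pois}(1)$ tree matches the paper, but your non-accumulation step is not an argument. The seeds are the particles that reach the \emph{current} pool, whose size is produced by the earlier trees. So ``finitely many seeds, each spawning a finite tree'' is circular.

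The first-moment version is also false. The critical total progeny satisfies $\prob(X\ge n)\asymp n^{-1/2}$, hence $\prob(\CE_0\ge m)\gtrsim 1/m$ and $\ev[\CE_0]=\infty$. Given $\CE_0=R$, the expected number of particles entering $B(R)$ during $[0,1]$ is of order $R$. So the expected number of seeds is already infinite. If you instead use seeds of a fixed ball, the domination breaks down as soon as the pool outgrows that ball.

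What must be controlled is a borderline competition between two rates:
\begin{itemize}
\item At radius $R$, arrivals occur at rate at most $CR$, uniformly in time. This is Lemma \ref{lem:inf_rate}, which uses Theorem \ref{thm:kurtz} to see that conditioning on no arrival only thins the field.
\item After $n$ arrivals the radius is of order $\sqrt{S_n}$, where $S_n/n^2$ converges to a stable-$\frac12$ law.
\end{itemize}
So the $n$-th arrival time is heuristically of order $\sum_{k\le n}k^{-1}\asymp\log n$, which diverges only logarithmically. The paper makes this precise with three ingredients: a dominating process built from i.i.d.\ progenies and waiting times $T_k/(CR_k)$, Lemma \ref{lem:equi}, and the a.s.\ divergence of $\sum_n S_n^{-1/2}$. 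Your proposal contains none of this quantitative input.
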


\begin{theorem}[Growth rate]\label{thm:growth}

\

\begin{enumerate}

\item { For $\lambda<1$, we have asymptotically a.s. $\sqrt{t}\log^{-\frac{1+\epsilon}{2}}(t)\le \CE_t\le \sqrt{t}\log(t)$ for all $\epsilon>0$. }
\item For $\lambda=1$, $\forall\zeta<1$, {$\limsup_{t\to\infty}\frac{\CE_t}{t^\zeta}=\infty$ a.s.}
\end{enumerate}
\end{theorem}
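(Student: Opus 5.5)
Both parts of Theorem \ref{thm:growth} will be read off from the conditional Poisson description of Theorem \ref{thm:kurtz}, tracking how fast the pool's area $\pi\CE_t^2$ grows. Mass conservation means the area at time $t$ equals $1$ plus the number of particles absorbed by time $t$. That number is $\lambda\int(1-\prob_x(|\br^x_s|>\CE_{t-s},\forall s\le t))\,dx$ minus fluctuations. By Theorem \ref{thm:kurtz}, the conditional law of the particles still outside the pool is known given the growth path. So the absorbed mass is the complement: the $\lambda$-mass of starting points whose walk has hit the moving pool, plus the Poisson excess initially inside it.

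\textbf{Case $\lambda<1$, upper bound.} Compare with the static picture. If $\CE_s\le R$ for all $s\le t$, the absorbed mass is at most $\lambda\,\mathbb{E}|\{x:\br^x \text{ hits } B(R) \text{ by time } t\}|$. In two dimensions this is about $\pi R^2+c\,t/\log(t/R^2)$ (the Wiener sausage of a disk). The first term is multiplied by $\lambda<1$, so it cannot self-sustain, and the area is forced to satisfy $\pi\CE_t^2\lesssim \lambda\pi\CE_t^2+Ct$, giving $\CE_t^2\le Ct/(1-\lambda)$ in expectation. To make this almost sure with the stated $\sqrt t\log t$ envelope:
\begin{itemize}
\item Define the stopping time $\sigma_R$ at which $\CE$ first exceeds $R$.
\item Conditional on the past, use Theorem \ref{thm:kurtz}: the remaining particles form a Poisson process whose mass within distance $\sqrt{t}\log t$ has Poisson concentration.
\item Apply a union bound over dyadic $t$ and Borel--Cantelli, with $R=\sqrt t\log t$.
\end{itemize}

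\textbf{Case $\lambda<1$, lower bound.} Ignore the growth. The pool contains $B(r_0)$ forever, so the area is at least the number of particles whose walks hit $B(1/\sqrt\pi)$ by time $t$. Its mean is of order $t/\log t$, and Poisson concentration gives $\CE_t\ge c\sqrt{t/\log t}$ with high probability. The $\log^{-(1+\epsilon)/2}$ loss absorbs the Borel--Cantelli union over dyadic times.

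\textbf{Case $\lambda=1$.} Argue by contradiction. Suppose $\CE_t\le Kt^\zeta$ for all large $t$ on an event of positive probability. Since $\zeta<1$, the scale $\CE_t$ grows slower than the ballistic scale, and I would split by the size of $\zeta$.
\begin{itemize}
\item For $\zeta\le 1/2$ the part-(1) lower bound mechanism already gives more mass, since the boundary-flux term $t/\log t$ dominates.
\item For $\zeta>1/2$, the walk diffuses on scale $\sqrt t\ll \CE_t$. The engulfed mass is then about $\pi\CE_t^2$ (the full swept area, at density $\lambda=1$) plus a positive boundary-layer flux of order $\CE_t\sqrt t$. Particles within distance $\sqrt t$ outside the pool hit it with probability bounded below; by curvature this even slightly exceeds the flat-boundary rate.
\end{itemize}
So area $\ge \pi\CE_t^2+c\CE_t\sqrt{t}$, and self-consistency forces $\frac{d}{dt}\CE_t^2\gtrsim \CE_t/\sqrt t$, i.e.\ $\CE_t\gtrsim t^{1/2}$ bootstrapped. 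Iterating the excess, each round adds a factor, so the growth eventually surpasses $t^\zeta$, contradicting the assumption. Fluctuations of the Poisson count in the swept annulus are of order $\CE_t$, which is negligible against $\CE_t\sqrt t$.

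\textbf{Main obstacle.} The hardest step is the $\lambda=1$ bootstrap. There the leading terms cancel exactly, and the drift comes only from the boundary layer. The boundary layer must be controlled uniformly over the moving boundary, using the survival probabilities $\prob_x(|\br_s^x|>\CE_{t-s}\ \forall s)$ from Theorem \ref{thm:kurtz}. I would first try a monotone coupling in which the pool is replaced by a slower deterministic radius $r(t)=Kt^\zeta$. Since a smaller pool kills fewer particles, the conditional intensity outside is only larger. A direct flux computation for this deterministic radius should then show that the absorbed mass exceeds $\pi r(t)^2$ for large $t$, contradicting $\CE_t\le r(t)$.
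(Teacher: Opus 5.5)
Part (1) of your plan is essentially the paper's argument. The lower bound counts walks that ever hit $B(r_0)$, a Poisson variable with mean of order $t/\log t$. The upper bound compares the absorbed mass with $|A_0\cap B(r)|+V(r,t)$, where $V(r,t)$ counts walks from outside $B(r)$ that enter it by time $t$. Two small remarks:
\begin{itemize}
\item For $R\gtrsim\sqrt t$, the expected number of walks meeting $B(R)$ by time $t$ is $\pi R^2+O(R\sqrt t+t)$, not $\pi R^2+ct/\log(t/R^2)$. Your conclusion survives this.
\item You do not need $\sigma_R$ or Theorem \ref{thm:kurtz} here. The inequality $\pi\CE_t^2\le 1+|A_0\cap B(\CE_t)|+V(\CE_t,t)$ holds deterministically, and $V(r,t)$ is Poisson, so Poisson tails plus a union bound over radii close the argument.
\end{itemize}
The genuine gap is in part (2).

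Your $\lambda=1$ argument has the right skeleton. At density $1$ the swept mass cancels the area, so any provable depletion of free particles next to $\partial B(\CE_t)$ beyond the fluctuation scale gives a contradiction. But the depletion is never established, and the tool you propose for it points the wrong way.
\begin{itemize}
\item \textbf{The coupling gives the wrong inequality.} On $\{\CE_s\le r(s)=Ks^\zeta\ \forall s\le t\}$, every particle absorbed by the true pool would also be absorbed by a pool of radius $r(\cdot)$. So a flux computation for $r$ gives an \emph{upper} bound on the true absorbed mass. Equivalently, as you note, it only shows that the true free intensity $\prob_x(|\br_s|>\CE_{t-s},\ \forall s\le t)$ is \emph{larger}. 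A contradiction needs a \emph{lower} bound on the depletion around the true boundary.
\item \textbf{At a generic time there need be no depletion.} The pool may have just jumped outward in a cascade, leaving its new boundary in undepleted territory.
\item \textbf{The boundary-layer claim fails for a moving boundary.} Walks within $\sqrt t$ of the pool need not hit it with probability bounded below. If $\CE_s\approx s^\zeta$ with $\zeta>1/2$, the pool is within $\sqrt t$ of its time-$t$ radius only during the last $O(t^{3/2-\zeta})\ll t$ time units, in which a walk moves only $O(t^{3/4-\zeta/2})\ll\sqrt t$.
\item \textbf{Escapees are ignored.} Particles starting in $B(\CE_t)$ can escape before being swept, and they offset the inflow.
\item \textbf{The bootstrap cannot reach $t^\zeta$, even granting your flux estimate.} $\frac{d}{dt}\CE_t^2\gtrsim\CE_t/\sqrt t$ integrates only to $\CE_t\gtrsim\sqrt t$. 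Likewise $\CE_t^2\ge\ell_t^2+c\,\ell_t\sqrt t$ adds only $O(\sqrt t)$ to the radius per round, which is negligible once $\CE_t\gg\sqrt t$.
\end{itemize}

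The missing idea is a way to certify, at stopping times, that the \emph{random} pool has sat near its current radius long enough to deplete a thick layer. The paper does this in three steps.
\begin{itemize}
\item Proposition \ref{prop_slow} is a purely deterministic fact: if $t^{1/3}\ll\CE_t\le t^{1-\alpha}$, there are arbitrarily large $t_1$ with $\CE_{t_1-t_1^\beta}\ge\CE_{t_1}-2$. These stalls define stopping times $\tilde\tau_k$.
\item At each $\tilde\tau_k$, Theorem \ref{thm:kurtz} bounds the conditional intensity by $1-c_0$ on $B(\tilde R_k+\tilde R_k^{\beta/2})\setminus B(\tilde R_k)$. So with overwhelming probability at most $3\pi\tilde R_k^2-\tilde R_k^{1+\beta/4}$ free particles lie in $B(2\tilde R_k)\setminus B(\tilde R_k)$.
\item The label-blind bookkeeping $\tilde N_k\le\pi\tilde R_k^2+\tilde M_k$ then puts the total number of particles in $B(2\tilde R_k)$ at time $\tilde\tau_k$ below $4\pi\tilde R_k^2-\tilde R_k^{1+\beta/4}$.
\end{itemize}
This contradicts Proposition \ref{prop_abnormal}, which bounds $|A_t\cap B(R)|$ from below \emph{uniformly} over $t\in[0,R^{10}]$. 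Uniformity is essential because $\tilde\tau_k$ is random and depends on the field. A fixed-time Poisson fluctuation bound, such as the one you invoke for the swept annulus, does not suffice.
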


\subsection{Open Problems}
\begin{enumerate}
    \item \label{open:critical} Theorem \ref{thm:growth} asserts that the Pool model at criticality grows faster than any sublinear power. The upper bound obtained in this paper (Corollary \ref{col_iterated_exp}) is very far from the lower bound. Based on computer simulations ({See Figure \ref{fig.cri}}),{ it seems that between big ``jumps" which correspond to an extraordinarily large Galton-Watson engulfing, possibly due to the local fluctuation of intensity, pool model $\CE_t$ at criticality seems to have a linear growth until it hits boundary of the box in our simulations. We anticipate that such big ``jumps"  will become increasingly rare as the aggregation grows.}  So we conjecture that 
    \begin{conjecture}
        At $\lambda=1$, there exists some $\xi>0$ such that $\lim_{t\to\infty}\frac{\CE_t}{t}= \xi$ a.s.
    \end{conjecture}
    \begin{figure}[H]
    \includegraphics[width=5.5in]{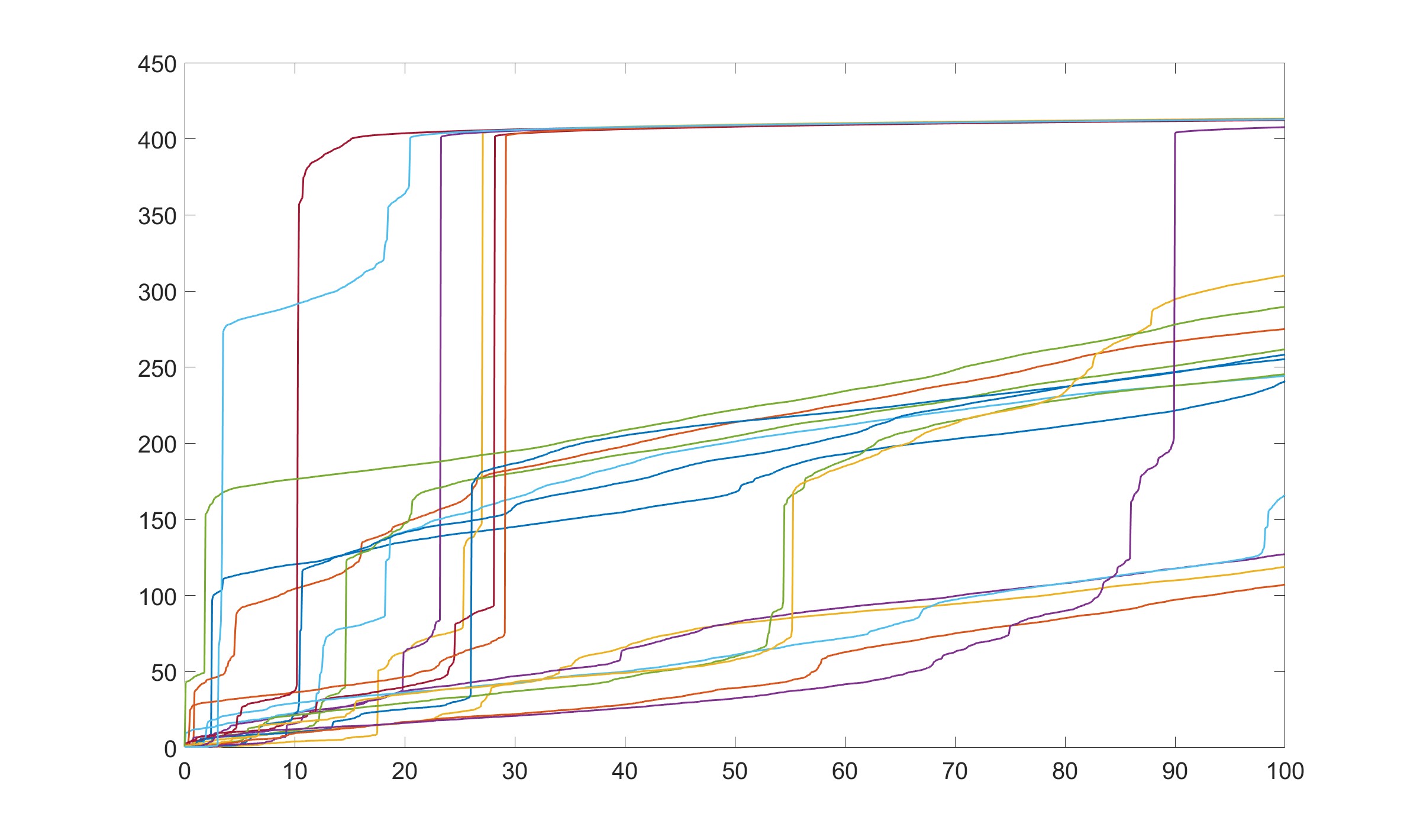}
    \caption{\label{fig.cri} Simulations on the growth of $\CE_t, t\in [0,100]$ in an approximated engulf pool model, where 1) free particles move in a finite box of size $800\times 800$ with periodic boundary conditions; and 2) engulfing is done at the end of each small deterministic time step $\delta_t=10^{-2}$. 20 random realizations are recorded.}
    \end{figure}

    \item \label{open:brownian} If one replaces in the model the continuous time and space random walks with Brownian motion, our proof of Theorem \ref{thm:explode} (2) fails. The reason is the speed up of Brownian motion in small time intervals, which causes problems whenever the particle density is high enough, since very close Brownian particles can enter the aggregate very fast. Thus we ask here: Does replacing continuous time and space random walk with Brownian motion lead to explosion at criticality $\lambda=1$?
    \begin{conjecture}
        The critical Brownian Pool model does not explode a.s.
    \end{conjecture}
    \item \label{open:annihilating} Much like the classical Multi-particle DLA, one can consider an annihilating version of the Pool model. The proof of Theorem \ref{thm:growth} (1) is very robust and would still hold. Thus we ask, what is the behaviour at criticality for the annihilating Pool model? We conjecture:
    \begin{conjecture}
        The annihilating Pool model at criticality grows at a linear speed.
    \end{conjecture}
    \item An engulfing version of MDLA can be considered by allowing excess particles at attached locations to preform a simple random walk inside the aggregate until hitting a vacant location on the boundary. This is consistent with some electrochemical deposition procedures \cite{paunovic2006fundamentals,vishnugopi2020surface}. Let the particle density be $\lambda$ and the internal random walk rates to be $\gamma$, with $\gamma=\infty$ corresponding to the limiting case where all excess particles are attached instantaneously.  
    \begin{conjecture}
        For $\gamma=\infty$, the engulfing MDLA if and only if $\lambda>1$. And for all large enough $\lambda$ and $\gamma>0$, the model grows linearly with its limiting shape converging to a ball as $\gamma\to\infty$.  
    \end{conjecture}%\note{I have revised the conjecture. Please take a look.--Yuan}
\end{enumerate}

\section{Kurtz's theorem}

In this section we prove Theorem \ref{thm:kurtz}. The proof follows by discretizing time and sequentially using the marking theorem \cite[Section 5.2]{kingman1992poisson}. The main difficulty of the proof is to encode the engulfing procedure in the discrete time intervals. 
\begin{proof}[Proof of Theorem \ref{thm:kurtz}]
We partition $[0,t]$ to $2^k$ intervals. Let 
\bae
&\mathfrak{I}_1=\left\{x\in A_0: \br^x_{[0,2^{-k}t]}\cap B(1/\sqrt{\pi})\neq\emptyset\right\}
%&\mathfrak{I}_1=\left\{x\in A_0: \br^x_{[0,2^{-k}t]}\cap B\left(\sqrt{\frac{1+\mathfrak{I}'_1}{\pi}}\right)\neq\emptyset\right\}
.\eae
For any $1<j<2^k$, %\note{Need to include the engulfing procedure. Idea 27.2.25: all the processes for different $k$s are coupled via the same $A_t$. After the first engulfing there is a non-infinitesimal gap between each arrival. As $k\to\infty$ get a.s. convergence of the processes in the lower time gaps after each real engulfing procedure}
\bae\label{eq:discretetimeprocess}
\mathfrak{I}_j=\left\{x\in A_0\setminus \bigcup_{i\le j-1}\mathfrak{I}_i : \br^x_{[(j-1)2^{-k}t,j2^{-k}t]}\cap B\left(\sqrt{\frac{1+\sum_{i=1}^{j-1}|\mathfrak{I}_i|}{\pi}}\right)\neq\emptyset \right\}
.\eae
By the Marking Theorem \cite[Section 5.2]{kingman1992poisson}, $\left\{\mathfrak{I}_j\right\}_{j=1}^{2^k}$ are conditionally independent Poisson Point Processes: Conditional on $\{\mathfrak{I}_i\}_{i=1}^{j-1}$,  $\mathfrak{I}_j$ is an independent PPP with intensity,
\bae\label{eq:conditional density}
\lambda \prob_x&\left( \br^x_{[(j-1)2^{-k}t,j2^{-k}t]}\cap B\left(\sqrt{\frac{1+\sum_{i=1}^{j-1}|\mathfrak{I}_i|}{\pi}}\right)\neq\emptyset , \right.\\
&\left.  \forall l<j, ~\br^x_{[(l-1)2^{-k}t,l2^{-k}t]}\cap B\left(\sqrt{\frac{1+\sum_{i=1}^{l-1}|\mathfrak{I}_i|}{\pi}}\right)=\emptyset  \right) 
.\eae
%For the case of the annihilating process $\CA_t$ \note{not annihilating}, since the probability that any Brownian motion hits the aggregate in the interval $[t-2^{-k},t]$ is summable as $k\to\infty$. By Borel-Cantelli, the process $\mathfrak{I}_{2^k}$ converges a.s, and in fact, there is a finite a.s. $K>0$ such that for all $k_1,k_2>K$, $\mathfrak{I}_{2^{k_1}}=\mathfrak{I}_{2^{k_2}}=\mathfrak{K}_t$. By construction, we obtain that the conditional distribution of $\mathfrak{K}_t$ is as stated. 

We need to take into account the engulfing procedure. The idea is to show that we can encode the instantaneous engulfing procedure in the first several iterations of the discrete time process \eqref{eq:discretetimeprocess}. Recall the notations of Section \ref{Sec:intro}, and in particular \eqref{eq:engulfing_times}, of the successive particle arrival times that induce the engulfing procedure $\hat\tau_k$. 

Assuming that $\CE_{0}<\infty$, $N<\infty$ (for $t=0$ engulfing procedure), and $\hat\tau_1>0$ is well defined. Take $k_0$, large enough such that
\begin{equation}\label{eq:k0cond}
   N 2^{-k_0}<\hat\tau_1. 
\end{equation} 
First we wish to take some $k_1>k_0$, such that all particles which are involved with the engulfing procedure, will make no movement in the time interval $[0,N2^{-k}t]$.
Define the event
\begin{equation}
\mathcal{S}_{k}:= 
\left\{
\forall x\in A_0 \cap B(\CE_0),\ 
\br^x_{[0,N2^{-k}t]}=x
\right\}.
\end{equation}
Conditional on $\CE_0$, there are exactly $\pi\CE_0^2$ particles in $B(\CE_0)$, so for $k$ large enough
\begin{equation}\label{eq:nomove}
    \prob(\mathcal{S}_{k}|\CE_0)=e^{-\pi\CE_0^2 N2^{-k}t}\ge 1-\frac{1}{2}\pi\CE_0^2 Nt2^{-k}.
\end{equation}
Thus, by Borel-Cantelli, conditional on $\CE_0$, there is a $k_1>k_0$ such that for any $k\ge k_1$, $\mathcal{S}_k$ occurs a.s. 

Next we want to make sure that particles outside $B(\CE_0)$ will not enter by time $N2^{-k}t$. Define the event
\[
\mathcal{G}_{k}:=
\left\{
\forall x\in A_0 \cap \bb{B(\CE_0)}^c,\ 
\br^x_{[0,N2^{-k}t]}\cap B(\CE_0)=\emptyset
\right\}.
\]
Take $\rho_k=2^{k/4}$, then by similar argument to \eqref{eq:nomove} we can find $k_2>k_1$ such that a.s. for all $k>k_2$, for all $x\in B(\CE_0+\rho_k)\setminus B(\CE_0)$, $\br^x_{[0,N2^{-k}t]}=x$. Moreover,
\begin{equation}
    \prob\left(\forall x\in A_0 \cap \bb{B(\CE_0+\rho_k)}^c,\ 
\br^x_{[0,N2^{-k}t]}\cap B(\CE_0)\neq\emptyset\right)\le c e^{-\rho_k}.
\end{equation}
Together we obtain that there is a $k_3>k_2$ such that a.s. for all $k>k_3$ both $\mathcal{S}_{k}$ and $\mathcal{G}_{k}$ occur. 

Now, for any $k>k_3$ we have by $\mathcal{S}_{k}$, that $A_0\cap B(r_0)\subset\mathfrak{I}_1$ and by $\mathcal{G}_{k}$ no other particles can enter, and thus $A_0\cap B(r_0)=\mathfrak{I}_1$. Similarly we obtain for any $k>k_3$ that for any $j\le N$, $A_0\cap B(r_j)\setminus B(r_{j-1})= \mathfrak{I}_j$, and $\sum_{i=1}^{N}|\mathfrak{I}_i|=\sum_{i=1}^N\xi_i$. Thus, if $t<\hat\tau_1$, there is some $l_0(k)$, $t\in[(l_0-1)2^{-k},l_02^{-k}]$. Then for some $k_4>k_3$ and any $k>k_4$ we have on an event of probability one, that  
\begin{equation}
   \cb{ \forall l<l_0(k), ~\br^x_{[(l-1)2^{-k}t,l2^{-k}t]}\cap B\left(\sqrt{\frac{1+\sum_{i=1}^{l-1}|\mathfrak{I}_i|}{\pi}}\right)=\emptyset}=\cb{|\br_s^x|>\CE_{t-s}, \forall s\le t}
,\end{equation}
then we are done. 

Otherwise, we obtain that for some $l_1(k)$, $\hat\tau_1\in[(l_1-1)2^{-k},l_12^{-k}]$, and assuming the engulfing procedure ends at a finite stage, for $k$ large enough $\hat\tau_2\notin[(l_1-1)2^{-k},(l_1+N)2^{-k}]$. By \eqref{eq:conditional density}, we obtain that conditional on $\{\mathfrak{I}_i\}_{i=1}^{l_1}$,  $\mathfrak{I}_{l_1+1}$ is an independent PPP with intensity,
\bae
\lambda \prob_x&\left( \br^x_{[l_1 2^{-k}t,(l_1+1)2^{-k}t]}\cap B\left(\sqrt{\frac{1+\sum_{i=1}^{l_1}|\mathfrak{I}_i|}{\pi}}\right)\neq\emptyset , \right.\\
&\left.  \forall l<l_1, ~\br^x_{[(l-1)2^{-k}t,l2^{-k}t]}\cap B\left(\sqrt{\frac{1+\sum_{i=1}^{l-1}|\mathfrak{I}_i|}{\pi}}\right)=\emptyset  \right) 
.\eae
By the strong Markov property of the random walk, we can now repeat the previous argument, by adjusting \eqref{eq:k0cond} to admit the number of engulfing steps in the time interval $\hat\tau_2-\hat\tau_1$. If all engulfing procedures end before time $t$, we are done. Otherwise we obtain that there are no active particles at time $t$, thus the statement follows trivially.  
\end{proof}
%%%%%%%%%%%%%%%%%%%%%%%%%%%%%%%%%%%%%%%%%%%%%%%%%%%%%%%%%%%%%%%%%%%%%%%%%%%%%%%%%%%%%%%%%%%%%%%%%%%%%%%%%%%%%%%%%%%%%%%%%%%%%%

The next corollary is a random stopping time version of Kurtz's theorem, whose proof is contained in the previous proof.
\begin{corollary}\label{cor:random_kurtz}
For every $k\in\BN$, the random collection of points $$\mathfrak{K}_{\hat\tau_k}=\{x\in A_{\hat\tau_k}: |x|>\CE_{\hat\tau_k}, a(x,{\hat\tau_k})=1\},$$ conditional on $\sigma\{\CE_s:s\le {\hat\tau_k}\}$ is an independent Poisson point process, with intensity measure $\lambda \prob_x(|\br_s|>\CE_{\tau_k-s}, \forall s\le {\hat\tau_k})$.
\end{corollary}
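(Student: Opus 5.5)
The plan is to rerun the discretization argument from the proof of Theorem \ref{thm:kurtz}, with the fixed horizon $t$ replaced by the random time $\hat\tau_k$. The way to do this is to build the random time into the filtration of the pool rather than prove a separate statement for each deterministic time.

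\textbf{Step 1: reduction to deterministic times.} Write $\mathcal{F}^{\CE}_s=\sigma\{\CE_u:u\le s\}$. Because $\CE$ is a pure-jump process with jumps at $\hat\tau_1<\hat\tau_2<\dots$, the time $\hat\tau_k$ is a stopping time for this filtration, and the event $\{\hat\tau_k\in ds\}$ is determined by the path of $\CE$.

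\textbf{Step 2: conditioning on the path up to the $k$-th jump.} Fix the jump times $s_1<\dots<s_k$ and the radii $\CE_{s_j}$. The discrete-time sets $\mathfrak{I}_j$ from the proof of Theorem \ref{thm:kurtz} are built by repeated thinning (Marking Theorem). Conditional on the sequence $\{\mathfrak{I}_i\}$, the particles never absorbed form a PPP whose intensity is $\lambda$ times the probability of avoiding all the discrete balls. Conditioning on the discrete history up to the dyadic interval $l_k(m)$ containing $\hat\tau_k$ is therefore legitimate: it is conditioning on finitely many counts, each a Poisson thinning.

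\textbf{Step 3: identifying the limit.} As in the proof, for $m$ large three things hold a.s. The engulfing cascade at $\hat\tau_k$ is resolved inside the $N$ dyadic intervals following $l_k(m)$, because the particles involved do not move (events $\mathcal{S}_m$, $\mathcal{G}_m$). The next arrival $\hat\tau_{k+1}$ is not in that window. The discrete avoidance event coincides with $\{|\br^x_s|>\CE_{\hat\tau_k-s},\ \forall s\le\hat\tau_k\}$. Passing to the limit, the conditional law of the surviving active particles at $\hat\tau_k$, given the path of $\CE$ up to and including the $k$-th cascade, is Poisson with intensity $\lambda\prob_x(|\br_s|>\CE_{\hat\tau_k-s},\forall s\le\hat\tau_k)$.

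\textbf{Step 4: positions rather than starting points.} The statement concerns the positions $\br^x_{\hat\tau_k}$, which are the surviving starting points transported by the walk. By the Mapping/Marking Theorem, displacing each surviving point independently (conditioned on survival) still gives a PPP. This is the same passage already implicit in Theorem \ref{thm:kurtz}.

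\textbf{Main obstacle.} The difficulty is justifying the conditioning at the random time $\hat\tau_k$, which depends on the particles themselves. I would handle it by writing the conditional law as a limit over the discrete chain, stopped at the random index $l_k(m)$. Since $\{l_k(m)=l\}$ is measurable with respect to $\{\mathfrak{I}_i\}_{i\le l}$, conditioning on $l_k(m)=l$ together with $\{\mathfrak{I}_i\}_{i\le l}$ reduces to conditioning on a fixed finite history, where the thinning statement applies verbatim. The a.s. stabilization in $m$ then removes the discretization, as in the proof above.
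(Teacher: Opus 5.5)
Your proposal is correct and follows the paper's route. The paper says the corollary is already contained in the discretization proof of Theorem~\ref{thm:kurtz}: that proof conditions on $\{\mathfrak{I}_i\}$ up to the random dyadic index containing $\hat\tau_1$, resolves the cascade through the no-movement events $\mathcal{S}$ and $\mathcal{G}$, and iterates to later arrivals by the strong Markov property, which is exactly your stopped-chain argument at $\hat\tau_k$. The one thing to tidy is the conditioning index: define it from the discrete chain itself and let it run through the end of the $k$-th cascade (e.g.\ the first empty index after the $k$-th run of nonempty $\mathfrak{I}_i$'s following the initial cascade), because the interval containing the continuum time $\hat\tau_k$ is measurable with respect to $\{\mathfrak{I}_i\}_{i\le l}$ only on the stabilization event.
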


\begin{remark}
    Note that for future use include in Appendix \ref{sec:jurtz_brownian} a version of Kurt'z theorem, in the case the underlying particles are distributed as Brownian motions. See Open Problem \ref{open:brownian} for a Pool model driven by Brownian particles.
\end{remark}

%First if $A$ is a PPP over $S$ with intensity $\mu(x)$, then the distribution of $A$ given that $|A|>0$ is the same as adding one point of $S$ according to the distribution $\mu(x)/\mu(S)$, then sampling independently 

%%%%%%%%%%%%%%%%%%%%%%%%%%%%%%%%%%%%%%%%%%%%%%%%%%%%%%%%%%%%%%%%%%%%%%%%%%%%%%%%%%%%%%%%%%%%%%%%%%%%%%%%%%%%%%%%%%%%%%%%%%%%%%%%%%%%%%%%%%%%%%%%%%%%%%%%%%%%%%%%%%%%%%%%%%%%%%%%%%%%%%%%%%%%%%%%%%%%%%%%%

%%%%%%%%%%%%%%%%%%%%%%%%%%%%%%%%%%%%%%%%%%%%%%%%%%%%%%%%%%%%%%%%%%%%%%%%%%%%%%%%%%%%%%%%%%%%%%%%%%%%

\section{Explosion condition (Theorem \ref{thm:explode} (1))}
To prove Theorem \ref{thm:explode}, we first present a sequence of auxiliary lemmas. Denote by $\CE_{0,r_0}$ the aggregation we get from the initial engulfing procedure at $t=0$ with initial radius $r_0\ge \frac{1}{\sqrt{\pi}}$. 

\begin{lemma}
	\label{lem_exp}
	For all $\lambda>1$, there is some $c=c(\lambda)>0$ such that for all $r_0\ge\frac{1}{\sqrt{\pi}}$, 
	$$
	\prob(\CE_{0,r_0}=\infty)\ge c>0.
	$$
\end{lemma}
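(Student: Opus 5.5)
We will show that the initial engulfing procedure at $t=0$ started from radius $r_0$ never terminates with probability bounded below uniformly in $r_0$. The key observation is that, by the independence properties of the Poisson point process $A_0$ over disjoint regions, the procedure is a branching-type process in which volume is the natural "time" variable. Write $V_j=\pi r_j^2$ for the area after $j$ rounds. Then $V_{j}=V_{j-1}+\xi_j$, and conditionally on $\xi_1,\dots,\xi_{j-1}$, the variable $\xi_j$ counts the points of $A_0$ in the fresh annulus $B(r_{j-1})\setminus B(r_{j-2})$. This annulus has area $V_{j-1}-V_{j-2}=\xi_{j-1}$ (and area $V_0=\pi r_0^2$ at the first step). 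Hence
\[
\xi_j\mid \xi_1,\dots,\xi_{j-1}\ \sim\ \Pois(\lambda\,\xi_{j-1}),
\]
with $\xi_1\sim\Pois(\lambda \pi r_0^2)$. Equivalently, $(\xi_j)_{j\ge1}$ is a Galton--Watson process with $\Pois(\lambda)$ offspring distribution, started from $\xi_1$ individuals. The event $\{\CE_{0,r_0}=\infty\}$ is precisely the event that this Galton--Watson process survives, i.e. $N=\infty$.

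The plan then proceeds in three steps. First, make the reduction above rigorous. The regions $B(r_j)\setminus B(r_{j-1})$ are determined by the points already observed in $B(r_{j-1})$, so they are stopping sets for the planar PPP. By the strong Markov property of Poisson processes over stopping sets (or by revealing the annuli one at a time and using independence over disjoint regions), the count in each new annulus is conditionally Poisson with mean $\lambda$ times its area. Second, use that for $\lambda>1$ the $\Pois(\lambda)$ Galton--Watson process is supercritical. Its extinction probability $q=q(\lambda)<1$ is the smallest root of $q=e^{\lambda(q-1)}$, so starting from $m$ individuals the survival probability is $1-q^m$. Third, average over $\xi_1$ to obtain
\[
\prob(\CE_{0,r_0}=\infty)=1-\mathbb{E}\bigl[q^{\xi_1}\bigr]=1-e^{-\lambda \pi r_0^2(1-q)}\ \ge\ 1-e^{-\lambda(1-q)}=:c(\lambda)>0,
\]
where the inequality uses $\pi r_0^2\ge 1$.

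The only delicate point is the first step, namely justifying the conditional Poisson law over random, data-dependent annuli. The annuli are nested and increasing and each is determined by the configuration inside the previous ball, so a direct inductive argument suffices. We condition on $A_0\cap B(r_{j-1})$ and use that $A_0\cap B(r_{j-1})^c$ is an independent PPP. One must also handle the case $\xi_{j-1}=0$, where the annulus is empty and the procedure stops. This is consistent with the Galton--Watson description, since $0$ is absorbing. The remaining steps are standard branching-process facts.
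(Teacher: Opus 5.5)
Your proposal is correct and follows the paper's route. Like the paper, you identify $(\xi_j)$ with the generation sizes of a $\Pois(\lambda)$ Galton--Watson process and bound the explosion probability below by a survival probability; the paper settles for $(1-e^{-1})p_0(\lambda)$, essentially $\prob(\xi_1\ge 1)$ times the survival probability from one ancestor, whereas you compute $1-\mathbb{E}[q^{\xi_1}]$ exactly, and your stopping-set justification of the conditional Poisson law is more careful than the paper's one-line assertion (note also that since $q=e^{\lambda(q-1)}$, your constant simplifies to $c(\lambda)=1-q$).
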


\begin{proof}
	By the definition of Poisson point process $A$. It is immediate that $\xi_1$ is a Poisson random variable with intensity $\lambda \pi r_0^2> 1$. And given $\xi_i=k>0$, $\xi_{i+1}\sim \text{Pois}(k\lambda)$. Thus $\xi_i$ is identically distributed as the size of the $i$th generation in a supercritical Galton-Watson tree with offspring distribution $\text{Pois}(\lambda)$. Denote by $p_0(\lambda)>0$ the surviving probability of the G-W process. Then 
	$$
	\prob(\CE_{0,r_0}=\infty)\ \ge (1-e^{-1})p_0(\lambda):=c(\lambda).
	$$ 
\end{proof}

The next lemma shows that given a Poisson cloud of particles outside a ball, one can always wait for long enough so that the particles will move into the vacuum as much as possible. To be precise,  for all $\lambda>0$ and $R\ge \frac{1}{\sqrt{\pi}}$, let $A^{\lambda,R}=\{Y^i\}$ be a Poisson point process supported on $B(R)^c$ with intensity $\lambda$. For each point $Y^i$ of $A$, let $\br^i_t, \ t\ge 0$ be an independent continuous in time and space random walks starting from $Y^i$. Let $A^{\lambda,R}_t=\{\br^i_t\}_{i=1}^\infty$ be the configuration of the Poisson cloud at time $t$. We denote by $\Lambda^{\lambda,R}_t(x), x\in \BR^2$ the intensity of the aforementioned process at time $t$.

\begin{lemma}
	\label{lem_refill}
	For all $\delta>0$, there is an $M=M(\delta)<\infty$ such that for all $\lambda>0$ and $R\ge \frac{1}{\sqrt{\pi}}$,
	$$
	\Lambda^{\lambda,R}_t(x)\ge \lambda(1-\delta), \ \forall x\in \BR^2
	$$
	for all $t\ge M+R^3$. 
\end{lemma}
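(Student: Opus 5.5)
The intensity of the evolved cloud is explicit. By the displacement (marking) theorem, independent motions preserve the Poisson property, so
\[
\Lambda^{\lambda,R}_t(x)=\lambda\int_{B(R)^c} p_t(y,x)\,dy=\lambda\bigl(1-\prob_x(\br_t\in B(R))\bigr),
\]
where $p_t$ is the transition density of the continuous time and space random walk. The second equality uses the symmetry of the increments, $p_t(y,x)=p_t(x,y)$, together with $\int_{\BR^2}p_t(x,y)\,dy=1$. Strictly speaking $p_t$ also has an atom at $y=x$ of mass $e^{-t}$, coming from the event of no jumps. This atom contributes only to points $x\in B(R)^c$, where it only helps. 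It is harmless in any case, since $e^{-t}\le e^{-M}$.

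It therefore suffices to show that $\sup_x\prob_x(\br_t\in B(R))\le\delta$ whenever $t\ge M+R^3$. Condition on $N(t)=n$. Given this, $\br_t-x\sim \text{Normal}(0,nI)$, whose density is bounded by $(2\pi n)^{-1}$. Hence, uniformly in $x$,
\[
\prob_x(\br_t\in B(R)\mid N(t)=n)\le \frac{\pi R^2}{2\pi n}=\frac{R^2}{2n}.
\]
For $n=0$ the bound is trivially $1$.

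Now split according to whether $N(t)\ge t/2$. This gives
\[
\prob_x(\br_t\in B(R))\le \prob(N(t)<t/2)+\frac{R^2}{t}.
\]
The first term is at most $e^{-ct}$ by a standard Poisson large deviation (Chernoff) bound. For the second term, using $R\ge 1/\sqrt\pi$ gives $R^2\le \sqrt{\pi}\,R^3$. Therefore, for $t\ge M+R^3$,
\[
\frac{R^2}{t}\le \frac{R^2}{M+R^3}\le \min\Bigl(\sqrt{\pi},\ \frac{R^2}{M}\Bigr),
\]
and we need this to be small uniformly in $R$. If $R\le M^{1/3}$, then $R^2/M\le M^{-1/3}$. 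If $R>M^{1/3}$, then $R^2/R^3=1/R<M^{-1/3}$. In both cases $R^2/t\le M^{-1/3}$.

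To conclude, choose $M$ so large that $e^{-cM}+M^{-1/3}\le\delta$. Then the bound holds for every $\lambda$ and every $R\ge 1/\sqrt{\pi}$, as required. The exponent $3$ is more than needed; any $R^{2+\epsilon}$ would work, and $R^2$ suffices up to constants.

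The only real subtlety is justifying the Poisson description of $A^{\lambda,R}_t$ and the symmetry step that rewrites $\int_{B(R)^c}p_t(y,x)\,dy$. Both are standard consequences of the marking theorem \cite[Section 5.2]{kingman1992poisson}, already used in the proof of Theorem \ref{thm:kurtz}, and of the symmetric Gaussian steps.
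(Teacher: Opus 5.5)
Your proof is correct and follows essentially the same route as the paper. Both arguments rewrite the intensity as $\lambda\bigl(1-\prob_x(X_t\in B(R))\bigr)$ by symmetry, control the jump count $N_t$ by Poisson concentration, bound the Gaussian density by $(2\pi n)^{-1}$, and make $R^2/(M+R^3)$ uniformly small; your explicit bound $R^2/(M+R^3)\le M^{-1/3}$ and your treatment of the no-jump atom only spell out steps the paper leaves implicit.
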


\begin{proof}
	By reversibility, for any $x\in \BR^2$, 
	$$
	\Lambda^{\lambda,R}_t(x)=\lambda\prob_x(X_t\notin B(R))=\lambda\rb{1-\prob_x(X_t\in B(R))},
	$$
	where 
	$$
	X_t=\sum_{i=1}^{N_t} Z_i
	$$ 
	with $\{Z_i\}_{i=1}^\infty$ an i.i.d. sequence of $N(0,I_2)$ r.v.'s and $N_t, t\ge 0$ a standard process independent to $\{Z_i\}$. For fixed $\delta>0$, there is an $M_1$ such that for all $t\ge M_1$, 
	$$
	\prob\left(N_t\in \left[\frac{t}{2}, 2t\right]\right)\ge 1-\frac{\delta}{2},
	$$
	implying that 
	$$
	\prob_x(X_t\in B(R))\le \frac{\delta}{2}+\max_{s\in [\frac{t}{2}, 2t]} \prob_x(N(0,sI_2)\in B(R)). 
	$$
	Now noting that
	$$
	\prob_x(N(0,sI_2)\in B(R))= \prob_x\rb{N(0,I_2)\in B\rb{\frac{R}{\sqrt{s}}}}\le \frac{\pi R^2}{s}\times \frac{1}{2\pi},
	$$
	we have for $s\in  [\frac{t}{2}, 2t]$ and $t\ge M+R^3$
	\beq
	\label{eq_Norm_pdf}
	\prob_x(N(0,sI_2)\in B(R))\le \frac{R^2}{M+R^3}.
	\eeq
	So there is a $M_2$ such that $\frac{R^2}{M+R^3}<\frac{\delta}{2}$ for all $M\ge M_2$ and $R\ge 0$, which conclude the proof of this lemma.
\end{proof}

Now we can introduce a increasing sequence of random times, denoted by $\{\tau_n, n=1,2,\cdots\}$ each of which will be later shown to give an uniformly bounded chance of explosion. 

\begin{itemize}
	\item If $\CE_0=\infty$, we have an explosion at the very beginning and can simply let $\tau_n\equiv \infty$ for all $n\ge 1$.
	\item Given $\CE_0<\infty$, let $R_1=\CE_0$, $\tau_1=M+R_1^3$;
	\item Now for all integer $k\ge 1$, if $\CE_{\tau_k}=\infty$, then let $\tau_j\equiv \infty$ for all $j\ge k+1$; and if $\CE_{\tau_k}<\infty$, let $R_{k+1}=\CE_{\tau_k}+\tau_k+M_3$, and $\tau_{k+1}=\tau_k+R_{k+1}^3+M$, where $M_3>0$ is some absolute constant to be specified later. 
\end{itemize}
With the construction above, we further have:
\begin{lemma}
	\label{lem_stop}
	$\{\tau_n, n=1,2,\cdots\}$ form a sequence of stopping times with respect to filtration $\sigma\{\CE_s, 0\le s\le t\}$, each of which is supported on a discrete set $I_n$ such that for all $T<\infty$, $|I_n\cap (0,T]|<\infty$. 
\end{lemma}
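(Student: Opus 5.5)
We argue by induction on $n$, proving simultaneously that $\tau_n$ is a stopping time with respect to $\mathcal F_t:=\sigma\{\CE_s,0\le s\le t\}$ and that $\tau_n$ takes values in a set $I_n\subset(0,\infty]$ which is \emph{locally finite}, i.e.\ $|I_n\cap(0,T]|<\infty$ for all $T<\infty$. The key structural fact is that the pool radius only takes values in the discrete set
\[
\mathcal R:=\left\{\sqrt{m/\pi}: m\in\BN,\ m\ge 1\right\}\cup\{\infty\}.
\]
Indeed, every engulfing step adds an integer number of unit masses to the area $\pi r^2$, starting from area $1$. Consequently $\mathcal R\cap[0,K]$ is finite for every $K$. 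This is what gives discreteness.

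For the base case, $\tau_1=M+\CE_0^3$ on $\{\CE_0<\infty\}$ and $\tau_1=\infty$ otherwise. Since $\CE_0\in\mathcal R$, the map $r\mapsto M+r^3$ is increasing, and only finitely many $r\in\mathcal R$ satisfy $M+r^3\le T$, the set $I_1:=\{M+r^3:r\in\mathcal R,r<\infty\}\cup\{\infty\}$ is locally finite. For $s\in I_1$ finite, $\{\tau_1=s\}=\{\CE_0=r\}$ with $r$ the unique preimage, and this event lies in $\mathcal F_0\subset\mathcal F_s$. Hence $\{\tau_1\le t\}$ is a finite union of $\mathcal F_t$-measurable events, and $\tau_1$ is a stopping time.

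For the inductive step, suppose $\tau_k$ is a stopping time with values in a locally finite set $I_k$. On $\{\tau_k=s\}$ with $s<\infty$, we have $\tau_{k+1}=s+(\CE_s+s+M_3)^3+M$ when $\CE_s<\infty$, and $\tau_{k+1}=\infty$ otherwise. We set
\[
I_{k+1}:=\left\{s+(r+s+M_3)^3+M:\ s\in I_k\setminus\{\infty\},\ r\in\mathcal R\setminus\{\infty\}\right\}\cup\{\infty\}.
\]
If $u=s+(r+s+M_3)^3+M\le T$, then $s\le T$ and $r\le T^{1/3}$. Both ranges are finite sets, so $I_{k+1}\cap(0,T]$ is finite. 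For measurability, fix $t$ and write
\[
\{\tau_{k+1}\le t\}=\bigcup_{s\in I_k,\,s\le t}\ \bigcup_{r\in\mathcal R,\ s+(r+s+M_3)^3+M\le t}\{\tau_k=s\}\cap\{\CE_s=r\},
\]
which is a finite union. Here $\{\tau_k=s\}\in\mathcal F_s$ because $\tau_k$ is a stopping time with discrete range, so $\{\tau_k=s\}=\{\tau_k\le s\}\setminus\bigcup_{s'\in I_k,s'<s}\{\tau_k\le s'\}$ is a finite operation. Also $\{\CE_s=r\}\in\mathcal F_s$, and $s\le t$ gives $\mathcal F_s\subset\mathcal F_t$. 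Hence $\tau_{k+1}$ is a stopping time. Note that $\tau_{k+1}>\tau_k$ on $\{\tau_k<\infty\}$, since $(r+s+M_3)^3+M>0$.

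The only delicate point is the claim that $\CE_s$ takes values in $\mathcal R$ at every deterministic time $s$, including possibly mid-process. This holds because $\CE$ is piecewise constant and changes only at the engulfing times $\hat\tau_k$, where the area $\pi\CE^2$ jumps by the nonnegative integer $1+\sum_j\xi_j$. After an explosion $\CE\equiv\infty$, which is handled by the $\infty$ conventions above. Everything else is bookkeeping.
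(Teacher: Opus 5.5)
Your proof is correct and follows the same route as the paper. Both argue by induction on $n$, use the fact that $\pi\CE_t^2$ is always an integer (so $\CE_{\tau_k}$ ranges over a discrete set), and write $\{\tau_{k+1}\le t\}$ as a finite union of events $\{\tau_k=s\}\cap\{\CE_s=r\}$. You also spell out two points the paper leaves implicit: why $\{\tau_k=s\}\in\mathcal F_s$, and why $\CE_s$ lies in the discrete set at every deterministic time.
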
	

\begin{proof}
	We prove this lemma by induction. Note that the engulfing process $\{\CE_t\}_t$ always has integer volume, which implies that the first stopping time $\tau_1\in \sigma(\CE_0)$ satisfies the lemma. Now assume this holds true for $k\ge 1$. Then for each $T<\infty$, enumerate the finite set $I_k\cap (0,T]$ by $\{t_1,\cdots, t_m\}$. Now for $\tau_{k+1}$, it is easy to see that 
	$$
	\{\tau_{k+1}<T\}=\bigcup_{\scriptsize\begin{aligned}&j\le m, (i/\pi)^{3/2}<T,\\ &t_j+[(i/\pi)^{1/2}+t_j+M_3]^3+M<T\end{aligned}} \{\tau_{k}=t_j\}\cap \{\CE_{t_j}=(i/\pi)^{1/2}\}.
	$$
	Since the union above is finite, and for each pair of $(i,j)$, $\tau_{k+1}\equiv t_j+[(i/\pi)^{1/2}+t_j+M_3]^3+M $ under event $ \{\tau_{k}=t_j\}\cap \{\CE_{t_j}=(i/\pi)^{1/2}\}$, one has verified that $|I_{k+1}\cap (0,T]|<\infty$. Moreover, recalling that $\tau_n$ is a stopping time, 
	$$
	\{\tau_{k}=t_j\}\cap \{\CE_{t_j}=(i/\pi)^{1/2}\}\in \sigma\{\CE_s, 0\le s\le T\}
	$$
	which concludes the proof of this lemma. 
\end{proof}

Recalling the definition of $A_t$, at let $\tilde A_n:=\{\tilde x_{n,j}\}_{j=1}^\infty$ be the collection of $\br^i_{\tau_n}$ with $a(j,\tau_n)=1$. I.e. $\tilde A_n$ is the field of particles outside $\CE_{\tau_n}$ that has not yet released their masses. Given Lemma \ref{lem_stop} and Theorem \ref{thm:kurtz}, one may see that: for any $n\ge 1$ under event $\CE_{\tau_n}<\infty$ the conditional distribution of $\tilde A_n$ given $\{\CE_s, 0\le s\le\tau_n\}$ is a Poisson point process with intensity given by 
\beq
\label{eq_con_inten}
\tilde \Lambda(x|\CE_s, 0\le s\le\tau_n)=\lambda \prob_x(|X_s|>\CE_{\tau_n-s}, \ \forall s\le \tau_n).
\eeq
The subsequent lemma shows that this conditional intensity is virtually intact beyond radius $R_n$. 

\begin{lemma}
	\label{lem_int_far}
	For all $\delta>0$, there is a constant $M_3$ such that for all $n$, we a.s. have 
	$$
	\tilde \Lambda(x|\CE_s, 0\le s\le\tau_n)\ge \lambda (1-\delta), \forall x\in B(R_{n+1})^c
	$$
	under event $\{\CE_{\tau_n}<\infty\}$. 
\end{lemma}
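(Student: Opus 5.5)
We need to show that for $|x|\ge R_{n+1}=\CE_{\tau_n}+\tau_n+M_3$,
$$\prob_x\bigl(|X_s|>\CE_{\tau_n-s}\ \forall s\le\tau_n\bigr)\ge 1-\delta .$$
The plan is to use only the monotonicity of the pool. Since $s\mapsto\CE_s$ is nondecreasing, we have $\CE_{\tau_n-s}\le\CE_{\tau_n}$ for every $s\le\tau_n$. Hence the event in \eqref{eq_con_inten} contains
$$\bigl\{|X_s|>\CE_{\tau_n},\ \forall s\le \tau_n\bigr\},$$
and it suffices to bound the probability of its complement.

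Start the walk at $x$ with $|x|\ge \CE_{\tau_n}+\tau_n+M_3$. For the walk to enter $B(\CE_{\tau_n})$ before time $\tau_n$, it must have $\sup_{s\le\tau_n}|X_s-x|\ge \tau_n+M_3$. Because $X_s-x=\sum_{i\le N_s}Z_i$ is a compound Poisson process, we bound $\sup_{s\le \tau_n}|X_s-x|\le \sum_{i\le N_{\tau_n}}|Z_i|$. This gives
$$\prob_x(\text{fail})\le \prob\Bigl(\sum_{i=1}^{N_{T}}|Z_i|\ge T+M_3\Bigr),\qquad T=\tau_n .$$

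The key estimate is that this probability is at most $\delta$ uniformly in $T\ge0$, once $M_3$ is large. The issue is that $E|Z_i|=\sqrt{\pi/2}>1$, so a linear-in-$T$ threshold with slope $1$ is not automatically large-deviation-safe. I would handle this in two steps.

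\emph{Rescaling the threshold.} The construction of $\tau_n$ leaves the freedom to choose constants, and $\tau_n\ge M$ holds. One option is to note that the mean of the sum is $\sqrt{\pi/2}\,T$, so a threshold of $T$ fails for large $T$. The cleaner fix is to use a coordinatewise/radial bound instead. Writing $u=x/|x|$, reaching $B(\CE_{\tau_n})$ requires the projection $\langle X_s-x,u\rangle\le -(\tau_n+M_3)$. This projection is a one-dimensional compound Poisson martingale with $N(0,1)$ jumps and variance $s$. By Doob's (or Kolmogorov's) maximal inequality, or by an exponential martingale bound,
$$\prob\Bigl(\inf_{s\le T}\langle X_s-x,u\rangle\le -(T+M_3)\Bigr)\le \frac{T}{(T+M_3)^2}\le \frac{1}{4M_3}.$$
The last inequality follows from $(T+M_3)^2\ge 4TM_3$. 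Choosing $M_3=1/(4\delta)$ makes this at most $\delta$ uniformly in $T$ and $n$.

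\emph{Checking that the projection argument is valid.} Entering $B(\CE_{\tau_n})$ forces $|X_s|\le\CE_{\tau_n}$. We also have $\langle X_s,u\rangle\le |X_s|$ and $\langle x,u\rangle=|x|\ge \CE_{\tau_n}+\tau_n+M_3$. Together these give $\langle X_s-x,u\rangle\le -(\tau_n+M_3)$, as required.

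Finally, the conditioning issue: $\tau_n$ and $\CE_{\tau_n}$ are random. Lemma \ref{lem_stop} shows $\tau_n$ takes values in a discrete set, and the intensity formula \eqref{eq_con_inten} is evaluated for a fixed realization of the path $\{\CE_s\}$. The bound above is deterministic given that path and uniform in it. It therefore holds a.s. on $\{\CE_{\tau_n}<\infty\}$, and multiplying by $\lambda$ gives the lemma. The main step is the uniform-in-$T$ maximal bound; the martingale projection avoids the drift problem that would arise from bounding by the total jump length.
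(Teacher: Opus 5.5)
Your proof is correct. Its first step is exactly the paper's: you use the monotonicity of $s\mapsto\CE_s$ to reduce to the event $\{|X_s|>\CE_{\tau_n}\ \forall s\le\tau_n\}$, and hence to controlling a displacement of size $\tau_n+M_3$ uniformly in the (fixed, given the history) horizon $\tau_n$.

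The difference is in how that displacement is controlled.
\begin{itemize}
\item The paper writes the walk as a Brownian motion run on the Poisson clock, so that $X_t-x$ is equal in law to $B_{N_t}$. It then splits the failure event into $\{N_{\tau_n}\ge 2\tau_n+M_3\}$ and $\{\max_{s\le 2\tau_n+M_3}|B_s|\ge \tau_n+M_3\}$. It chooses $M_3$ so that each of these has probability below $\delta/2$ uniformly in time, using that $N_t$ concentrates at $t$ while Brownian motion only travels distance of order $\sqrt{t}$. This controls the full two-dimensional displacement, but $M_3$ is left implicit.
\item You project onto the outward direction $u=x/|x|$. You check that hitting $B(\CE_{\tau_n})$ forces $\langle X_s-x,u\rangle\le-(\tau_n+M_3)$, which is correct. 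You then apply Doob's $L^2$ maximal inequality to the one-dimensional compound Poisson martingale $\langle X_s-x,u\rangle$, whose variance is $s$. This gives the bound $T/(T+M_3)^2\le 1/(4M_3)$ in one line, with no two-part decomposition, and the explicit choice $M_3=1/(4\delta)$.
\end{itemize}
The projection is not even essential: Doob's inequality applied directly to $|X_s-x|$, with $E|X_T-x|^2=2T$, gives $1/(2M_3)$ the same way.

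You were right that the crude bound $\sum_{i\le N_T}|Z_i|$ is useless here, since $E|Z_i|=\sqrt{\pi/2}>1$. In a written proof, drop that detour and the aside about ``rescaling the threshold'' and go straight to the martingale bound.
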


\begin{proof}
	Recalling that $R_{n+1}=\CE_{\tau_n}+\tau_n+M_3$, by \eqref{eq_con_inten}, we may treat $\tau_n$ and the growth history of $\CE$ as fixed and have 
	\begin{align*}
		\tilde \Lambda(x|\CE_s, 0\le s\le\tau_n)&=\lambda\prob_x(|X_t|>\CE_{\tau_n-t}, \ \forall t\le \tau_n)\\
		&\ge \lambda \prob_x(|X_t|>\CE_{\tau_n}, \ \forall t\le \tau_n)\\
		&\ge \lambda \prob_x(|X_t-x|<\tau_n+M_3, \ \forall t\le \tau_n).
	\end{align*}
	Since it is easy to see that $\prob_0(\max_{s\le 2t}|B_s|\ge t)\to 0$ as $t\to\infty$. one can choose a finite $M_3$ such that 
	$$
	\prob_0(\max_{s\le 2t}|B_s|\ge t+M_3/2)<\frac{\delta}{2}, \forall t\ge 0
	$$
	while at the same time 
	$$
	\prob(N_t\ge 2t+M_3)<\frac{\delta}{2}, \forall t\ge 0
	$$
	where $N_t$ is a standard Poisson process. Thus by the definition of $X_t$, 
	\begin{align*}
		\prob_x(|X_t-x|<\tau_n+M_3, \ \forall t\le \tau_n)\ge 1&-\prob(N_{\tau_n}\ge 2\tau_n+M_3)\\
		&-\prob_0(\max_{s\le 2\tau_n+M_3}|B_s|\ge \tau_n+M_3/2+M_3/2)\ge 1-\delta.
	\end{align*}
\end{proof}

Now under event $\{\CE_{\tau_n}<\infty\}$, we can retain the mass of particles in $\tilde A_n$ and release them all at once at $\tau_{n+1}$. To be precise, 

\begin{itemize}
	\item Let $r_{n,0}=\CE_{\tau_n}$, $\hat A_{n,0}=\{i: a(i,\tau_n)=1, \ \br^i_{\tau_{n+1}}\in B_0(\CE_{\tau_n})\}$, $\xi_{n,0}=|\hat A_{n,0}|$ and let $r_{n,1}=\sqrt{\CE_{\tau_n}^2+\frac{\xi_{n,0}}{\pi}}$ if $\xi_{n,0}>0$
	\item Then for each layer $m\ge 1$, $\hat A_{n,m}=\{i: a(i,\tau_n)=1, \ \br^i_{\tau_{n+1}}\in B_0(r_{n,m})\setminus B_0(r_{n,m-1})\}$ and $\xi_{n,m}=|\hat A_{n,m}|$, let $r_{n,m+1}=\sqrt{r_{n,m}^2+\frac{\xi_{n,m}}{\pi}}$. 
\end{itemize}
We say an delayed explosion happens at step $n$ if $\xi_{n,m}>0$ for all $m$, and denote such event by $\chi_n$. The following lemma shows that delaying the release of mass can only slow down the growth of aggregation. 

\begin{lemma}
	\label{lem_retain}
	For all integers $n,m$ and $i\in \hat A_{n,m}$, one always has $a(i,\tau_{n+1})=0$, i.e., the particle has released its mass by the next stopping time. 
\end{lemma}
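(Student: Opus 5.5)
We prove Lemma \ref{lem_retain} by induction on the layer index $m$, using only that $t\mapsto \CE_t$ is non-decreasing and that labels never switch from $0$ back to $1$. Fix $n$ with $\CE_{\tau_n}<\infty$; if $\CE_{\tau_{n+1}}=\infty$ there is nothing to prove, since every particle is then engulfed. The key intermediate claim is
\[
r_{n,m}\le \CE_{\tau_{n+1}}\quad\text{for all } m\ge 0 \text{ for which } r_{n,m} \text{ is defined}.
\]
Granting this, take $i\in\hat A_{n,m}$. Then $|\br^i_{\tau_{n+1}}|\le r_{n,m}\le \CE_{\tau_{n+1}}$. The label is defined as $a(i,\hat\tau_k)=\mathbbm{1}_{\cb{|\br^i_{\hat\tau_k}|>\CE_{\hat\tau_k}}}$ at engulfing times, and an active particle entering the pool triggers an engulfing time. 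It follows that a particle sitting inside $B(\CE_{\tau_{n+1}})$ at time $\tau_{n+1}$ cannot still be active: either it entered at some $\hat\tau_k\le\tau_{n+1}$ and was deactivated, or it lies in the ball at an engulfing time and was deactivated by the label update. Hence $a(i,\tau_{n+1})=0$. One small point must be handled separately: a particle exactly on the boundary $|\br^i|=\CE$. Here the engulfing definition uses $\le$, and the procedure's $\xi$-counts treat the boundary consistently, so such a particle also counts as absorbed.

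The claim itself is proved by induction on $m$.

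\emph{Base case.} We have $r_{n,0}=\CE_{\tau_n}\le\CE_{\tau_{n+1}}$ by monotonicity of $\CE$.

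\emph{Inductive step.} Suppose $r_{n,j}\le \CE_{\tau_{n+1}}$ for all $j\le m$. By the inductive hypothesis, every $i\in\hat A_{n,0}\cup\dots\cup\hat A_{n,m}$ is, by the argument above, inactive at $\tau_{n+1}$. Each such particle was active at $\tau_n$ and lies outside $B(\CE_{\tau_n})$ in the sense that it has not been counted in $\pi\CE_{\tau_n}^2$. Consequently it released its unit of mass at some engulfing time in $(\tau_n,\tau_{n+1}]$. Mass preservation gives
\[
\pi\CE_{\tau_{n+1}}^2 \ge \pi\CE_{\tau_n}^2 + \#\{\text{particles active at }\tau_n\text{ and absorbed in }(\tau_n,\tau_{n+1}]\}.
\]
Indeed, every engulfing step adds exactly one unit of area per absorbed particle. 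The right-hand side is at least $\pi\CE_{\tau_n}^2+\sum_{j\le m}\xi_{n,j}=\pi r_{n,m+1}^2$, so $r_{n,m+1}\le\CE_{\tau_{n+1}}$.

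The main obstacle is making this bookkeeping rigorous. We must check that absorbed particles are never double counted, and that the distinct sets $\hat A_{n,j}$ consist of distinct particles which were genuinely active at $\tau_n$, so each contributes a fresh unit of mass. The first thing to try is an explicit identity: $\pi\CE_t^2$ equals $1$ plus the number of particles with label $0$ at time $t$, the initial pool being $1$ plus the engulfed points. This should follow directly by induction over the engulfing times $\hat\tau_k$ from the definition. With this identity the inequality above is immediate, since all particles in $\bigcup_{j\le m}\hat A_{n,j}$ have label $1$ at $\tau_n$ and label $0$ at $\tau_{n+1}$.
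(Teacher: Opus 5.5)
Your proposal is correct and follows the paper's route: induction over the layers. Monotonicity of $\CE$ handles $\hat A_{n,0}$, and a volume (mass-conservation) count gives $\CE_{\tau_{n+1}}\ge r_{n,m+1}$, which places the next layer inside the pool at time $\tau_{n+1}$. Your explicit identity $\pi\CE_t^2=1+\#\{i: a(i,t)=0\}$ is a rigorous version of the paper's ``by volume argument''; it holds under the intended reading that labels never revert to $1$ and that only active particles are counted in the engulfing procedure, which the paper also uses tacitly.
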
	

\begin{proof}
	We prove this by induction on layers of the engulfing procedure. By definition, for each $i$, $a(i,\tau_{n+1})=1$ if and only if $|\br^i_t|>\CE_t, \ \forall t\le \tau_{n+1}$. So for each $i\in \hat A_{n,0}$, its mass must have been released since $\CE$ is increasing, thus 
	$$
	\CE_{\tau_{n+1}}\ge \sqrt{\CE_{\tau_n}^2+\frac{\xi_{n,0}}{\pi}}=r_{n,1}.
	$$
	Now by induction, if we assume for some $m\ge 1$, %\note{changed $n$ to $m$}
	$$
	a(i,\tau_{n+1})=0, \ \forall i\in \bigcup_{j=0}^{m-1} \hat A_{n,j},
	$$
	then by volume argument one must have 
	$$
	\CE_{\tau_{n+1}}\ge\sqrt{\CE_{\tau_n}^2+\frac{\sum_{j=0}^{m-1}\xi_{n,j}}{\pi}}=r_{n,m}.
	$$
	Now for each particle $i\in \hat A_{n,m}$, 
	$$
	\br^i_{\tau_{n+1}}\in B_0(r_{n,m})\setminus B_0(r_{n,m-1})\Rightarrow \left| \br^i_{ \tau_{n+1}}\right|<\CE_{\tau_{n+1}}
	$$	
	implying that $a(i,\tau_{n+1})=0$ and that all the $\xi_{n,m}$ particles in $\hat A_{n,m}$ have also released their masses, which gives 
	$$
	\CE_{\tau_{n+1}}\ge\sqrt{r_{n,m}^2+\frac{\xi_{n,m}}{\pi}}=r_{n,m+1}. 
	$$
	Thus, all particles have released their masses by no later than $\tau_{n+1}$. 
\end{proof}

\begin{proof}[Proof of Theorem \ref{thm:explode}]
	Now we are ready to finish the proof of explosion of the supercritical pool model. Recalling the definition of our stopping times, it is easy to see that $\{T_\CE=\infty\}\subset \{\CE_{\tau_n}<\infty\}$ for all $n\ge 1$. Thus it suffices to prove that for all $\lambda>1$, there is a $c=c(\lambda)\in (0,1)$ such that
	\beq
	\label{eq_geo}
	\prob(\CE_{\tau_{n+1}}<\infty| \CE_{\tau_n}<\infty)\le 1-c\Rightarrow \prob(\CE_{\tau_n}<\infty)\le (1-c)^{n-1}
	\eeq
	To see this, by Lemma \ref{lem_retain}, we only need to prove 
	$$
	c\le \prob(\chi_n| \CE_{\tau_n}<\infty)\le \prob(\CE_{\tau_{n+1}}=\infty| \CE_{\tau_n}<\infty).
	$$
	Now under event $\{\CE_{\tau_n}<\infty\}$, and given the growth history $\{\CE_s, 0\le s\le\tau_n\}$, applying Lemma \ref{lem_int_far} and \ref{lem_refill} with $\delta=(\lambda-1)/3$, we have the particles in $\tilde A_n$ (conditionally) form a Poisson point process at $\tau_{n+1}$ with intensity uniformly bounded from below by $\lambda(1-\delta)^2> \frac{\lambda+2}{3}>1$ for all $x\in \BR^2$. Thus by Lemma \ref{lem_exp}, there is a $c=c(\lambda)>0$ such that we have 
	$$
	\prob(\chi_n|\CE_s, 0\le s\le\tau_n)\ge c
	$$
	holds almost surely under event $\{\CE_{\tau_n}<\infty\}$, and a total probability theorem concludes the proof. 
\end{proof}

\section{Critical case $\lambda=1$ {(Theorem \ref{thm:explode} (2))}
}In this section we first prove the second half of Theorem \ref{thm:explode}, i.e. the engulfing process $\CE_t$ does not explode at $\lambda=1$. To see this, one may first note that by Theorem \ref{thm:kurtz} the intensity of active particles with $a(\cdot,t)=1$ at each fixed time $t$ as well as stopping time $\hat\tau_k$ is bounded from above by 1. This also implies that for each round of engulfing procedure, the Galton-Watson process is stochastically bounded from above by the critical one with offspring distribution $\text{Pois}(1)$. With this observation, we can first consider the case where active particles is of density 1, and present the following result which gives a lower bound on the waiting time between arrivals. For any $R>0$. Let $\tilde A^R=\{\tilde X^i\}_{i=1}^\infty$ be a Poisson point process supported on $B(R)^c$ with intensity 1.  For each point $\tilde X^i$ of $A$, let $\tilde\br^i_t, \ t\ge 0$ be the same independent continuous in time and space random walks starting from $\tilde X^i$. And denote by $\tilde A^R_t$ the configuration of particles $\{\tilde\br^i_t, \ i\ge 0\}$. 

\begin{lemma}\label{lem:inf_rate}
	Consider stopping time 
	$$
	\Delta\tau=\inf\{t>0, \tilde A^R_t\cap B(R)\not=\emptyset \}
	$$
	be the first time that a particle enters the ball of radius $R$. Then there is some constant $C<\infty$ such that for all $R\ge \pi^{-1/2}$ and $t\ge 0$, we have 
	\bae
	\label{eq_waiting}
	r(R,t):=\lim_{\Delta t\to0}\frac{P(\Delta\tau\in (t,t+\Delta t]|\Delta\tau>t)}{\Delta t}\le r(R,0)\le CR. 
	\eae
\end{lemma}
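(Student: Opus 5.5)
The plan is to compute the hazard rate $r(R,t)$ explicitly through the Poisson structure, in the spirit of Theorem \ref{thm:kurtz}, and then use monotonicity. Particles move independently, and the initial cloud is a Poisson point process of intensity $1$ on $B(R)^c$. Hence the set of particles that have \emph{not} entered $B(R)$ by time $t$ is again a Poisson point process, by the marking theorem \cite[Section 5.2]{kingman1992poisson}. Its intensity at time $t$, at location $y\in B(R)^c$, is
\[
\mu_t(y)=\int_{B(R)^c}\prob_x\bigl(\tilde\br_s\notin B(R)\ \forall s\le t,\ \tilde\br_t\in dy\bigr)\,dx .
\]
This gives $P(\Delta\tau>t)=\exp(-\Lambda(t))$, where $\Lambda(t)=\int_{B(R)^c}\prob_x(\exists s\le t:\tilde\br_s\in B(R))\,dx$. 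The hazard rate is therefore $r(R,t)=\Lambda'(t)$.

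Because the walk only changes position at the Poisson clock rings (rate $1$), the process can enter $B(R)$ only at a jump. So
\[
r(R,t)=\int_{B(R)^c}\mu_t(y)\,\prob\bigl(y+\nu\in B(R)\bigr)\,dy,
\]
with $\nu\sim N(0,I_2)$. This follows by conditioning on the surviving configuration at time $t$ and using that, in $[t,t+\Delta t]$, each particle jumps with probability $\Delta t+o(\Delta t)$, while two jumps occur with probability $O(\Delta t^2)$. To justify the limit I would use dominated convergence, with $\mu_t\le 1$ as the dominating bound.

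Next I would show $\mu_t(y)\le 1=\mu_0(y)$ on $B(R)^c$. By reversibility and symmetry of the jump kernel, $\mu_t(y)=\prob_y(\tilde\br_s\notin B(R)\ \forall s\le t)\le 1$; this is the same reversibility trick used in Lemma \ref{lem_refill}. Since the integrand $\prob(y+\nu\in B(R))\ge 0$, this immediately gives
\[
r(R,t)\le \int_{B(R)^c}\prob(y+\nu\in B(R))\,dy=r(R,0).
\]

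It remains to bound $r(R,0)$. Write $d=|y|-R$ for the distance from $y$ to the ball. Then $\prob(y+\nu\in B(R))\le \prob(|\nu|\ge d)=e^{-d^2/2}$, since $|\nu|^2/2$ is exponential in dimension $2$. Integrating in polar coordinates,
\[
r(R,0)\le \int_R^\infty 2\pi\rho\, e^{-(\rho-R)^2/2}\,d\rho=2\pi\int_0^\infty (R+u)e^{-u^2/2}\,du\le CR,
\]
using $R\ge\pi^{-1/2}$ to absorb the constant term into $CR$.

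The main obstacle is the rigorous identification of $r(R,t)$ as the limit in \eqref{eq_waiting}, namely interchanging the limit and the integral over infinitely many particles. I would handle this with the Poisson formula: $P(\Delta\tau\in(t,t+\Delta t]\mid\Delta\tau>t)=1-\exp\bigl(-\int\mu_t(y)\,p_{\Delta t}(y)\,dy\bigr)$, where $p_{\Delta t}(y)$ is the probability that a walk started at $y$ enters $B(R)$ within time $\Delta t$. Dominating $p_{\Delta t}(y)/\Delta t$ by $e^{-d^2/4}$ plus a term of order $\Delta t$ coming from multiple jumps then gives an integrable bound, so dominated convergence applies.
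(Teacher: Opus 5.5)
Your proof is correct. It uses the same ingredients as the paper's: Poisson marking, surviving intensity at most $1$ via reversibility, only the first jump mattering, and Gaussian tails integrated over a region with boundary of length $O(R)$. The organization is different, though.

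The paper gets $r(R,t)\le r(R,0)$ from a coupling. Conditionally on $\Delta\tau>t$, the surviving cloud is a thinning of the intensity-one cloud, so $\Delta\tau-t$ stochastically dominates $\Delta\tau$. It then bounds $P(\Delta\tau\le\Delta t)$ by a union-bound decomposition into three parts:
\begin{itemize}
\item particles starting at distance at least $\Delta t^{-1/3}$, which contribute a super-exponentially small amount;
\item nearby particles making two jumps, an $o(\Delta t)$ contribution;
\item nearby particles whose single jump lands in $B(R)$, summed over unit annuli, which give the $O(R\,\Delta t)$ main term.
\end{itemize}
You instead write $P(\Delta\tau>t)=e^{-\Lambda(t)}$ and identify $r(R,t)=\Lambda'(t)=\int\mu_t(y)\,\prob(y+\nu\in B(R))\,dy$. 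Both inequalities then follow directly: monotonicity from $\mu_t\le\mu_0$ pointwise, and the bound $CR$ from one polar-coordinate Gaussian integral. This proves that the limit defining $r(R,t)$ exists and gives an explicit formula for it, which the paper tacitly assumes. The price is the interchange of limit and integral.

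On that interchange, your dominating function needs a small correction. A multiple-jump remainder that is of order $\Delta t$ uniformly in $y$ is not integrable over $B(R)^c$, which has infinite area. It must also decay in $d=|y|-R$. A union bound over the jumps gives this. With $N_{\Delta t}$ the number of clock rings in $[0,\Delta t]$,
$p_{\Delta t}(y)\le\sum_{k\ge1}\prob(N_{\Delta t}\ge k)\,\prob(|\nu_1+\cdots+\nu_k|\ge d)\le\sum_{k\ge1}\frac{\Delta t^k}{k!}e^{-d^2/(2k)}$.
So for $\Delta t\le1$ one has $p_{\Delta t}(y)/\Delta t\le\sum_{k\ge1}e^{-d^2/(2k)}/k!$. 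Its integral over $B(R)^c$ is $\sum_{k\ge1}\frac{2\pi}{k!}\bigl(R\sqrt{\pi k/2}+k\bigr)<\infty$, which is a valid dominating function.

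If you only want the inequalities as bounds on the difference quotients, dominated convergence is not needed at all. First, $\Lambda(t+\Delta t)-\Lambda(t)=\int\mu_t\,p_{\Delta t}\le\int_{B(R)^c}p_{\Delta t}=\Lambda(\Delta t)$; this is exactly the paper's stochastic domination in integrated form. Second, integrating the bound above gives $\Lambda(\Delta t)\le CR\,\Delta t$ directly.
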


\begin{proof}
	To prove this lemma, we first note that by Theorem \ref{thm:kurtz}, for any $t>0$ given $\Delta\tau>t$ then intensity of free particles at $x\in B^c(R)$ is given by	$\prob\left(\br^x_{[0,t]}\cap B(R)=\emptyset\right)\le 1$. Thus the conditional distribution of $\Delta\tau-t$ stochastically dominates $\Delta\tau$ and  
	$$
	r(R,t)=\lim_{\Delta t\to0}\frac{P(\Delta\tau\in (t,t+\Delta t]|\Delta\tau>t)}{\Delta t}\le\lim_{\Delta t\to0}\frac{P(\Delta\tau\in (0,\Delta t])}{\Delta t}= r(R,0)
	$$
	which verifies the first inequality of \eqref{eq_waiting}. 
	
	Now to bound $r(R,0)$ from above, we then show that the probability contributed by particles starting from at least $\Delta t^{-1/3}$ away from $B(R)$ is of a higher order. To be precise, we assert that
	\bae
	\label{eq_con_high}
	\prob(E_{R,\Delta t}):=\prob\left(\exists i \text{ s.t. } |\tilde X^i|\ge R+\Delta t^{-1/3}, \ \tilde\br^i_{[0,\Delta t]}\cap B(R)\not=\emptyset \right)=s.e(\Delta t^{-1})R.  
	\eae
	It is immediate to see that the LHS of \eqref{eq_con_high} can be bounded from above by the corresponding expectation:
	\begin{align*}
	&\sum_{i=1}^\infty 	\prob\left(|\tilde X^i|\ge R+\Delta t^{-1/3}, \ \tilde\br^i_{[0,\Delta t]}\cap B(R)\not=\emptyset \right)\\
	&\le  \sum_{n=\lfloor \Delta t^{-1/3}\rfloor}^\infty \sum_{i=1}^\infty 	\prob\left(|\tilde X^i|\in [R+n,R+n+1], \ \tilde\br^i_{[0,\Delta t]}\cap B(R)\not=\emptyset \right)
	\end{align*}
	Then for each integer $n\ge \lfloor \Delta t^{-1/3}\rfloor$ and $|x_0|\in [R+n,R+n+1]$, recalling the definition of our continuous random walk $\br_{\cdot}$, one has 
	\bae
	\begin{aligned}
		\prob\left(\tilde\br^{x_0}_{[0,\Delta t]}\cap B(R)\not=\emptyset \right)&\le \prob(\text{Pois}(\Delta t)\ge n)+ \prob(|N(0,n)|\ge n/2)\\
		&\le C (\Delta t)^n+ \exp(-c n)\le 2C  \exp(-c n).
	\end{aligned}	
	\eae
	Then note that the expected number of particles of $\tilde A^R$ in annulus $\bar B(R+n+1)\setminus B(R+n)$ equals to its volume which is 
	$$
	\Delta V_n=\pi(R+n+1)^2-\pi(R+n)^2\le C(R+n).
	$$
	Thus we have for each $n$
	\begin{align*}
	\sum_{i=1}^\infty 	\prob\left(|\tilde X^i|\in [R+n,R+n+1], \ \tilde\br^i_{[0,\Delta t]}\cap B(R)\not=\emptyset \right)\le C(R+n)\exp(-cn)
	\end{align*}
	which implies that
	\begin{align*}
		&\sum_{i=1}^\infty 	\prob\left(|\tilde X^i|\ge R+\Delta t^{-1/3}, \ \tilde\br^i_{[0,\Delta t]}\cap B(R)\not=\emptyset \right)\\
		&\le  CR\sum_{n=\lfloor \Delta t^{-1/3}\rfloor}^\infty \exp(-cn)+\sum_{n=\lfloor \Delta t^{-1/3}\rfloor}^\infty \exp(-cn/2)\le CR \exp(-c\Delta t^{-1/3}/3)
	\end{align*}
	and thus validate \eqref{eq_con_high}. Now noting that 
	\bae
	\label{eq_63}
	\begin{aligned}
	P(\Delta\tau\in (0,\Delta t])&\le\prob\left(\exists i \text{ s.t. } R<|\tilde X^i|< R+\Delta t^{-1/3}, \ \tilde\br^i_{[0,\Delta t]}\cap B(R)\not=\emptyset \right)+\prob(E_{R,\Delta t})\\
	&\le \prob\left(\exists i \text{ s.t. } R<|\tilde X^i|< R+\Delta t^{-1/3}, \ \tilde\br^i_{[0,\Delta t]}\cap B(R)\not=\emptyset \right)+s.e(\Delta t^{-1})R,
	\end{aligned}
	\eae
	and for the first term, we define events
	$$
	E_1:=\left\{\exists i \text{ s.t. } R<|\tilde X^i|< R+\Delta t^{-1/3}, \ \text{ and } \tilde\br^i_{[0,\Delta t]} \text{ has at least two jumps} \right\}, 
	$$
	and
	$$
	E_2:=\left\{\exists i \text{ s.t. } R<|\tilde X^i|< R+\Delta t^{-1/3}, \ \sigma_{i,1}<\Delta t, \  \tilde\br^{i,em}_1\in B(R) \right\}, 
	$$
	where $\sigma_{i,1}$ and $\tilde\br^{i,em}_\cdot$ is respectively first transition time and the embedded Markov Chain of $\tilde\br^i_\cdot$. For event $E_1$, it is easy to see that for each $i$
	$$
	\prob\left( \tilde\br^i_{[0,\Delta t]} \text{ has at least two jumps}\right)\le C \Delta t^2,
	$$
	while 
	$$
	N_1=|\{i: \ R<|\tilde X^i|< R+\Delta t^{-1/3}\}|\sim \text{Pois}\left(\pi( R+\Delta t^{-1/3})^2-\pi R^2 \right),
	$$
	which together imply that 
	\bae
	\label{eq_E1}
	P(E_1)\le  C \Delta t^2\left[ \pi( R+\Delta t^{-1/3})^2-\pi R^2  \right]\le C(R\Delta t^{5/3}+\Delta t^{4/3})=Ro(\Delta t).
	\eae
	Finally, to bound the probability of event $E_2$, one may for each integer $n\in [0,\Delta t^{-1/3}]$ define 
	$$
	E_{2,n}:=\left\{\exists i \text{ s.t. } R+n<|\tilde X^i|< R+n+1, \ \sigma_{i,1}<\Delta t, \ \tilde\br^{i,em}_1\in B(R) \right\}.
	$$
	Then one may immediately see that $\prob(E_2)\le \sum_{n=1}^{\lfloor\Delta t^{-1/3} \rfloor}\prob(E_{2,n})$, and that for each such $n$
	$$
	\prob(E_{2,n})\le [\pi(R+n+1)^2-\pi(R+n)^2] [1-\exp(\Delta t)] \max_{x\in B(R+n+1)\setminus B(R+n)}\prob\left(x+\mathcal{W}_1\in B(R)\right)
	$$
	where $\mathcal{W}_t$ is a standard BM. Moreover by a standard estimation of Gaussian kernel,
	$$
	 \max_{x\in B(R+n+1)\setminus B(R+n)}\prob\left(x+\mathcal{W}_1\in B(R)\right)\le \prob(|\mathcal{W}_1|\ge n)\le \exp(-c n^2)
	$$ 
	for some $c>0$. Thus we have 
	$$
		\prob(E_{2,n})\le C(R+n)\exp(-cn^2)\Delta t
	$$
	which in turn implies that 
	\bae
	\label{eq_E2}
	\prob(E_2)\le \sum_{n=1}^{\lfloor\Delta t^{-1/3} \rfloor}\prob(E_{2,n})\le (C_1 R+C_2)\Delta t.
	\eae
	Combining \eqref{eq_63}-\eqref{eq_E2}, we conclude the proof of this lemma. 
\end{proof}

As a direct corollary of Lemma \ref{lem:inf_rate}, we also have that 
\begin{corollary}
	\label{col:exp_dom_waiting}
	 For all $R\ge \pi^{-1/2}$ and random field $\tilde A_R$, stopping time $\Delta \tau$ defined above stochastically dominates an exponential random variable with exception $1/(CR)$.  
\end{corollary}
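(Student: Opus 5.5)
The plan is to pass from the hazard-rate bound of Lemma \ref{lem:inf_rate} to a bound on the survival function of $\Delta\tau$, and then compare with an exponential law. Write $F(t)=\prob(\Delta\tau>t)$. We have $F(0)=1$: the initial field is supported on $B(R)^c$, and a particle cannot reach $B(R)$ without a jump, so $\prob(\Delta\tau\le \Delta t)\to 0$ as $\Delta t\to 0$. By definition, $r(R,t)$ is the right derivative of $-\log F$ at $t$, wherever that derivative exists.

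\textbf{Step 1 (integrated form of the hazard bound).} Instead of relying on differentiability of $F$, I would use the semigroup form of the argument in the proof of Lemma \ref{lem:inf_rate}. By Theorem \ref{thm:kurtz}, conditional on $\{\Delta\tau>t\}$ the free particles at time $t$ form a Poisson point process on $B(R)^c$ with intensity at most $1$. Thinning, together with the independence of the walks, then gives
\[
\prob(\Delta\tau\le t+h\mid \Delta\tau>t)\le \prob(\Delta\tau\le h)\qquad\text{for all } t,h\ge0 .
\]
Hence $F(t+h)\ge F(t)F(h)$, so $F$ is supermultiplicative.

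\textbf{Step 2 (short-time estimate).} The bound $r(R,0)\le CR$, read off from \eqref{eq_63}--\eqref{eq_E2}, says that $1-F(h)\le CRh+o(h)$ as $h\to0$. Equivalently, $-\log F(h)\le CRh+o(h)$.

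\textbf{Step 3 (iteration).} Fix $t>0$ and split $[0,t]$ into $n$ pieces of length $t/n$. Supermultiplicativity gives
\[
F(t)\ge F(t/n)^n=\exp\bigl(n\log F(t/n)\bigr)\ge \exp\bigl(-CRt-n\,o(t/n)\bigr).
\]
Letting $n\to\infty$ yields $F(t)\ge e^{-CRt}$. This is exactly the statement that $\Delta\tau$ stochastically dominates an exponential random variable with mean $1/(CR)$, i.e.\ with rate $CR$.

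\textbf{Main obstacle.} The delicate point is Step 1, namely that the conditional comparison holds for a finite increment $h$ and not only infinitesimally. This needs the full conditional-Poisson structure from Theorem \ref{thm:kurtz}, applied to the field started on $B(R)^c$ with no pool growth. Two facts then combine. First, a Poisson field of intensity at most $1$ on $B(R)^c$ is dominated by a thinning of the intensity-$1$ field, and a thinned field is less likely to produce an entrance into $B(R)$. Second, by the Markov property of the walks, the conditional law of $\Delta\tau-t$ is the hitting time for such a thinned field. Once this is established, Steps 2 and 3 are routine.
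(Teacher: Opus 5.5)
Your proof is correct. It rests on the same two facts as the paper's argument but assembles them differently.

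The paper treats the corollary as immediate from Lemma \ref{lem:inf_rate}. The hazard rate obeys $r(R,t)\le r(R,0)\le CR$ for every $t$, and integrating $-\frac{d}{dt}\log\prob(\Delta\tau>t)=r(R,t)$ over $[0,t]$ gives $\prob(\Delta\tau>t)\ge e^{-CRt}$. That route passes through the infinitesimal rate at every time. It implicitly uses that those limits exist and that $t\mapsto-\log\prob(\Delta\tau>t)$ is regular enough to be recovered from them.

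You instead keep the finite-increment statement that the proof of the lemma establishes before taking limits: the conditional law of $\Delta\tau-t$ given $\Delta\tau>t$ dominates the law of $\Delta\tau$. You read this as supermultiplicativity $F(t+h)\ge F(t)F(h)$, and you use the short-time estimate only as $h\to0$. The iteration $F(t)\ge F(t/n)^n$ then replaces the integration. It needs no regularity of $F$ away from $0$, and it would work with just a $\limsup$ bound on $(1-F(h))/h$.

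The paper's route is shorter once the lemma is granted in its infinitesimal form. Yours makes the passage from a hazard bound to a distributional bound self-contained.

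An equivalent way to see your argument: $F(t)=e^{-\Lambda(t)}$ with $\Lambda(t)=\int_{B(R)^c}\prob(\br^x_{[0,t]}\cap B(R)\neq\emptyset)\,dx$. Your Step 1 is then just subadditivity of $\Lambda$. Your Step 3 is the elementary bound $\Lambda(t)\le t\limsup_{h\to0}\Lambda(h)/h$ for subadditive functions.
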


With the corollary above and Theorem \ref{thm:kurtz}, one can now upper bound each round of engulfing procedure by i.i.d. critical ones, while at the same time lower bound each waiting time between arrivals by exponential r.v.'s, and construct the following process which stochastically dominates our engulfing pool model at criticality.  

\begin{itemize}
	\item Let $\{X_n\}_{n=0}^\infty$ be an i.i.d. sequence of random variables with the distribution of the total progeny of a critical GW tree with offspring distribution $\text{Pois}(1)$. 
	
	\item Let $\{T_n\}_{n=0}^\infty$ be an i.i.d. sequence of random exponential variables with $\lambda=1$. 
	
	\item For each $n\ge 0$, let $R_n=\sqrt{\frac{\sum_{i=0}^n X_i}{\pi}}$, and 
	$$
	\tau_0=0, \ \tau_n=\sum_{k=0}^{n-1} \frac{T_k}{C R_k}. \ \forall n\ge 1,
	$$
	where $C$ is the constant in Corollary \ref{col:exp_dom_waiting}. Now for each $t\ge 0$, let $n_t=\sup\{n: \ \tau_n\le t\}$, and let $\tilde \CE_t=R_{n_t}$. 
\end{itemize}

{By Corollary \ref{col:exp_dom_waiting} and Theorem \ref{thm:kurtz}, it is straightforward to see that $\tilde \CE_t\gtrsim \CE_t$.} Note that given $\{X_n\}_{n=1}^\infty$ , the distribution of $\frac{T_n}{R_n}, \ n\ge 1$ is an sequence of independent exponential random variables with $\lambda_n=R_n$, and that $\tilde \CE_t<\infty, \forall t<\infty$ is equivalent to $\tau_n\to \infty$. The following lemma shows that $\tau_n$ is a.s. equivalent to $\sum_{k=0}^{n-1} \frac{1}{R_k}$. 

\begin{lemma}
	\label{lem:equi}
	Let $T_n, \ n\ge 1$ be a sequence of independent exponential random variables with $E[T_n]=c_n$ such that $c_1=\pi^{1/2}$, $c_n\downarrow 0$, while $\sum_{n=1}^\infty c_n=\infty$. Then we have 
	$$
	\frac{\sum_{k=1}^n T_k}{\sum_{k=1}^n c_k}\overset{a.s.}{\rightarrow} 1. 
	$$
\end{lemma}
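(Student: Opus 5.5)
We will prove the statement by a second moment (Chebyshev) estimate along a sparse subsequence, combined with monotonicity to interpolate between its terms. Write $S_n=\sum_{k=1}^n T_k$ and $C_n=\sum_{k=1}^n c_k$, so that $E[S_n]=C_n$. Since $T_k$ is exponential with mean $c_k$, we have $\mathrm{Var}(T_k)=c_k^2$, and by independence
\[
\mathrm{Var}(S_n)=\sum_{k=1}^n c_k^2\le c_1\sum_{k=1}^n c_k=\pi^{1/2}C_n ,
\]
where we used $c_k\le c_1$, which follows from $c_n\downarrow 0$. Chebyshev's inequality then gives
\[
\prob\left(|S_n-C_n|>\epsilon C_n\right)\le \frac{\pi^{1/2}}{\epsilon^2 C_n}.
\]
Because $C_n\to\infty$, this already yields convergence in probability. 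To upgrade to almost sure convergence we pass to a subsequence along which these bounds are summable.

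\textbf{Subsequence.} Define $n_j=\inf\{n: C_n\ge 2^j\}$. This is finite because $\sum c_n=\infty$. Since each increment satisfies $c_n\le c_1$, we have $2^j\le C_{n_j}< 2^j+c_1$. Hence
\[
\sum_j \prob\left(|S_{n_j}-C_{n_j}|>\epsilon C_{n_j}\right)\le \sum_j \frac{\pi^{1/2}}{\epsilon^2 2^j}<\infty ,
\]
and Borel--Cantelli gives $S_{n_j}/C_{n_j}\to 1$ a.s. We then take a countable intersection over rational $\epsilon$.

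\textbf{Interpolation.} For $n_j\le n\le n_{j+1}$, both $S_n$ and $C_n$ are nondecreasing, so
\[
\frac{S_{n_j}}{C_{n_{j+1}}}\le \frac{S_n}{C_n}\le \frac{S_{n_{j+1}}}{C_{n_j}} .
\]
The ratio $C_{n_{j+1}}/C_{n_j}$ lies in $[2-o(1),\,2+o(1)]$ and does not tend to $1$, so dyadic blocks are too coarse for this step. We therefore refine the subsequence. For fixed $\theta>1$ we use $n_j=\inf\{n:C_n\ge \theta^j\}$. The Borel--Cantelli series is still geometric, hence summable, and the bounded increments $c_n\le c_1$ give $C_{n_{j+1}}/C_{n_j}\to\theta$. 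Interpolating yields
\[
\theta^{-1}\le \liminf_n \frac{S_n}{C_n}\le \limsup_n \frac{S_n}{C_n}\le \theta \quad\text{a.s.}
\]
Letting $\theta\downarrow 1$ along a countable sequence concludes the proof.

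The main point to check is the interpolation step: we need the bounded increments ($c_n\le c_1$) so that $C_{n_j}$ tracks $\theta^j$ up to an additive constant, which makes $C_{n_{j+1}}/C_{n_j}\to\theta$. Beyond that, the only property of the exponential distribution used is that its variance is $c_k^2$.
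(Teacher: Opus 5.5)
Your proof is correct and follows essentially the same route as the paper: Chebyshev with $\mathrm{Var}(S_n)\le \pi^{1/2}C_n$ (using only $c_k\le c_1$) along a subsequence defined by thresholds on $C_n$, then Borel--Cantelli, then a monotone sandwich between consecutive subsequence terms. The only difference is cosmetic. The paper uses polynomial thresholds $N^4$ with shrinking tolerance $1/N$, so consecutive threshold ratios $(N+1)^4/N^4$ tend to $1$ directly. You instead use geometric thresholds $\theta^j$ with a fixed $\epsilon$ and then let $\theta\downarrow 1$ along a countable sequence.
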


\begin{proof}
	For each integer $N$, define
	$$
	K_N=\inf\left\{k: \sum_{i=1}^k c_i\ge N^4\right\}.
	$$
	By definition, it is immediate to see that $K_N<K_{N+1}$ for all sufficiently large $N$. Then noting that $\var(T_k)=c_k^2\le \pi^{1/2}c_k$, by Chebyshev inequality, 
	\bae
	\label{Ch_1}
	\prob\left(\left|\frac{\sum_{j=1}^{K_N} T_j}{\sum_{j=1}^{K_N} c_j}-1\right|\ge \frac{1}{N} \right)\le \frac{N^2\sum_{j=1}^{K_N} c_j^2}{(\sum_{j=1}^{K_N} c_j)^2}\le \frac{\pi^{1/2}}{N^2}
	\eae
	which is summable. Thus, we have
	$$
	Y_N=\frac{\sum_{j=1}^{K_N} T_j}{\sum_{j=1}^{K_N} c_j}\overset{a.s.}{\rightarrow} 1. 
	$$
	At the same time for all $K_N<n<K_{N+1}$ we have
	$$
	\frac{Y_N N^4}{(N+1)^4}\le \frac{\sum_{k=1}^n T_k}{\sum_{k=1}^n c_k}\le \frac{Y_{N+1}[(N+1)^4+\pi^{1/2}]}{N^4}
	$$
	which concludes the proof of this lemma. 
\end{proof}

With Lemma \ref{lem:equi}, to prove the desired non-explosion, it suffices to show the a.s. divergence of $\sum_{k=0}^{n-1} \frac{1}{R_k}$. 

\

\

%Using \cite{otter1949multiplicative,dwass1969total,bingham1987regular,janson2012simply}
\begin{lemma}
	Let $X_1,X_2,\dots$ be i.i.d.\ copies of the total progeny of a critical Galton--Watson tree where the number of offspring is distributed as $\text{Pois}(1)$. Set $S_n:=X_1+\cdots+X_n$. Then
	\[
	\sum_{n=1}^\infty \frac{1}{\sqrt{S_n}}=\infty \qquad \text{almost surely}.
	\]
\end{lemma}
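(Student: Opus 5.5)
The plan is to exploit the heavy tail of the total progeny. For a critical Poisson(1) Galton--Watson tree, the total progeny $X$ has the Borel distribution $\prob(X=k)=e^{-k}k^{k-1}/k!\sim (2\pi)^{-1/2}k^{-3/2}$. Hence $X$ lies in the domain of attraction of a positive $1/2$-stable law, and $S_n$ grows like $n^2$: we will have $S_n/n^2\Rightarrow L$ for a Lévy (one-sided $1/2$-stable) variable $L$. Heuristically, $\sum 1/\sqrt{S_n}\approx\sum 1/n$ diverges, but only logarithmically. So the proof has to control how often $S_n$ is much larger than $n^2$, and convergence in distribution alone will not suffice.

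First step. I will show that the divergence event is a tail-type event. Since $S_n-S_m$ for $n>m$ is a sum of $X_{m+1},\dots,X_n$, and $\sqrt{S_n}\le\sqrt{S_m}+\sqrt{S_n-S_m}$, divergence of $\sum_n 1/\sqrt{S_n}$ is unaffected by changing finitely many $X_i$. It is therefore an exchangeable event, so by Hewitt--Savage it has probability $0$ or $1$. It then suffices to show it has positive probability.

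Second step. Work along dyadic blocks $n\in[2^j,2^{j+1})$. On each block, $\sum_{n=2^j}^{2^{j+1}-1}1/\sqrt{S_n}\ge 2^j/\sqrt{S_{2^{j+1}}}$. Define $Z_j:=2^{j}/\sqrt{S_{2^{j+1}}}$; it converges in law to $\frac12 L^{-1/2}$, a positive nondegenerate variable. So there is $c>0$ with $\prob(Z_j\ge c)\ge c$ for all large $j$. We need $Z_j\ge c$ to happen for infinitely many $j$ with positive probability.

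By reverse Fatou, $\prob(Z_j\ge c \text{ i.o.})\ge\limsup_j\prob(Z_j\ge c)\ge c>0$. On that event the series has infinitely many block contributions each at least $c$, so it diverges. Combined with the zero-one law from the first step, the series diverges almost surely.

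Main obstacle. The delicate points are the following.
\begin{itemize}
\item Justify the tail asymptotics of the Borel law. This follows from Stirling's formula, or from Dwass/Otter: $\prob(X=k)=\frac1k\prob(\mathrm{Pois}(k)=k-1)$. These asymptotics give the stable limit $S_n/n^2\Rightarrow L$ with $L>0$ a.s.
\item Check carefully that the event is exchangeable. Permuting finitely many indices changes each $S_n$ only for finitely many $n$, which does not affect divergence.
\end{itemize}

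If I want to avoid the stable limit theorem, I would instead use a truncation bound. Take $\prob(S_n\ge Kn^2)\le n\prob(X\ge Kn^2)+\prob\bigl(\sum_{i\le n}X_i\mathbbm{1}_{\{X_i<Kn^2\}}\ge Kn^2\bigr)$. The first term is $O(K^{-1/2})$ from the tail $\prob(X\ge m)\sim cm^{-1/2}$. The second is $O(K^{-1/2})$ by Markov's inequality, since the truncated mean is $O(\sqrt{K}\,n)$. This yields $\prob(Z_j\ge c)\ge c$ directly, and that is all the argument needs.
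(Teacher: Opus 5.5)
Your proof is correct, and it takes a different route from the paper's. Both arguments rest on the same input, the uniform tightness $\prob(S_n\le Mn^2)\ge p_0$ for all $n$. The paper obtains it from the stable limit theorem; your truncation bound $\prob(S_n\ge Kn^2)=O(K^{-1/2})$ is a self-contained substitute. Both also reduce to dyadic blocks using the monotonicity of $S_n$.

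The two proofs differ in how they pass from ``each block is good with probability $\ge p_0$'' to ``infinitely many blocks are good almost surely.'' The paper does this by hand:
\begin{itemize}
\item it passes to the sparse subsequence $2^{2^n}$ and forms genuinely independent block sums $L_n$;
\item it uses a union bound and Borel--Cantelli to show $S_{2^n}<2^{(3+\delta)n}$ eventually, so that earlier blocks contribute at most $1$ after normalization;
\item it applies the second Borel--Cantelli lemma to the independent events $\{L_n\le M\}$.
\end{itemize}

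You instead get $\prob(Z_j\ge c\text{ i.o.})\ge\limsup_j\prob(Z_j\ge c)>0$ directly from $\prob(\limsup A_j)\ge\limsup\prob(A_j)$, with no independence needed. You then upgrade positive probability to probability one with a $0$--$1$ law. This is shorter and avoids the overlap estimate entirely. The cost is that you must check the event is tail or exchangeable, and you do.

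Your justification can be stated more cleanly. For $n>m$, using $X_i\ge 1$,
\[
\frac{1}{\sqrt{S_n-S_m}}\;\ge\;\frac{1}{\sqrt{S_n}}\;\ge\;\frac{1}{(1+\sqrt{S_m})\sqrt{S_n-S_m}}.
\]
So divergence is $\sigma(X_{m+1},X_{m+2},\dots)$-measurable for every $m$, and Kolmogorov's $0$--$1$ law already suffices. Exchangeability is even more immediate: permuting $X_1,\dots,X_m$ leaves $S_n$ unchanged for $n\ge m$.

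The paper's route yields slightly more, namely that good blocks along its subsequence occur with positive density. That extra information is not needed for the lemma.
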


\begin{proof}
	Note that by classic theory of Galton-Watson process,  e.g. \cite{dwass1969total,bingham1987regular}, the total number of progeny of such a critical GW process is distributed as follows:
	$$
	\prob(X_1=n)=\frac{\prob\left(\text{Pois}(n-1)=n-2\right)}{n-1}=\frac{e^{-(n-1)}(n-1)^{-(n-2)}}{(n-1)!}.
	$$
	Then by Stirling's formula, $\prob(X_1=n)\sim C n^{-3/2}$, which implies that 
	\bae
	\label{stable_law_p}
	\prob(X_i\ge n)=\sum_{k=n}^\infty \prob(X_1=k)\sim C n^{-1/2}.
	\eae
	Now for $a_n=\inf\{x: \ \prob(|X_1|> x)\le \frac{1}{n}\}$, and $b_n=n E[X_1 \ind_{|X_1|\le a_n}]$, it is easy to see that $a_n\sim C n^2$, while 
	$$
	b_n=n\sum_{k=1}^{a_n} k\prob(X_1=k)\le =n\sum_{k=1}^{(C+\epsilon)n^2} k\times (C+\epsilon) k^{-3/2}\le (C+\epsilon)^{3/2} n^2
	$$
	for all sufficiently large $n$. Similarly we also have $b_n\ge (C-\epsilon)^{3/2} n^2$ for all sufficiently large $n$. Thus by Theorem 3.8.2 of \cite{durret2019probability}, we have $\frac{S_n}{n^2}\Rightarrow Y$ where $Y\ge 0$ is some non-degenerate random variable. Thus there is some $M<\infty$ and $p_0>0$ independent to $n$ such that
	$$
	\prob\left(\frac{S_n}{n^2}\le M \right)\ge p_0
	$$
	for all integer $n\ge 1$.
	
	Now noting that $X_i$'s are all positive which guarantees that $1/\sqrt{S_n}$ is decreasing, it suffices to show 
	\bae
	\label{eq_condensation}
	\sum_{n=1}^\infty M_n:=\sum_{n=1}^\infty \frac{2^n}{\sqrt{S_{2^n}}}=\infty,
	\eae
	where we use the convention $M_0=0$. And in order to show \eqref{eq_condensation}, one only needs to prove that there is some constant $c>0$ such that with probability one $M_n\ge c$ infinitely often. To see this, we first note that for any $\delta\in (0,1)$ and integer $n$,
	$$
	\prob\left(S_{2^n}\ge 2^{(3+\delta)n}\right)\le \sum_{k=1}^{2^n} \prob\left(X_k\ge 2^{(2+\delta)n}\right)\le C2^{-\delta n/2}
	$$
	which is summable. Thus, we have with probability one $S_{2^n}< 2^{(3+\delta)n}$ for all sufficiently large $n$. We denote such event by $E$.

	Now for subseqeunce $k_n=2^n$ (with $k_0=0$), define a sequence of independent random variables
	$$
	L_n=\frac{\sum_{j=2^{k_{n-1}}+1}^{2^{k_n}} X_j}{2^{2k_n}}
	$$
	which by definition satisfies that $L_n\le \frac{S_{2^{k_n}}}{2^{2k_n}}$ and that
	$$
	\frac{S_{2^{k_n}}}{2^{2k_n}}=L_n+\frac{S_{2^{k_{n-1}}}}{2^{2k_n}}.
	$$
	Now recalling that under event $E$, $S_{2^n}< 2^{(3+\delta)n}$ for all large enough $n$, we have 
	\bae
	\label{eq_previous_term}
	\frac{S_{2^{k_{n-1}}}}{2^{2k_n}}\le \frac{2^{(3+\delta)k_{n-1}}}{2^{4k_{n-1}}}\le 1.
	\eae
	And for each $L_n$, 
	$$
	\prob(L_n\le M)\ge \prob\left(\frac{S_{2^{k_n}}}{2^{2k_n}}\le M \right)\ge p_0
	$$
	and by independence of $L_n$ one immediately have that $L_n\le M$ a.s. has a positive density, and certainly infinitely often. This together with \eqref{eq_previous_term} show that we with probability one have $\frac{S_{2^{k_n}}}{2^{2k_n}}\le M+1$ happens i.o., which concludes the proof. 
\end{proof}

In fact, the proof above has also given an iterated exponential upper bound on the growth rate of $\CE_t$ at criticality: 

\begin{corollary}
	\label{col_iterated_exp}
	There is a constant $C<\infty$ such that asymptotically a.s. we have $\CE_t\le 2^{\exp(Ct)}$ as $t\to \infty$. 
\end{corollary}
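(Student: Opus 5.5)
We argue through the dominating process $\tilde\CE_t=R_{n_t}$ constructed above. By Corollary \ref{col:exp_dom_waiting} and Theorem \ref{thm:kurtz}, $\tilde\CE_t\gtrsim\CE_t$, so an almost sure upper bound on $\tilde\CE_t$ transfers to $\CE_t$. Since $\tilde\CE_t=R_{n_t}=\sqrt{S_{n_t}/\pi}$, it suffices to prove two things: a lower bound on the jump times $\tau_n$, which forces $n_t$ to be at most exponential in $t$; and an upper bound on $S_n$ polynomial in $n$. Together these give $\tilde\CE_t\le \mathrm{poly}(e^{Ct})$. The stated bound $2^{\exp(Ct)}$ is much weaker than this, so we may be generous at every step.

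\textbf{Step 1: growth of $S_n$.} In the proof of the preceding lemma we showed, via \eqref{stable_law_p}, that
\[
\prob(S_{2^m}\ge 2^{(3+\delta)m})\le C2^{-\delta m/2}.
\]
By Borel--Cantelli, almost surely $S_{2^m}<2^{(3+\delta)m}$ for all large $m$. Monotonicity of $S_n$ then gives $S_n\le 2^{4}n^{4}$ for all large $n$. Consequently $R_k\le C'k^{2}$ eventually, and so $1/R_k\ge c/k^2$.

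\textbf{Step 2: lower bound on $\tau_n$.} We need a quantitative version of $\tau_n\to\infty$. The bound $1/R_k\ge c/k^2$ from Step 1 is summable, so it is useless here. Instead we use the condensation argument from the proof of the preceding lemma. For blocks $k\in(2^{m-1},2^m]$, monotonicity gives
\[
\sum_{k\in(2^{m-1},2^m]}\frac{1}{R_k}\ge \frac{2^{m-1}}{R_{2^m}}\asymp \frac{M_m}{2}.
\]
That proof showed $M_m\ge c$ for infinitely many $m$, with a positive density of such $m$ along the subsequence $m=2^j$. By the law of large numbers for the independent indicators $\{L_j\le M\}$, the number of $j\le J$ with $L_j\le M$ is at least $p_0J/2$ for large $J$. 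Hence $\sum_{k<2^{2^J}}1/R_k\ge cJ$ eventually.

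To pass to $\tau_n$, apply Lemma \ref{lem:equi} conditionally on $\{X_n\}$, with $c_k=1/(CR_k)$. We have $c_k\downarrow0$ and $\sum c_k=\infty$, so $\tau_n\ge \frac12\sum_{k<n}c_k$ eventually. Combining the two bounds gives $\tau_{2^{2^J}}\ge c'J$ for all large $J$.

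\textbf{Step 3: inversion.} Given $t$, take $J=\lceil t/c'\rceil+1$. Then $\tau_{2^{2^J}}>t$, so $n_t<2^{2^J}$. Step 1 now yields
\[
\tilde\CE_t\le C'n_t^{2}\le C'2^{2^{J+1}}\le 2^{\exp(Ct)}
\]
for large $t$, for a suitable constant $C$.

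The main obstacle is Step 2. A linear-in-$J$ lower bound requires that good blocks $j$ occur with positive density, not merely infinitely often. It also requires that the cross term \eqref{eq_previous_term} stay bounded for all large $j$, not only along a subsequence. Both facts should follow from independence of the $L_j$ and from the event $E$. The remaining care is making the conditional use of Lemma \ref{lem:equi} rigorous, since its rates depend on the realization of $\{X_n\}$.
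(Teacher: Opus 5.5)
Your Steps 1--3 are correct and are exactly the unpacking the paper intends when it says the condensation argument ``has also given'' this bound: independence of the $L_j$ gives good blocks with positive density along $k_j=2^j$, the cross term \eqref{eq_previous_term} is controlled on $E$ for every large $j$ (since $2k_j=4k_{j-1}$), so $\tau_{2^{2^J}}\gtrsim J$, and $S_n\le 16n^4$ turns $n_t<2^{2^{Ct}}$ into the stated bound. The only correction is to your opening framing. Step 2 yields $n_t$ doubly exponential rather than exponential in $t$, because good dyadic scales are only shown to occur along the sparse subsequence $m=2^j$; so the claim $\tilde\CE_t\le\mathrm{poly}(e^{Ct})$ is not established, and the iterated exponential is exactly what this method gives rather than generous slack.
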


\section{Sub-critical growth rate (Theorem \ref{thm:growth} (1)) }

\begin{proof}[{Proof of the lower bound in Theorem \ref{thm:growth} (1)}]

\

To prove the lower bound, we first note that $\CE_t$ is monotonically non-decreasing. So it suffices to show that for all $\epsilon>0$
\beq
\label{eq_lower_dis}
\prob\left(\CE_k\le k^{\frac{1}{2}}\log^{-\frac{1+\epsilon}{2}}k, \ i.o.\right)=0. 
\eeq
Define r.v. $\hat N_k$ to be the number of particles that have ever entered $B(r_0)$ by time $k$. Then by definition of the engulfing model, one always has $\hat N_k\le \pi\CE_k^2$. Thus it suffices to show that 
\beq
\label{eq_num_enter}
\sum_{k=1}^\infty \prob\left(\hat N_k\le \pi k\log^{-1-\epsilon}k \right)<\infty.
\eeq
Now note that $\hat N_k$ forms a Poisson r.v. with intensity 
$$
\lambda_k=\lambda \left(1+\iint_{B(r_0)^c}\prob_{(x,y)}\left(\tau_{B(r_0)<k} \right) dxdy \right).
$$
Thus the proposed lower bound is immediate once we prove the following lemma:

\begin{lemma}
	\label{lem_integrand}
	For all $\epsilon>0$, there is some $c>0$ and $k_0<\infty$ such that for all $k\ge k_0$ and $(x,y)\in B\left(k^{\frac{1}{2}}\log^{-\frac{\epsilon}{3}}k\right)$, there is
	$$
	\prob_{(x,y)}\left(\tau_{B(r_0)}<k\right)\ge \frac{c}{\log k}.
	$$
\end{lemma}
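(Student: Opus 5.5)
We will prove the lemma by a second-moment (occupation time) argument for the embedded Gaussian random walk, which is the standard way to get the $1/\log k$ hitting estimate in two dimensions. Write $\br_t=\sum_{i=1}^{N(t)}\nu_i$ and let $S_m=\sum_{i=1}^m\nu_i$ be the embedded discrete-time walk with i.i.d.\ $N(0,I_2)$ steps. Since $N(k)\ge k/2$ with probability $1-e^{-ck}$, it suffices to show that for $|z|\le k^{1/2}\log^{-\epsilon/3}k$,
\[
\prob_z\left(\exists m\le k/2:\ S_m\in B(r_0)\right)\ge \frac{2c}{\log k}.
\]
Here $B(r_0)$ is a ball of fixed radius $r_0=1/\sqrt\pi$.

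\textbf{Occupation counts.} Let $Z=\sum_{m=1}^{k/2}\mathbbm{1}_{\{S_m\in B(r_0)\}}$, so that $\prob_z(Z>0)\ge (\E_z Z)^2/\E_z Z^2$. For the first moment we use the Gaussian kernel: $\prob_z(S_m\in B(r_0))\ge c\, m^{-1}e^{-|z|^2/m}$. Summing over $m\in[|z|^2,k/2]$ gives
\[
\E_z Z\ge c\log\frac{k}{2|z|^2}\ge c'\,\epsilon\log\log k,
\]
using $|z|^2\le k\log^{-2\epsilon/3}k$. For $z$ with $|z|\le\sqrt{k}$ we need the bound $\log(k/|z|^2)\ge c\log\log k$ only up to constants; if $|z|$ is much smaller the bound is even better.

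For the second moment we apply the Markov property at each visit. Starting from any point of $B(r_0)$,
\[
\E_w Z\le \sum_{m\le k}\sup_{w\in B(r_0)}\prob_w(S_m\in B(r_0))\le C\sum_{m\le k}\min(1,m^{-1})\le C\log k .
\]
Hence $\E_z Z^2\le 2\,\E_z Z\cdot C\log k$, and therefore
\[
\prob_z(Z>0)\ge \frac{\E_z Z}{2C\log k}\ge \frac{c}{\log k}.
\]
In fact only $\E_z Z\ge c>0$ is needed here, which makes the restriction on $|z|$ easy to satisfy.

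\textbf{Main obstacle.} The delicate point is the first-moment lower bound uniformly in $z$ up to $k^{1/2}\log^{-\epsilon/3}k$. We need $\sum_{m\le k/2} m^{-1}e^{-|z|^2/m}\gtrsim 1$, and this holds because the range $m\in[|z|^2,k/2]$ is nonempty and has logarithmic length at least $\log(\tfrac12\log^{2\epsilon/3}k)$, which exceeds a positive constant for $k\ge k_0$. This is exactly why the radius carries the $\log^{-\epsilon/3}$ factor. We will also check the local Gaussian lower bound $\prob_z(S_m\in B(r_0))\ge c\,\mathrm{Vol}(B(r_0))(2\pi m)^{-1}e^{-(|z|+r_0)^2/(2m)}$. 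This is exact, since $S_m\sim N(z,mI_2)$, so no local CLT is required.

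Finally, the case $z\in B(r_0)$ is trivial because then $\tau_{B(r_0)}=0$. Combining the three pieces with the Poisson tail bound for $N(k)$ yields the lemma.
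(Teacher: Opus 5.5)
Your argument is correct, and it takes a different route from the paper's. The paper works with the Brownian motion interpolating the embedded chain. It writes $\prob_{(x,y)}(\tau_{B(r_0)}<k)\ge \prob_{(x,y)}(\tau_{B(r_0)}<\bar\tau_k)-\prob_{(x,y)}(\bar\tau_k>k)$, where $\bar\tau_k$ is the exit time of $B(k^{1/2}\log^{-\epsilon/4}k)$. The second term is made negligible compared with $1/\log k$ by iterating, via the strong Markov property, a uniform probability of exiting within time $k\log^{-\epsilon/2}k$. The first term is bounded below by optional stopping for the martingale $\log|B_t|$: hitting $B(r_0/2)$ before leaving the outer ball has probability at least $c\log\log k/\log k$. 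This is multiplied by a constant probability that the path then stays in $B(r_0)$ until the next integer time, so that the discrete chain actually lands in the ball.

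You replace this potential-theoretic computation by the second-moment method for the visit count $Z$, using only the exact laws $S_m\sim N(z,mI_2)$. The Green's function bound $\sup_{w\in B(r_0)}\mathbb{E}_w Z\le C\log k$ produces the $1/\log k$. The time constraint is built in by summing only over $m\le k/2$ and using $N(k)\ge k/2$. So you need neither an exit-time estimate nor the transfer from continuous Brownian hitting to integer-time hitting. Under the stated hypothesis both arguments give the same order $\log\log k/\log k$.

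One correction to your commentary: the factor $\log^{-\epsilon/3}k$ is not what makes your first moment work. Summing only over $m\in[k/4,k/2]$ already gives $\mathbb{E}_z Z\ge c$ for every $|z|\le\sqrt{k}$. Your method therefore proves the bound on all of $B(\sqrt{k})$. Inserted into the integral for $\lambda_k$, this would even remove the $\epsilon$-loss, up to a constant, in the lower bound of Theorem~\ref{thm:growth}(1). The logarithmic margins are genuinely needed only in the paper's argument, to separate the starting ball from the exit ball and to make exit before time $k$ overwhelmingly likely.

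A minor fix: in the display $\mathbb{E}_z Z\ge c\log(k/(2|z|^2))$, replace $|z|^2$ by $\max(|z|^2,1)$, since $\mathbb{E}_z Z$ is at most of order $\log k$.
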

\begin{proof}
	Note that the random walk $X_t$ may typically move a distance of $k^{\frac{1}{2}}$ over time $k$. Define stopping time
	$$
	\bar \tau_k=\tau_{\partial B(k^{\frac{1}{2}}\log^{-\frac{\epsilon}{4}}k)}.
	$$
	Then
	$$
		\prob_{(x,y)}\left(\tau_{B(r_0)}<k\right)\ge \prob_{(x,y)}\left(\tau_{B(r_0)}<\bar \tau_k\right)-\prob_{(x,y)}\left(\bar \tau_k>k \right).
	$$
	For the second term, note that there exists an absolute constant $p_0>0$ such that for all $(x,y)\in B(k^{\frac{1}{2}}\log^{-\frac{\epsilon}{4}}k)$
	$$
	\prob_{(x,y)}\left(\bar\tau_k\le k\log^{-\frac{\epsilon}{2}}k  \right)\ge p_0.
	$$
	Thus by strong Markov property, for any $(x,y)\in B\left(k^{\frac{1}{2}}\log^{-\frac{\epsilon}{3}}k\right)$, 
	$$
	\prob_{(x,y)}\left(\bar \tau_k>k \right)\le (1-p_0)^{\log^{\frac{\epsilon}{2}}k}=s.e.(\log k)\ll \log k. 
	$$
	Thus we only need to control the first term and prove that 
	\beq
	\label{eq_option}
	\prob_{(x,y)}\left(\tau_{B(r_0)}<\bar \tau_k\right)\ge \frac{c}{\log k}.
	\eeq
	To show \eqref{eq_option}, recalling the definition of continuous random walk $\br_t=\sum_{i=1}^{N(t)} v_i$, its embedding chain is identically distributed as a standard Brownian motion at integer time points. Thus
	$$
	\prob_{(x,y)}\left(\tau_{B(r_0)}<\bar \tau_k\right)\ge \prob_{(x,y)}\left(\hat \tau_{\frac{r_0}{2}}<\hat \tau_{k^{\frac{1}{2}}\log^{-\frac{\epsilon}{4}}k} \right)\max_{(x,y)\in B(\frac{r_0}{2})} \prob_{(x,y)}\left(\max_{t\le 1}|B_t|\le r_0 \right)
	,$$
	where $\hat \tau_r$ is the stopping time that a standard Brownian motion first hits $\partial B(R)$. 
	
	For fixed $r_0=\sqrt{1/\pi}$, it is immediate that the second term in the inequality above is a strictly positive constant. And for the first one, note that for 1 standard Brownian motion $B_t, \ t\ge 0$ in $\BR^2$, $\log|B_t|$ forms a martingale by stopping time $\hat \tau_{\frac{r_0}{2}}\wedge \hat \tau_{k^{\frac{1}{2}}\log^{-\frac{\epsilon}{4}}k}$. Thus by Doob's optional stopping theorem, 
	we have for all $(x,y)\in B\left(k^{\frac{1}{2}}\log^{-\frac{\epsilon}{3}}k\right)$
	\begin{align*}
		\log\left(k^{\frac{1}{2}}\log^{-\frac{\epsilon}{3}}k \right)\ge \log\sqrt{x^2+y^2}&=\log\left(\frac{r_0}{2}\right)\prob_{(x,y)}\left(\hat \tau_{\frac{r_0}{2}}<\hat \tau_{k^{\frac{1}{2}}\log^{-\frac{\epsilon}{4}}k} \right)\\
		&+\log\left(k^{\frac{1}{2}}\log^{-\frac{\epsilon}{4}}k \right)\left[1-\prob_{(x,y)}\left(\hat \tau_{\frac{r_0}{2}}<\hat \tau_{k^{\frac{1}{2}}\log^{-\frac{\epsilon}{4}}k} \right) \right].
	\end{align*}
	Thus we have 
	$$
	\prob_{(x,y)}\left(\hat \tau_{\frac{r_0}{2}}<\hat \tau_{k^{\frac{1}{2}}\log^{-\frac{\epsilon}{4}}k} \right)\ge \frac{c\log\log k}{\log k}\ge \frac{1}{\log k}
	$$
	for all sufficiently large $k$, which concludes the proof of this lemma.
\end{proof}

Now back to the proof of the sub-critical lower bound, recall that 
$$
\lambda_k=\lambda \left(1+\iint_{B(r_0)^c}\prob_{(x,y)}\left(\tau_{B(r_0)<k} \right) dxdy \right).
$$
By Lemma \ref{lem_integrand}, we have the integrand is uniformly bounded from below by $\frac{c}{\log k}$ in $B\left(k^{\frac{1}{2}}\log^{-\frac{\epsilon}{3}}k\right)\setminus B(r_0)$. Thus
$$
\lambda_k\ge \pi k\log^{-\frac{2\epsilon}{3}}k \times \frac{c}{\log k}\gg k\log^{-(1+\epsilon)}k.
$$ 
Thus by large deviation estimate of Poisson r.v., one immediately has \eqref{eq_num_enter} and thus conclude of the proof of the lower bound in Theorem \ref{thm:growth} (1). 

\end{proof}

In fact, we have also shown the following quantitative estimate: 

\begin{corollary}
\label{col_low}
For any $\epsilon>0$, one always has 
$$
\prob\left(\hat N_R\le R^{-1-\epsilon} \right)\le \prob\left(\hat N_R\le \pi R\log^{-1-\epsilon}R \right)=s.e.(R)
$$
for all $R$ large enough. 
\end{corollary}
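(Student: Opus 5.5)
The first inequality is a monotonicity statement, and the second is a Poisson lower-tail bound that reuses the computation just done for \eqref{eq_num_enter}, with the time index $k$ replaced by $R$.

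\textbf{Step 1 (first inequality).} For all $R$ large enough we have $R^{-1-\epsilon}\le \pi R\log^{-1-\epsilon}R$. Hence the event $\{\hat N_R\le R^{-1-\epsilon}\}$ is contained in $\{\hat N_R\le \pi R\log^{-1-\epsilon}R\}$, and the first inequality follows. The remaining work is the second bound.

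\textbf{Step 2 (Poisson structure and mean).} By the Marking Theorem \cite[Section 5.2]{kingman1992poisson}, the particles of $A_0$ whose walks visit $B(r_0)$ before time $R$ form a thinning of the Poisson point process $A_0$. The particles starting inside $B(r_0)$ are counted trivially. Therefore $\hat N_R$ is Poisson with mean
$$\lambda_R=\lambda\Big(1+\iint_{B(r_0)^c}\prob_{(x,y)}(\tau_{B(r_0)}<R)\,dx\,dy\Big).$$
This is exactly the quantity treated after Lemma~\ref{lem_integrand}. I apply Lemma~\ref{lem_integrand} with $k=R$ and integrate its bound $c/\log R$ over the region $B(R^{1/2}\log^{-\epsilon/3}R)\setminus B(r_0)$. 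This region has area of order $R\log^{-2\epsilon/3}R$, so
$$\lambda_R\ge c'\,R\log^{-1-2\epsilon/3}R .$$
For $R$ large this exceeds $2\pi R\log^{-1-\epsilon}R$, because the ratio of the two is a positive power $\log^{\epsilon/3}R$ times a constant, and that tends to infinity.

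\textbf{Step 3 (lower tail).} I use the standard Chernoff lower-tail bound for a Poisson variable: $\prob(\mathrm{Pois}(\mu)\le \mu/2)\le e^{-c\mu}$, which follows from optimizing $\mathbb E e^{-\theta X}$. This gives
$$\prob\big(\hat N_R\le \pi R\log^{-1-\epsilon}R\big)\le \exp\big(-c\,R\log^{-1-2\epsilon/3}R\big).$$
This bound decays faster than any power of $R$; indeed it is $\exp(-R^{1-o(1)})$, which is what s.e.$(R)$ denotes.

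The only point needing care is the uniformity in Lemma~\ref{lem_integrand}: its lower bound must hold throughout the ball of radius $R^{1/2}\log^{-\epsilon/3}R$ for all $R\ge k_0$, and the lemma already provides this. I also need the initial radius $r_0$ fixed, with $\hat N_R$ counting particles entering $B(r_0)$ rather than the growing pool, so that $\hat N_R$ is genuinely Poisson and does not depend on the pool dynamics. With these two checks, the corollary follows by combining Steps 1–3.
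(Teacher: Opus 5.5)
Your proposal is correct and follows the paper's argument. The paper obtains the corollary directly from its proof of the lower bound in Theorem~\ref{thm:growth}~(1): it writes $\hat N_R$ as a Poisson variable with mean $\lambda_R$, uses Lemma~\ref{lem_integrand} to get $\lambda_R\gtrsim R\log^{-1-2\epsilon/3}R\gg \pi R\log^{-1-\epsilon}R$, and applies a Poisson lower-tail bound to get the stretched-exponential estimate. Your first step makes explicit the trivial inclusion of events, which the paper leaves implicit; since $R^{-1-\epsilon}<1$, the left-hand event is just $\{\hat N_R=0\}$.
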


\begin{proof}[{Proof of the upper bound in Theorem \ref{thm:growth} (1)}]
	Recall that $A_0$ forms a Poisson point process on $\BR^2$ with constant intensity $\lambda$. For any Borel set $A\subset \BR^2$ with finite volume, the expected number of particles in $A_0\cap A$ is thus a Poisson r.v.'s with $\lambda(A)=\lambda \text{Vol}(A)$. It is not hard to see the follow large deviation estimate on $A_0$ intersecting balls:
	
	\begin{lemma}
		\label{lem_large_dev_up}
		For all fixed $\lambda>0$ and any $\epsilon>0$, there is an event $E_{\epsilon}$ such that $\prob(E_{\epsilon})=1$ and that for all $\omega\in E_{\epsilon}$ there is some $R_0(\omega)<\infty$ so that 
		\beq
		\label{eq_large_dev_up}
		|A_0(\omega)\cap B(R)|<(\lambda+\epsilon) \pi R^2, \ \forall R\ge R_0(\omega).
		\eeq
	\end{lemma}
	
	\begin{proof}
		Define events
		$$
		G_{\epsilon,n}=\left\{|A_0\cap B(n)|\ge\left(\lambda+\frac{\epsilon}{2}\right) \pi nR^2\right\}, \ \forall n\in \BZ^+,
		$$
		and $E_{\epsilon}=\{G_{\epsilon,n}, \ i.o.\}^c$. Then by monotonicity, it is straightforward to see that $\forall \omega\in E_{\epsilon}$, \eqref{eq_large_dev_up} is satisfied. Thus by Borel-Cantelli, it suffices to show that $\prob(G_{\epsilon,n})$ is summable. On the other hand, recalling that 
		$$
		|A_0(\omega)\cap B(n)|\sim \Pois\left(\lambda  \pi n^2 \right),
		$$
		by standard large deviation estimate (see Theorem 2.7.7., \cite{durret2019probability} for instance), there is some $c=c(\lambda,\epsilon)>0$ so that $\prob(G_{\epsilon,n})\le \exp(-c n^2)$, which is clearly summable over $n$. 
	\end{proof}
	
	%%%%%%%%%%%%%%%%%%%%%%%%%%%%%%%%%%%%%%%%%%%%%%%%%%%%%%%%%%%%%%%%%%%%%%%%%%%%%%%%%%%%%%%%%%%%%%%%%%%%%%%%%%%%%%%%%%%%%%%%%%%%%%%%%%%%%%%%%%%%%%%%%%%%%%%%%%%%%%%%%%%%%%%%%%%%%%%%%%%%%%%%%
	
	Now back to the proof of the upper bound, we first note that it follows the argument of Kesten and Sidravucius \cite{kesten2008problem}. Denote $V(r,t)$ to be the number of particles that move into $B(r)$ in the time interval $[0,t]$, i.e.
	\bae
	V(r,t)=|\{x\in A_0\setminus B(r):\inf_{s\le t}|\br^x_s|<r\}|.
	\eae
	By possibly over-counting particles that initiate in $B(\CE_t)$ but do not actually contribute to the growth, we obtain
	\bae
	\pi\CE_t^2\le |A_0\cap B(\CE_t)|+V(\CE_t,t).
	\eae
	Recalling Lemma \ref{lem_large_dev_up}, and that $\CE_t\to\infty$ as $t\to\infty$ by the lower bound we just showed, for each fixed $\epsilon>0$, there is some random $t_0$ such that for all $t>t_0$, $|A_0\cap B(\CE_t)|\le (\lambda+\epsilon)\pi\CE_t^2$.
	We thus get that $\forall t>t_0$, 
	\bae\label{eq:Vlower}
	V(\CE_t,t)\ge(1-\lambda-\epsilon)\pi \CE_t^2
	.\eae
	Next we estimate the expected number of particles that enter a ball. Denote $\br^{(1)}$, one dimensional Brownian motion. 
	\bae
	\ev[V(B(R),t)]&=\int_{r=R}^{\infty}\lambda 2\pi r \prob_r\left[\inf_{s\le t}|\br(s)|<R\right]dr\\
	&\le\int_{r=R}^{\infty}\lambda 2\pi r \prob_r\left[\inf_{s\le t}|\br^{(1)}(s)|<R\right]dr\\
	&\le C\lambda (R\sqrt{t}+t)
	.\eae
	Thus by Markov's inequality,
	\bae
	\prob\left[V(\CE_t,t)\ge (1-\lambda-\epsilon)\pi\CE_t^2|\CE_t\right]\le\frac{\ev\left[V(\CE_t,t)|\CE_t\right]}{(1-\lambda-\epsilon)\pi\CE_t^2}\le \frac{ C\lambda (\CE_t\sqrt{t}+t)}{(1-\lambda-\epsilon)\pi\CE_t^2}
	.\eae
	consider the sequence $t_k=4^k$, $R_k= k^2 2^k$. We obtain that
	\bae
	\prob\left[V(\CE_{t_k},{t_k})\ge (1-\lambda-\epsilon)\pi\CE_{t_k}^2|\CE_{t_k}\ge R_k\right]\le \frac{c}{k^2}
	.\eae
	Thus by Borel-Cantelli $\{V(\CE_{t_k},{t_k})\ge (1-\lambda-\epsilon)\pi\CE_{t_k}^2\}\cap\{\CE_{t_k}\ge R_k\}$ occurs for finitely many $k$'s a.s. By \eqref{eq:Vlower} we obtain that a.s. $\forall t>t_0$, $\CE_t\le 2\sqrt{t}(\log t)^2$.
\end{proof}

%%%%%%%%%%%%%%%%%%%%%%%%%%%%%%%%%%%%%%%%%%%%%%%%%%%%%%%%%%%%%%%%%%%%%%%%%%%%%%%%%%%%%

\section{Critical growth rate (Theorem \ref{thm:growth} (2)) }

In this section, we prove the second part of Theorem \ref{thm:growth} according to the following schemes: In Subsection \ref{subsec_volume}, we first show that, for our homogeneous Poisson cloud $A_t$, it is asymptotically impossible that the number of particles within an expanding ball deviates significantly from its expectation. However, Proposition \ref{prop_slow} shows that under the assumption of a sublinear power growth, there will be infinitely many occasions of ``slow" growth, which by Theorem \ref{thm:kurtz} implies the existence of abnormally small number of particles, and thus lead to a contradiction. 

\subsection{Volume Estimates of Poisson Field}
\label{subsec_volume}

To depict the ``ordinary" behavior of $A_t$, we start with the following lemma which shows that it is very unlikely that too many particles can ever escape from a given large ball. To be precise, for all $n$, define
$$
Y_n=\left|\left\{i: X^i_0\in B(n), \ \max_{s\le 1}|X^i_s|>n\right\} \right|
$$

\begin{lemma}
	\label{lem_exit}
	For all $\delta>0$, there is some $\gamma>0$ such that 
	$$
	\prob(Y_n\ge n^{1+\delta})\le \exp(-n^\gamma)
	$$
	holds for all sufficiently large $n$. 
\end{lemma}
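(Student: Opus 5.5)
By the marking theorem, $Y_n$ is Poisson, because it counts the points of the Poisson field $A_0$ that carry a mark ("exits $B(n)$ before time $1$"), and the marks are independent across particles. Its mean is
$$
\mu_n=\lambda\int_{B(n)}\prob_x\Bigl(\max_{s\le 1}|\br^x_s|>n\Bigr)\,dx .
$$
So the plan has two steps: show $\mu_n\le Cn$, and then apply a Poisson large deviation bound at the level $n^{1+\delta}\gg\mu_n$.

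\textbf{Bounding the mean.} Split $B(n)$ into unit annuli $B(n-m)\setminus B(n-m-1)$ for $m=0,1,\dots,n-1$. Each annulus has area at most $2\pi n$.

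For a starting point $x$ with $|x|\ge n-m-1$, the particle must move at least $m$ before time $1$ in order to exit. We control this with the same estimate used in the proof of Lemma \ref{lem:inf_rate}: either $N(1)\ge m/2$, or, on $\{N(1)<m/2\}$, the partial sums $\sum_{i\le j}\nu_i$ with $j\le m/2$ reach distance $m$. The first event has probability at most $e^{-cm\log m}$. For the second, the maximum over $j\le m/2$ of a Gaussian random walk exceeds $m$ with probability at most $Ce^{-cm}$, by a union bound over $j$ together with Gaussian tails. Hence
$$
\prob_x\Bigl(\max_{s\le 1}|\br^x_s|>n\Bigr)\le Ce^{-cm},
$$
and therefore
$$
\mu_n\le \lambda\sum_{m\ge0}2\pi n\,Ce^{-cm}\le C'\lambda n .
$$

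\textbf{Large deviation.} For $Z\sim\Pois(\mu)$ and $a\ge e^2\mu$, Chernoff's bound gives
$$
\prob(Z\ge a)\le e^{-\mu}(e\mu/a)^a\le e^{-a}.
$$
Apply this with $\mu=\mu_n$ and $a=n^{1+\delta}$. The condition $a\ge e^2\mu_n$ holds for all large $n$, since $n^{1+\delta}\ge e^2C'\lambda n$ eventually. We obtain
$$
\prob(Y_n\ge n^{1+\delta})\le \exp(-n^{1+\delta}),
$$
which is the claimed statement with any $\gamma\le 1+\delta$, for instance $\gamma=1$.

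The only step requiring some care is the uniform tail estimate for the walk's displacement in unit time, since the maximum over $s\le1$ involves a random number of jumps. The split on $N(1)$ described above handles this.
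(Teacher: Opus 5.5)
Your proof is correct, but it takes a different route from the paper's.

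\textbf{The paper's route.} The paper never uses that $Y_n$ is itself Poisson. It splits $B(n)$ into a thin boundary layer $B(n)\setminus B(n-n^{\delta/2})$ and the interior $B(n-n^{\delta/2})$.
\begin{itemize}
\item For the layer, a Poisson large deviation for \emph{counts} shows it rarely contains $n^{1+\delta}$ particles.
\item For the interior, it caps the number of particles at $4\pi n^2$ and applies a union bound to show that no interior particle exits by time $1$, since each would need a displacement of at least $n^{\delta/2}$.
\end{itemize}
This only needs a displacement tail at the single scale $n^{\delta/2}$. The resulting bound is governed by the interior term $4\pi n^2\,\prob_0(\max_{s\le 1}|\br_s|\ge n^{\delta/2})$, so it gives $\gamma$ of order $\delta/2$.

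\textbf{Your route.} You use the marking theorem to see that $Y_n$ is exactly Poisson. You integrate the exit probability over unit annuli to get $\mu_n=O(\lambda n)$, and then apply a Chernoff bound. This requires the displacement tail uniformly in the distance $m$, which your split on $N(1)$ supplies correctly.

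\textbf{What each buys.} Your argument is shorter and yields the much stronger bound $\exp(-n^{1+\delta})$, that is, any $\gamma\le 1+\delta$. It also works for every fixed $\lambda$, whereas the paper's step $\prob(|A_0\cap B(n)|\ge 4\pi n^2)$ tacitly needs $\lambda<4$. That is harmless there, since $\lambda=1$. The paper's split avoids computing the mean, at the cost of a weaker exponent. Either bound easily suffices for the summation over $n$ and over the $2n^{10}$ unit time intervals in Proposition~\ref{prop_abnormal}.

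\textbf{A small slip.} For $x$ in the annulus $B(n-m)\setminus B(n-m-1)$, the fact that forces a displacement of at least $m$ is $|x|\le n-m$, not $|x|\ge n-m-1$ as you wrote.
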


\begin{proof}
	Denote by $E_n$ the event of interest. Then $E_n\subset E_{n,1}\cup E_{n,2}$ where
	$$
	E_{n,1}=\{|A_0\cap B(n)\setminus B(n-n^{\frac{\delta}{2}})|\ge n^{1+\delta}\}
	$$
	and
	$$
	E_{n,2}=\{\exists i \ s.t. X^i_0\in B(n-n^{\frac{\delta}{2}}) \text{ and } \max_{s\le 1}|X^i_s|>n \}.
	$$
	For $E_{n,1}$, note that $|A_0\cap B(n)\setminus B(n-n^{\frac{\delta}{2}})|$ follows a Poisson distribution of which the intensity equals to 
	$$
	\lambda \text{Vol}[ B(n)\setminus B(n-n^{\frac{\delta}{2}})]= \pi\lambda[n^2-(n-n^{\frac{\delta}{2}})^2]\le 4\pi\lambda n^{1+\frac{\delta}{2}}.
	$$
	Then by a standard large deviation estimate of Poisson random variables, there is an absolute constant $c>0$ such that $\prob(E_{n,1})\le \exp(-c  n^{1+\frac{\delta}{2}})$ for all integer $n\ge 1$. And for $E_{n,2}$, a simple union bound implies that 
	\begin{align*}
	\prob(E_{n,2})&\le \prob(|A_0\cap B(n-n^{\frac{\delta}{2}})|\ge 4\pi n^2)+4\pi n^2\max_{x\in B(n-n^{\frac{\delta}{2}})} \prob_x \left(\max_{s\le 1}|\br_s|\ge n\right)\\
	& \le \prob(|A_0\cap B(n)|\ge 4\pi n^2)+4\pi n^2 \prob_0 \left(\max_{s\le 1}|\br_s|\ge n^{\frac{\delta}{2}}\right).
	\end{align*}
	And the desired result follows directly again from standard decay estimate Poisson and norm distributions. 
\end{proof}

Now for $\delta,R>0$, define $A_{R,\delta}$ as the event that there is ever an occasion with an abnormally small number of particles in $B(R)$ in the time interval $[0, R^{10}]$. I.e., for some $\delta>0$, let
$$
A_{R,\delta}=\left\{\exists t\in [0,R^{10}], s.t. \ |A_t\cap B(R)|<\pi R^2-R^{1+\delta} \right\}
.$$

The following proposition shows that such events will asymptotically a.s. stop happening as $R\to\infty$. 

\begin{proposition}
	\label{prop_abnormal}
	For all $\delta>0$. 
	$$
	\prob\left(\bigcup_{k=1}^\infty \bigcap_{R\ge k}  A_{R,\delta}^c \right)=1.
	$$
	I.e., almost surely there is some $R_0(\omega)<\infty$ such that for all $R\ge R_0$, $ |A_t\cap B(R)|\ge\pi R^2-R^{1+\delta}, \ \forall t\in [0,R^{10}]$. 
\end{proposition}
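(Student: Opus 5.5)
The plan is to reduce the continuum of times $t\in[0,R^{10}]$ and radii $R$ to a polynomial number of checks at integer times and integer radii, and then close with Borel--Cantelli. We work in the critical setting $\lambda=1$ of this section; this is what makes $\pi R^2$ the right centring. The key structural fact is stationarity of the full cloud: $A_t$ consists of all particles, with no engulfing or labels involved. Since the walks are i.i.d. and independent of the initial Poisson field, the displacement theorem gives that for each fixed $t$, $A_t$ is again a Poisson point process of intensity $1$. Moreover, by the Markov property, $(A_{j+s})_{s\ge0}$ has the same law as $(A_s)_{s\ge 0}$ for every integer $j$.

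\textbf{Step 1 (integer times, integer radii).} Fix an integer $n$ and an integer $j\in[0,(n+1)^{10}]$. The count $|A_j\cap B(n)|$ is $\Pois(\pi n^2)$. By a standard lower large deviation bound for Poisson variables,
\[
\prob\bigl(|A_j\cap B(n)|<\pi n^2-\tfrac12 n^{1+\delta/2}\bigr)\le \exp(-c n^{\delta}).
\]

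\textbf{Step 2 (between integer times).} For $t\in[j,j+1]$ we have
\[
|A_t\cap B(n)|\ge |A_j\cap B(n)|-Y_n^{(j)},
\]
where $Y_n^{(j)}$ counts the particles that lie in $B(n)$ at time $j$ and exit $B(n)$ at some time in $[j,j+1]$. By the shifted stationarity from the first paragraph, $Y_n^{(j)}$ has the law of $Y_n$. Lemma \ref{lem_exit}, applied with $\delta/4$ in place of $\delta$, therefore gives $\prob\bigl(Y_n^{(j)}\ge n^{1+\delta/4}\bigr)\le \exp(-n^{\gamma})$. Combining Steps 1 and 2 with a union bound over the $O(n^{10})$ integers $j$, the event
\[
F_n=\Bigl\{\exists\, t\in[0,(n+1)^{10}]:\ |A_t\cap B(n)|<\pi n^2-n^{1+\delta/2}\Bigr\}
\]
has probability at most $Cn^{10}\bigl(e^{-cn^{\delta}}+e^{-n^{\gamma}}\bigr)$ for large $n$. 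This is summable in $n$, so by Borel--Cantelli a.s. only finitely many $F_n$ occur.

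\textbf{Step 3 (real radii).} For real $R\in[n,n+1)$ and $t\le R^{10}\le (n+1)^{10}$, monotonicity of the count in the radius gives
\[
|A_t\cap B(R)|\ge |A_t\cap B(n)|\ge \pi n^2-n^{1+\delta/2}\quad\text{on }F_n^c.
\]
Also $\pi n^2\ge \pi R^2-2\pi R-\pi$. Since $2\pi R+\pi+R^{1+\delta/2}\le R^{1+\delta}$ for large $R$, we obtain $A_{R,\delta}^c$ for all $R\ge R_0(\omega)$, which is the claim.

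The step needing the most care is Step 2: the inequality $|A_t\cap B(n)|\ge |A_j\cap B(n)|-Y_n^{(j)}$ only loses particles, so it gives exactly the one-sided control required. The real work is justifying that Lemma \ref{lem_exit}, which is stated at time $0$, transfers to every integer time $j$. This follows from the Poisson stationarity of the full cloud together with the independence of increments of the walks after time $j$. One must make sure the correct $\delta$-slack is passed through Lemma \ref{lem_exit}, which is why it is invoked with $\delta/4$ rather than $\delta$.
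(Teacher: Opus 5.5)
Your proposal is correct and follows the same route as the paper: check integer times with Poisson concentration, control escapes during each unit interval by Lemma \ref{lem_exit} (transferred to time $j$ via stationarity of the full cloud), then take a union bound over the $O(n^{10})$ integer times and apply Borel--Cantelli. Your explicit slack, using exponent $1+\delta/2$ at integer radii before passing to real $R\in[n,n+1)$, is more careful than the paper's stated inclusion $A_{R,\delta}\subset\bar A_{n,\delta}$, which only holds thanks to the smaller exponent $1+\delta/4$ in its sub-events.
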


\begin{proof}
	For all integer $n$, one may similarly define
	$$
	\bar A_{n,\delta}=\left\{\exists t\in [0,2n^{10}], s.t. \ |A_t\cap B(n)|<\pi n^2-n^{1+\delta}  \right\}.
	$$	
	By definition, for all sufficiently large $R$ such that $\lfloor R\rfloor\ge \frac{R}{2^{\frac{1}{10}}}$, we always have $A_{R,\delta}\subset \bar A_{n,\delta}$. Thus to prove this proposition, it suffices to show the following assertion: 
	\beq
	\label{eq_bar_A}
		\bar A_{n,\delta}=s.e.(n)
	\eeq
	and thus summable w.r.t. $n$ for all $\delta>0$. Now for each $n$ it is not hard to see that 
	$$
	\bar A_{n,\delta}=\bigcup_{k=0}^{2n^{10}-1} \tilde A^{(k)}_{n,\delta}\cup \bigcup_{k=1}^{2n^{10}}\hat A^{(k)}_{n,\delta}
	$$
	where 
	$$
	\tilde  A^{(k)}_{n,\delta}=\left\{|A_k\cap B(n)|<\pi n^2-n^{1+\frac{\delta}{4}} \right\}
	$$
	stands for the event there are abnormally small number of particles at a given integer time point, and 
	\begin{align*}
	\hat  A^{(k)}_{n,\delta}=&\{\text{there are at least $n^{1+\frac{\delta}{4}}$ particles that have ever moved from} \\
	& \text{$B(n)$ to $B(n)^c$ in time interval $[k-1,k]$} \}
	\end{align*}
	for the event that too many particles escape the ball in a unit time interval. 
	
	By Lemma \ref{lem_exit}, we showed that $\prob(\hat A^{(k)}_{n,\delta})=s.e.(n)$. And for $\tilde  A^{(k)}_{n,\delta}$ note that
	$$
	Z_k:=\left|A_k\cap B(n) \right|\sim \Pois(\pi n^2).
	$$ 
	Thus by the concentration inequality of Poisson distribution, 
	$$
	\prob\left(Z_k\le \pi n^2-n^{1+\frac{\delta}{4}}\right)\le \exp\left(-\frac{n^{2+\frac{\delta}{2}}}{2\left(\pi n^2+n^{1+\frac{\delta}{4}}\right)} \right)\le \exp\left(-\frac{n^{\frac{\delta}{2}}}{4\pi} \right)
	$$
	which concludes the proof of this proposition. 
\end{proof}

\subsection{Mesoscopic Stall under Sublinear Growth}

In this section, we show that if $\CE_t$ grows between two sub-linear power orders, then there will be infinite many occasions where $\CE_t$ grows no more than $O(1)$ for a mesoscopic time scale. 

\begin{proposition}
	\label{prop_slow}
	For an arbitrary C\'adl\'ag function $\CE_t$ such that 
	\beq
	\label{assump_low}
	\liminf_{t\to\infty} \frac{\CE_t}{t^{\frac{1}{3}}}=\infty
	,\eeq
	and that 
	\beq
	\label{assump_up}
	\limsup_{t\to\infty} \frac{\CE_t}{t^{1-\alpha}}\le 1
	\eeq
	for some $\alpha\in (0,1)$, there is some $\beta>0$ such that for all $T_0\ge 0$ there is a $t_1>T_0$ such that 
	\beq
	\label{eq_slow}
	\CE_{t_1-t_1^\beta}\ge \CE_{t_1}-2.
	\eeq
\end{proposition}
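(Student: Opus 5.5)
The plan is to argue by contradiction with a backward iteration, taking any $\beta\in(0,\alpha)$, say $\beta=\alpha/2$. Suppose the conclusion fails for this $\beta$. Then there is some $T_0$ such that for every $t>T_0$ we have $\CE_{t-t^\beta}<\CE_t-2$. In words: going back in time by $t^\beta$ always lowers $\CE$ by more than $2$. Iterating this many times over a macroscopic time window forces a drop of order $t^{1-\beta}$. That is incompatible with the sublinear upper bound \eqref{assump_up}, because $1-\beta>1-\alpha$.

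First fix a large $t$ with $t/2>T_0$. Define $t_0=t$ and $t_{j+1}=t_j-t_j^\beta$, and stop at the first index $J$ with $t_{J}<t/2$. For every $j<J$ we have $t_j\ge t/2>T_0$, so the standing hypothesis gives $\CE_{t_{j+1}}<\CE_{t_j}-2$. Telescoping yields
\[
\CE_{t_{J}}<\CE_t-2J .
\]
Next bound $J$ from below. Each step length satisfies $t_j^\beta\le t^\beta$, and the steps must cover a length of at least $t/2$ before $t_J$ falls below $t/2$. Hence $J\ge \frac{t}{2t^\beta}=\tfrac12 t^{1-\beta}$. The number of steps is also finite, since each step has length at least $(t/2)^\beta$; this makes the iteration well defined. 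Together these give $\CE_t>\CE_{t_J}+t^{1-\beta}$.

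It remains to make sure $\CE_{t_J}$ does not spoil this inequality. We have $t_J\ge t/2-t^\beta\ge t/4$ for large $t$, so $t_J\to\infty$ as $t\to\infty$. By \eqref{assump_low}, $\CE_{t_J}\ge0$ for all large $t$; this is the only use of the lower growth assumption, and nonnegativity alone suffices. Therefore $\CE_t>t^{1-\beta}$ for all sufficiently large $t$. On the other hand, \eqref{assump_up} gives $\CE_t\le 2t^{1-\alpha}$ for large $t$. Since $\beta<\alpha$, the two bounds contradict each other as $t\to\infty$. So for every $T_0$ there must exist $t_1>T_0$ satisfying \eqref{eq_slow}, with $\beta$ depending only on $\alpha$.

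The only delicate point is the bookkeeping in the iteration. One has to check that every iterate $t_j$ with $j<J$ stays above $T_0$, which holds because they stay at least $t/2$. One also has to check that the step lengths are uniformly bounded above by $t^\beta$, which is what yields the count $J\gtrsim t^{1-\beta}$. No monotonicity of $\CE$ is needed: the argument only evaluates $\CE$ at the discrete times $t_j$, so the càdlàg assumption is just for definiteness.
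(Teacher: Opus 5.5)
Your argument is correct, and it takes a different route from the paper.

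The paper argues constructively. For large $T_0$ it introduces the comparison curve $\varphi(t)=(t-T_0)^{1-\alpha/2}$, which starts below $\mathcal{E}$ and eventually overtakes it by the upper growth hypothesis, and it lets $t_1$ be the first time $\varphi(t)\ge\mathcal{E}_t$. Since $\mathcal{E}>\varphi$ on $[T_0,t_1)$ and $\mathcal{E}_{t_1}\le\varphi(t_1)$ by right-continuity, the increment $\mathcal{E}_{t_1}-\mathcal{E}_{t_1-s}$ is at most $\varphi(t_1)-\varphi(t_1-s)$. The mean value theorem makes this smaller than $2$ for $s\le(t_1-T_0)^{\alpha/4}$. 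The $t^{1/3}$ lower bound is used essentially there: it forces $t_1-T_0\ge(t_1/2)^{1/(3(1-\alpha/2))}$, so this window is at least a power of $t_1$.

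You instead negate the statement and telescope the drop of more than $2$ along a backward chain of order $t^{1-\beta}$ lags. This is shorter, needs no c\`adl\`ag regularity, and uses the lower hypothesis only for eventual nonnegativity of $\mathcal{E}$. It also works for every $\beta\in(0,\alpha)$, which is essentially the optimal range: for $\mathcal{E}_t=t^{1-\alpha}$ with $\alpha<2/3$ one has $\mathcal{E}_t-\mathcal{E}_{t-t^\beta}\ge(1-\alpha)t^{\beta-\alpha}\to\infty$ whenever $\beta>\alpha$.

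The crossing argument only yields $\beta$ of order $\alpha/12$, since $(t_1-T_0)^{\alpha/4}\ge(t_1/2)^{\alpha/(12(1-\alpha/2))}$. The value $\beta=\frac{1}{12(1-\alpha/2)}$ printed in the paper drops this factor $\alpha$, and by the example above it cannot be right for $\alpha<1/12$. In exchange, the paper's version produces an explicit $t_1$ and controls the increment over the whole window $[t_1-t_1^\beta,t_1]$ rather than at a single lag. For the monotone pool radius this makes no difference downstream.
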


\begin{remark}
	For the engulfing pool model with $\lambda=1$, the lower bound in Theorem \ref{thm:growth} (1) already gives the desired $\liminf$ in this proposition. And if we assume there is also a sublinear power order of upper bound, it will satisfy the $\limsup$. 
\end{remark}

\begin{proof}
	Without loss of generality, it suffices to prove this proposition for all $T_0$ larger enough. By \eqref{assump_low}, we hereby assume $T_0\ge 100$ and that $\CE_{T_0}\ge T_0^{\frac{1}{3}}$. Define
	$$
	\varphi(t)=(t-T_0)^{1-\frac{\alpha}{2}}.
	$$
	By \eqref{assump_up}, $\lim_{t\to\infty} \frac{\CE_t}{\varphi(t)}=0$, thus
	$$
	t_1=\inf\{t\ge T_0, \ \varphi(t)\ge \CE_t\}<\infty.
	$$
	For $\Delta t_1=t_1-T_0$, one may note that 
	$$
	\varphi(t_1)=(\Delta t_1)^{1-\frac{\alpha}{2}}\ge\CE_{t_1}\ge \CE_{T_0}\ge T_0^{\frac{1}{3}},
	$$
	which implies that 
	$$
	\Delta t_1\ge T_0^{\frac{1}{3(1-\frac{\alpha}{2})}}>1
	$$
	and that $T_0\le \Delta t_1^{3(1-\frac{\alpha}{2})}$. So we have 
	$$
	t_1=T_0+\Delta t_1\le \Delta t_1+\Delta t_1^{3(1-\frac{\alpha}{2})}\le 2\Delta t_1^{3(1-\frac{\alpha}{2})}
	$$
	where the second inequality holds as $3(1-\frac{\alpha}{2})\ge \frac{3}{2}>1$. Thus we have 
	\beq
	\label{eq_low_delta}
	\Delta t_1\ge \left(\frac{t_1}{2} \right)^{\frac{1}{3(1-\frac{\alpha}{2})}}
	\eeq
	which implies that $\Delta t_1$ is of at least a mesoscopic power order w.r.t. $t_1$. 
	
	Now recalling the definition of $t_1$ we have 
	$$
	\CE_{t_1-s}\ge \varphi(t_1-s)=(\Delta t_1-s)^{1-\frac{\alpha}{2}}
	$$
	for all $s\in [0,\Delta t_1]$. This also indicates that 
	$$
		\CE_{t_1}-\CE_{t_1-s}\le (\Delta t_1)^{1-\frac{\alpha}{2}}-(\Delta t_1-s)^{1-\frac{\alpha}{2}}.
	$$ 
	So now let $\gamma=\frac{\alpha}{4}$, by mid value theorem
	$$
	\begin{aligned}
		\CE_{t_1}-\CE_{t_1-\Delta t_1^\gamma}&\le (\Delta t_1)^{1-\frac{\alpha}{2}}-(\Delta t_1-\Delta t_1^\gamma)^{1-\frac{\alpha}{2}}\\
		&\le \Delta t_1^\gamma \left(1-\frac{\alpha}{2}\right) \left(\frac{\Delta t}{2} \right)^{-\frac{\alpha}{2}}\le 2 \left(\Delta t \right)^{-\frac{\alpha}{4}}<2.
	\end{aligned}
	$$
	Finally, recalling \eqref{eq_low_delta}, we have $\Delta t_1^\gamma\ge t_1^\beta$ with $\beta=\frac{1}{12(1-\frac{\alpha}{2})}$, which concludes the proof of this proposition. 
\end{proof}

\begin{proof}[Proof of Theorem \ref{thm:growth} (2) ]
	Now we are ready to prove our main result on the lower bound of growth at criticality. For the sake of contradiction, for a $\delta>0$ consider the event 
	$$
	\tilde E_1=\left\{\limsup_{t\to\infty} \frac{\CE_t}{t^{1-\delta}}<1\right\}
	,$$
	and assume that $\prob(\tilde E_1)>0$. By \ref{thm:growth}(1), one also has for each $\gamma\in (\frac{1}{3}, \frac{1}{2})$, $\prob(\tilde E_2)=1$, where 
	$$
	\tilde E_2=\left\{\liminf_{t\to\infty} \frac{\CE_t}{t^{\gamma}}=+\infty\right\}.
	$$
	So, under event $\tilde E_1\cap \tilde E_2$, Conditions (\ref{assump_low}, \ref{assump_up}) are satisfied. Thus we can define the following sequence of stopping times w.r.t. to $\sigma(\{\CE_s, \ s\le t\})$ where 
	$$
		\tilde \tau_1=\inf\left\{t\ge 1, \ \CE_{t-t^\beta}\ge \CE_t-2 \right\}.
	$$
	And for all $k\ge 1$, we iteratively define 
	$$
		\tilde\tau_{k+1}=\inf\left\{t\ge \tilde\tau_{k}+1, \ \CE_{t-t^\beta}\ge \CE_t-2  \right\}. 
	$$
	Thus $\tilde\tau_k\ge k, \ \forall k\ge 1$. By Proposition \ref{prop_slow}, in $\tilde E_1\cap \tilde E_2$, we have that $\tilde\tau_n<\infty$ for all $n\ge 1$. Moreover, for each $n\ge 1$, denote by $\tilde R_n=\CE_{\tilde\tau_n}$ the radius of our engulfing model at $\tilde\tau_n$, with the convention that $\tilde R_n=\infty$ if $\tilde \tau_n=\infty$. 
	
	Then for all $\omega\in \tilde E_1\cap \tilde E_2$, by definition there must exist some finite integer $K_0(\omega)$ such that $\CE_t\in (t^\gamma, t^{1-\delta})$ for all $t\ge K_0$, implying that 
	$$
	\tilde R_k \in \left(\tilde \tau_k^\gamma,\tilde \tau_k^{1-\delta} \right), \ \forall k\ge K_0.
	$$
	Thus we have 
	$$
	\tilde E_1\cap \tilde E_2\subset \bigcup_{n=1}^\infty \bigcap_{k=n}^\infty \left\{ \tilde R_k\in  \left(\tilde \tau_k^\gamma,\tilde \tau_k^{1-\delta} \right),\tilde \tau_k<\infty\right\}
	.$$
	So, under the assumption that $\prob(\tilde E_1)>0$, there must exist some $N_0<\infty$ such that 
	\beq
	\label{eq_Rk}
	\prob\left(\bigcap_{k=N_0}^\infty \left\{ \tilde R_k\in  \left(\tilde \tau_k^\gamma,\tilde \tau_k^{1-\delta} \right),\tilde \tau_k<\infty\right\} \right)>0.
	\eeq
	Denote such probability by $c_0$. Now define $\tilde N_k=|A_{\tilde \tau_k}\cap B(2\tilde R_k)|$ to be the number of particles, regardless of their labels, within $B(2\tilde R_k)$ at time $\tilde \tau_k$, and 
	$$
	\tilde M_k=\left|\{i: a(i,\tilde \tau_k)=1, \br^i_{\tilde \tau_k}\in B(2\tilde R_k)\} \right|
	$$
	to be the number of free particles in $ B(2\tilde R_k)\setminus B(\tilde R_k)$. Note that by conservation of mass, $\pi \tilde R_k^2$ equals to the total number of particles that have released their mass. Thus
	\beq
	\label{eq_vol_1}
	\tilde N_k\le \pi\tilde R_k^2+ \tilde M_k.
	\eeq
	Now consider events:
	$$
	H_k=\left\{ \tilde R_k\in  \left(\tilde \tau_k^\gamma,\tilde \tau_k^{1-\delta} \right),\tilde \tau_k<\infty \right\}\cap \left\{\tilde M_k\ge 3\pi \tilde R_k^2-\tilde R_k^{1+\frac{\beta}{4}} \right\}
	$$
	where $\beta$ is the constant in Proposition \ref{prop_slow}. We hereby claim that 
	\beq
	\label{eq_H}
	\prob(H_k)=s.e.(k),
	\eeq
	and thus summable over $k$. To prove this claim, noting that 
	$$
	\left\{ \tilde R_k\in  \left(\tilde \tau_k^\gamma,\tilde \tau_k^{1-\delta} \right),\tilde \tau_k<\infty \right\}\in \sigma(\tilde R_k) \subset  \sigma(\{\CE_t, t\le \tilde \tau_k\})
	,$$
	by Theorem \ref{thm:kurtz}, given the growth history of the radius, the (conditional) distribution of free particles in $B(\tilde R_k)^c$ form a Point point process with intensity 
	\beq
	\label{eq_con_int}
	\tilde \lambda^{(k)}\left(x,y| \CE_{t}, \ \forall t\le \tilde \tau_k\right)=\prob_{(x,y)} \left(|\br_t|\ge \CE_{\tilde\tau_k-t}, \ \forall t\le \tilde \tau_k \right). 
	\eeq
	Thus we have the conditional distribution $\tilde M_k$ is given by $\Pois\left(\tilde \Lambda^{(k)}\left(\CE_{t}, \ \forall t\le \tilde \tau_k\right)\right)$, where
	$$
	\tilde \Lambda^{(k)}\left(\CE_{t}, \ \forall t\le \tilde \tau_k\right)= \int_{B(2\tilde R_k)\setminus B(\tilde R_k)} 	\tilde \lambda^{(k)}\left(x,y| \CE_{t}, \ \forall t\le \tilde \tau_k\right) dxdy.
	$$
	Recall \eqref{prop_slow}, and that under event $\left\{ \tilde R_k\in  \left(\tilde \tau_k^\gamma,\tilde \tau_k^{1-\delta} \right),\tilde \tau_k<\infty \right\}$, $\tilde R_k<\tilde \tau_k^{1-\delta}\le \tilde \tau_k$. We have for all 
	$$
	(x,y)\in B\left(\tilde R_k+\tilde R_k^{\frac{\beta}{2}} \right)\setminus B\left(\tilde R_k\right)\subset B\left(\tilde R_k+\tilde \tau_k^{\frac{\beta}{2}} \right)\setminus B\left(\tilde R_k\right),
	$$
	by invariance principle, there is some absolute constant $c_0\in (0,1)$ such that 
	\beq
	\label{eq_inv_1}
	\begin{aligned}
		\prob_{(x,y)} \left(|\br_t|\ge \CE_{\tilde\tau_k-t}, \ \forall t\le \tilde \tau_k \right)&\le \prob_{(x,y)} \left(|\br_t|\ge \CE_{\tilde\tau_k-\tilde\tau_k^\beta}, \ \forall t\le \tilde \tau_k^\beta \right)\\
	&\le \prob_{(x,y)} \left(|\br_t|\ge \CE_{\tilde\tau_k}-2, \ \forall t\le \tilde \tau_k^\beta \right)\le 1-c_0.
	\end{aligned}
	\eeq
	Thus we have 
	$$
	\int_{B\left(\tilde R_k+\tilde R_k^{\frac{\beta}{2}} \right)\setminus B\left(\tilde R_k\right)} 	\tilde \lambda^{(k)}\left(x,y| \CE_{t}, \ \forall t\le \tilde \tau_k\right) dxdy\le (1-c_0) V\left(B\left(\tilde R_k+\tilde R_k^{\frac{\beta}{2}} \right)\setminus B\left(\tilde R_k\right) \right)
	$$
	which implies that 
	\begin{align*}
	\tilde \Lambda^{(k)}\left(\CE_{t}, \ \forall t\le \tilde \tau_k\right)&\le 3\pi \tilde R_k^2-c_0 V\left(B\left(\tilde R_k+\tilde R_k^{\frac{\beta}{2}} \right)\setminus B\left(\tilde R_k\right) \right)\\
	&\le  3\pi \tilde R_k^2-2c_0\pi \tilde R_k^{1+\frac{\beta}{2}}.
	\end{align*}
	Thus again by concentration inequality of Poisson r.v., there is some absolute constant $C<\infty$,  
	\beq
	\label{eq_con_2}
		\prob\left(\left.\tilde M_k\ge 3\pi \tilde R_k^2-\tilde R_k^{1+\frac{\beta}{4}}\right|\CE_{t}, \ \forall t\le \tilde \tau_k \right)\le \exp \left(-\frac{\tilde R_k^{2+\beta}}{C\tilde R_k^2} \right)=s.e.(\tilde R_k)
	\eeq
	under event $\left\{ \tilde R_k\in  \left(\tilde \tau_k^\gamma,\tilde \tau_k^{1-\delta} \right),\tilde \tau_k<\infty \right\}$. Now by total probability formula and recall that $\tilde \tau_k\ge k$, one has 
	$$
	\prob(H_k)=E\left[\ind_{\left\{ \tilde R_k\in  \left(\tilde \tau_k^\gamma,\tilde \tau_k^{1-\delta} \right),\tilde \tau_k<\infty \right\}}\prob\left(\left.\tilde M_k\ge 3\pi \tilde R_k^2-\tilde R_k^{1+\frac{\beta}{4}}\right|\CE_{t}, \ \forall t\le \tilde \tau_k \right)\  \right]=s.e.(k)
	$$
	and thus verifies the assertion in \eqref{eq_H}. And combining with \eqref{eq_Rk}, there is some $N_1\ge N_0$ such that for
	\begin{align*}
		J_{N_1}&:= \left(\bigcap_{k=N_1}^\infty \left\{ \tilde R_k\in  \left(\tilde \tau_k^\gamma,\tilde \tau_k^{1-\delta} \right),\tilde \tau_k<\infty\right\} \right)\setminus \left(\bigcup_{k=N_1}^\infty H_k \right)\\
		&=\bigcap_{k=N_1}^\infty \left(\left\{ \tilde R_k\in  \left(\tilde \tau_k^\gamma,\tilde \tau_k^{1-\delta} \right),\tilde \tau_k<\infty\right\} \cap \left\{\tilde M_k< 3\pi \tilde R_k^2-\tilde R_k^{1+\frac{\beta}{4}} \right\}  \right)
	\end{align*}
	we have $\prob(J_{N_1})>0$. 
	
	However, recalling \eqref{eq_vol_1}, under event $J_{N_1}$ for all $k\ge N_1$
	$$
		\tilde N_k\le \pi\tilde R_k^2+ \tilde M_k\le 4\pi \tilde R_k^2-\tilde R_k^{1+\frac{\beta}{4}}.
	$$
	At the same time, under $J_{N_1}$ we have $\tilde R_k>\tilde \tau_k^\gamma, \ \forall k\ge N_1$, which implies that $\tilde \tau_k<\tilde R_k^3$. Thus we now have the increasing sequence $\tilde R_k\uparrow \infty$, such that for each $k$ there is some time $\tilde \tau_k \in [0,\tilde R_k^3)\subset [0,(2\tilde R_k)^{10})$ such that the number of particles in $B(2\tilde R_k)$ is less than $\pi(2 \tilde R_k)^2- \tilde R_k^{1+\frac{\beta}{4}}$ which contradicts with Proposition \ref{prop_abnormal} with $\delta=\frac{\beta}{5}$.  
\end{proof}

%%%%%%%%%%%%%%%%%%%%%%%%%%%%%%%%%%%%%%%%%%%%%%%%%%%%%%%%%%%%%%%%%%%%%%%%%%%%%%%%%%%%%%%%%%%%%%%%%%%%%%%%%%%%%%%%%%%%%%%%%%%%%%%%%%%%%%%%%%%%%%%%%%%%%%%%%%%%%%%%%%%%%%%%%%%%%%%%%%%%%%%%%%%%%%%%%%%%%%%%%%%%%%%%%%%%%%%%%%%%%%%%%%%%%%%%%%%%%%%%%%%%%%%%%%%%%%%%%%%%%%%%%%%%%%%%%%%%%%%%%%%%%%%%%%%%%%%%%%%%%%%%%%%%%%%%%%%%%%%%%%%%%%%%%%%%%%%%%%%%%%%%%%%%%%%%%%%%%%%%%%%

\section*{Acknowledgments}
YZ is supported by NSFC-12271010, and the Fundamental Research Funds for the Central Universities and the Research Funds of Renmin University of China 24XNKJ06.

\appendix

\section{Kurtz Theorem for Brownian motion}\label{sec:jurtz_brownian}
In this section we present for future reference a version of Kurtz's theorem for the case the Poisson process $A_t=\{\mathfs{D}^x_t\}_{x\in A_0}$, where $\mathfs{D}^x_t$ are distributed as i.i.d. Brownian motions.
\begin{theorem}\label{thm:kurtz_brownian}
For every $t>0$, the random collection of points $$\mathfrak{K}_t=\{x\in A_t: |x|>\CE_t, a(x,t)=1\},$$ conditional on $\sigma\{\CE_s:s\le t\}$ is an independent Poisson point process, with intensity measure $\lambda \prob_x(|\mathfs{D}_s|>\CE_{t-s}, \forall s\le t)$.% i.e. the dependence to the aggregate is only through the intensity measure.
\end{theorem}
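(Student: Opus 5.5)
The plan is to follow the strategy of the proof of Theorem \ref{thm:kurtz}: discretize $[0,t]$ into $2^k$ intervals, apply the Marking Theorem sequentially, and pass to the limit $k\to\infty$. The difference is that Brownian paths move at every time scale, so the almost-sure "freezing" events $\mathcal{S}_k,\mathcal{G}_k$ are no longer available. I would therefore replace the exact encoding of the engulfing cascade by an approximation argument. Fix the dyadic grid $t_j=j2^{-k}t$. Define the discrete-time process $\mathfrak{I}^{(k)}_j$ exactly as in \eqref{eq:discretetimeprocess}, with the Brownian paths $\mathfs{D}^x$ in place of $\br^x$. For this process, the decision at step $j$ depends on each particle's own path on $[t_{j-1},t_j]$ and on the already-revealed counts $|\mathfrak{I}^{(k)}_i|$, $i<j$.

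The first step is the exact finite-$k$ statement. Thinning a PPP by independent marks (the path of each particle) gives the following. Conditional on the discrete radius history $r^{(k)}_0,\dots,r^{(k)}_{j}$, the set of particles not yet absorbed at step $j$ is a PPP, and its positions at time $t_j$ have intensity
\[
\lambda\,\prob_x\bigl(\mathfs{D}_{[t_{l-1},t_l]}\cap B(r^{(k)}_{l-1})=\emptyset,\ \forall l\le j\bigr),
\]
after time reversal. This uses reversibility of Brownian motion and Lebesgue invariance, as in Lemma \ref{lem_refill}. The cascade inside each dyadic step is handled by iterating the marking within the step, as in the definition of $\CE$. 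Each iteration is again a thinning, so conditional Poissonity survives any finite number of rounds. The cascade terminates a.s. on $\{T_{\mathfs{E}}>t\}$ because each round is a Galton--Watson step with finite-mass input.

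The second step couples to the true process. Every process is built from the same cloud $A_0$ and the same paths. I will show that the discretized radius process $\CE^{(k)}$ converges to $\CE$ at all continuity points, and that the set of surviving particles converges a.s. This needs the following facts:
\begin{itemize}
\item $\CE$ has finitely many jumps in $[0,t]$, by the same Galton--Watson domination as in Section 4.
\item Between jumps, the first hitting of $B(\CE_{\hat\tau_{m}})$ is by a single particle.
\item Brownian paths that touch the sphere $\partial B(r)$ immediately enter the open ball. This is the regularity of the boundary, and it gives continuity of hitting times under small perturbations of the radius.
\end{itemize}
The discrete process detects each hit within $2^{-k}t$. I must also rule out that, during this lag, some other particle crosses into the new annulus differently from the continuous cascade. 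The probability of that tends to $0$, since only finitely many particles lie near the relevant region and their hitting times are a.s. distinct. So the survivor sets agree for all large $k$ on events of probability tending to one.

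The last step passes the conditional Poisson law to the limit. I test against bounded functions $F$ of finitely many radius values $(\CE_{s_1},\dots,\CE_{s_m})$ and Laplace functionals $e^{-\sum_{y\in\mathfrak{K}_t}f(y)}$ with compactly supported $f$. The finite-$k$ identity gives
\[
\ev[F\,e^{-\sum f}]=\ev\bigl[F\exp\bigl(-\int(1-e^{-f})\,\Lambda^{(k)}\bigr)\bigr].
\]
Dominated convergence then yields the identity with the limiting intensity $\lambda\prob_x(|\mathfs{D}_s|>\CE_{t-s}\ \forall s\le t)$. Here I use that the non-hitting probability is continuous in the radius path at a.e. realization, by boundary regularity again. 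The main obstacle is the coupling step. Brownian particles started arbitrarily close to the enlarged pool enter in arbitrarily short time, so the "within-step" cascade of the discretization may absorb particles that the instantaneous engulfing would also absorb, but in a different order. I would handle this by noting that engulfing is order-independent: the final radius of a cascade is the least fixed point $r$ with $\pi r^2=\pi r_{\rm start}^2+\#\{\text{active particles in }B(r)\}$. I would then show that the discrete fixed point converges to the continuous one, using that particle positions have no atoms on spheres.
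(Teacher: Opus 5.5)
Your overall route is the paper's. You use the dyadic scheme \eqref{eq:discretetimeprocess}, get exact conditional Poissonity of the discrete scheme from the Marking Theorem, and use path continuity to show that for $k\ge k(\omega)$ the scheme reproduces each engulfing cascade within a window of length $O(2^{-k})$ after $\hat\tau_m$. Continuity suffices because only finitely many particles come near the pool, all at positive distance from the relevant spheres. Your least-fixed-point remark is the same mechanism as the paper's observation that only the total $\sum_{i\le N}|\mathfrak{I}_i|=\sum_{i\le N}\xi_i$ matters, even if some layers are captured early. The sentence about iterating marks \emph{within} a dyadic step is moot if you really use \eqref{eq:discretetimeprocess}, where the cascade is spread over consecutive steps.

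Where you go beyond the paper is the limit passage, which the paper compresses into ``then we are done''. For Brownian paths the discrete avoidance event differs from $\{|\mathfs{D}_s|>\CE_{t-s},\ \forall s\le t\}$ on the lag windows, so an argument like yours is genuinely needed. Your argument combines a Laplace-functional identity tested against $\sigma(|\mathfrak{I}^{(k)}_j|)$-measurable approximants of $F(\CE_{s_1},\dots,\CE_{s_m})$ at non-jump times, bounded convergence, and a monotone class step. For $\Lambda^{(k)}\to\Lambda$ the precise input is this: the discrete radius lies below $\CE$ and agrees with it off windows shrinking to the jump times. In addition, the reversed path a.s.\ does not sit on one of the finitely many cascade spheres at the fixed times $t-\hat\tau_m$ (fixed-time density), and it does not merely graze a sphere elsewhere (regularity).

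The step that fails as written is ``$\CE$ has finitely many jumps in $[0,t]$, by the same Galton--Watson domination as in Section 4''. That argument needs $\lambda\le 1$ and the rate bound of Lemma \ref{lem:inf_rate}. That bound fails for Brownian particles, and this is exactly the obstruction described in Open Problem \ref{open:brownian}. For $\lambda>1$ the process explodes, so the claim is false in general; likewise, cascades do not terminate ``because each round is a Galton--Watson step''. Replace this by mass counting. On $\{\CE_t<\infty\}$ every jump adds at least one unit of area, so there are at most $\pi\CE_t^2$ jumps. On $\{\CE_t=\infty\}\in\sigma\{\CE_s:s\le t\}$ one has $\mathfrak{K}_t=\emptyset$ and zero intensity, so the statement is trivial there; this is how the paper closes its proof. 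Because your limiting identity is taken over the whole probability space, you must also treat the explosion event explicitly. For each fixed $J$ and all large $k$, the scheme reproduces the first $J$ layers, so the discrete radius at time $t$ tends to $\infty$ and $\Lambda^{(k)}\to 0$ on compacts. You should also take test functions $F$ continuous at the value $\infty$.
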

%%%%%%%%%%%%%%%%%%%%%%%%%%%%%%%%%%%%%%%%%%%%%%%%%%%%%%%%%%%%%%%%%%%%%%%%%%%%%%%%%%%%%%%%%%%%%%%%%%%%%%%%%%%%%%%%%%%%%%%%%%%%%%
\begin{proof}
We partition $[0,t]$ to $2^k$ intervals. Let 
\bae
&\mathfrak{I}_1=\left\{x\in A_0: \mathfs{D}^x_{[0,2^{-k}t]}\cap B(1/\sqrt{\pi})\neq\emptyset\right\}
%&\mathfrak{I}_1=\left\{x\in A_0: \mathfs{D}^x_{[0,2^{-k}t]}\cap B\left(\sqrt{\frac{1+\mathfrak{I}'_1}{\pi}}\right)\neq\emptyset\right\}
.\eae
For any $1<j<2^k$, %\note{Need to include the engulfing procedure. Idea 27.2.25: all the processes for different $k$s are coupled via the same $A_t$. After the first engulfing there is a non-infinitesimal gap between each arrival. As $k\to\infty$ get a.s. convergence of the processes in the lower time gaps after each real engulfing procedure}
\bae\label{eq:discretetimeprocess_brownian}
\mathfrak{I}_j=\left\{x\in A_0\setminus \bigcup_{i\le j-1}\mathfrak{I}_i : \mathfs{D}^x_{[(j-1)2^{-k}t,j2^{-k}t]}\cap B\left(\sqrt{\frac{1+\sum_{i=1}^{j-1}|\mathfrak{I}_i|}{\pi}}\right)\neq\emptyset \right\}
.\eae
By the Marking Theorem \cite[Section 5.2]{kingman1992poisson}, $\left\{\mathfrak{I}_j\right\}_{j=1}^{2^k}$ are conditionally independent Poisson Point Processes: Conditional on $\{\mathfrak{I}_i\}_{i=1}^{j-1}$,  $\mathfrak{I}_j$ is an independent PPP with intensity,
\bae\label{eq:conditional density_brownian}
\lambda \prob_x&\left( \mathfs{D}^x_{[(j-1)2^{-k}t,j2^{-k}t]}\cap B\left(\sqrt{\frac{1+\sum_{i=1}^{j-1}|\mathfrak{I}_i|}{\pi}}\right)\neq\emptyset , \right.\\
&\left.  \forall l<j, ~\mathfs{D}^x_{[(l-1)2^{-k}t,l2^{-k}t]}\cap B\left(\sqrt{\frac{1+\sum_{i=1}^{l-1}|\mathfrak{I}_i|}{\pi}}\right)=\emptyset  \right) 
.\eae
%For the case of the annihilating process $\CA_t$ \note{not annihilating}, since the probability that any Brownian motion hits the aggregate in the interval $[t-2^{-k},t]$ is summable as $k\to\infty$. By Borel-Cantelli, the process $\mathfrak{I}_{2^k}$ converges a.s, and in fact, there is a finite a.s. $K>0$ such that for all $k_1,k_2>K$, $\mathfrak{I}_{2^{k_1}}=\mathfrak{I}_{2^{k_2}}=\mathfrak{K}_t$. By construction, we obtain that the conditional distribution of $\mathfrak{K}_t$ is as stated. 

We need to take into account the engulfing procedure. The idea is to show that we can encode the instantaneous engulfing procedure in the first several iterations of the discrete time process \eqref{eq:discretetimeprocess_brownian}. Recall the notations of Section \ref{Sec:intro}, and in particular \eqref{eq:engulfing_times}, of the successive particle arrival times that induce the engulfing procedure $\hat\tau_k$. 

Assuming that $\CE_{0}<\infty$, $N<\infty$ (for $t=0$ engulfing procedure), and $\hat\tau_1>0$ is well defined. Take $k_0$, large enough such that
\begin{equation}\label{eq:k0cond_brownian}
   N 2^{-k_0}<\hat\tau_1. 
\end{equation} 
First it is immediate that $A_0\cap B(r_0)\subset\mathfrak{I}_1$.
Next, by path-continuity of Brownian motion, for every $x\in A_0\cap B(r_1)\setminus B(r_0)$, we can find a $k_1>k_0$ large enough such that $\forall k>k_1$, $\left|\mathfs{D}^x_{[2^{-k},2\cdot2^{-k}]}\right|<r_1 $. Thus, for any $k>k_1$, $A_0\cap B(r_1)\setminus B(r_0)\subset \mathfrak{I}_1\cup \mathfrak{I}_2$. Continuing recursively, we get that, there exists a $k_N>k_{N-1}$, such that for any $k>k_N$, $A_0\cap B(r_N)\setminus B(r_{N-1})\subset \bigcup_{i=1}^N\mathfrak{I}_i$. On the other hand, by \eqref{eq:k0cond_brownian} and $k_N>k_0$, one obtains, by possibly choosing a $k$ larger such that particles that are not part of the engulfing procedure at $t=0$, will also not be in the first $N$, $\mathfrak{I}_i$'s,  $\sum_{i=1}^{N}|\mathfrak{I}_i|=\sum_{i=1}^N\xi_i$. Thus, if $t<\hat\tau_1$, then we are done. Otherwise, we obtain that for some $l_1(k)$, $\hat\tau_1\in[(l_1-1)2^{-k},l_12^{-k}]$, and assuming the engulfing procedure ends at a finite stage, for $k$ large enough $\hat\tau_2\notin[(l_1-1)2^{-k},(l_1+N)2^{-k}]$. By \eqref{eq:conditional density_brownian}, we obtain that conditional on $\{\mathfrak{I}_i\}_{i=1}^{l_1}$,  $\mathfrak{I}_{l_1+1}$ is an independent PPP with intensity,
\bae
\lambda \prob_x&\left( \mathfs{D}^x_{[l_1 2^{-k}t,(l_1+1)2^{-k}t]}\cap B\left(\sqrt{\frac{1+\sum_{i=1}^{l_1}|\mathfrak{I}_i|}{\pi}}\right)\neq\emptyset , \right.\\
&\left.  \forall l<l_1, ~\mathfs{D}^x_{[(l-1)2^{-k}t,l2^{-k}t]}\cap B\left(\sqrt{\frac{1+\sum_{i=1}^{l-1}|\mathfrak{I}_i|}{\pi}}\right)=\emptyset  \right) 
.\eae
By the strong Markov property of Brownian motion, we can now repeat the previous argument, by adjusting \eqref{eq:k0cond} to admit the number of engulfing steps in the time interval $\hat\tau_2-\hat\tau_1$. If all engulfing procedures end before time $t$, we are done. Otherwise we obtain that there are no active particles at time $t$, thus the statement follows trivially.  
\end{proof}
%%%%%%%%%%%%%%%%%%%%%%%%%%%%%%%%%%%%%%%%%%%%%%%%%%%%%%%%%%%%%%%%%%%%%%%%%%%%%%%%%%%%%%%%%%%%%%%%%%%%%%%%%%%%%%%%%%%%%%%%%%%%%%

We also include here the random stopping time version of Kurtz's theorem for the case of Brownian particles.
\begin{corollary}\label{cor:random_kurtz_brownian}
For every $k\in\BN$, the random collection of points $$\mathfrak{K}_{\hat\tau_k}=\{x\in A_{\hat\tau_k}: |x|>\CE_{\hat\tau_k}, a(x,{\hat\tau_k})=1\},$$ conditional on $\sigma\{\CE_s:s\le {\hat\tau_k}\}$ is an independent Poisson point process, with intensity measure $\lambda \prob_x(|\mathfs{D}_s|>\CE_{{\hat\tau_k}-s}, \forall s\le {\hat\tau_k})$.
\end{corollary}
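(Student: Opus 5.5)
Corollary~\ref{cor:random_kurtz_brownian} should follow from the proof of Theorem~\ref{thm:kurtz_brownian} in the same way Corollary~\ref{cor:random_kurtz} follows from the proof of Theorem~\ref{thm:kurtz}. The plan is to fix $k$ and work on the event $\{\hat\tau_k<\infty,\ \CE_{\hat\tau_k}<\infty\}$; off this event there are no active particles and the statement is trivial. The idea is to replace the fixed time $t$ by the random time $\hat\tau_k$ and use that $\hat\tau_k$ depends only on the growth history $\sigma\{\CE_s:s\le\hat\tau_k\}$.

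First, I would condition on $\mathcal{F}_{\hat\tau_k^-}=\sigma\{\CE_s:s<\hat\tau_k\}$ together with the value of $\hat\tau_k$. By the argument in the proof of Theorem~\ref{thm:kurtz_brownian}, applied to the deterministic horizon $t$ and then restricted to $\{\hat\tau_{k-1}\le t<\hat\tau_k\}$, the active field at any time $s<\hat\tau_k$ is a Poisson point process. Its intensity is
\[
\lambda\,\prob_x\bigl(|\mathfs{D}_u|>\CE_{s-u},\ \forall u\le s\bigr).
\]
To pass to the random time $\hat\tau_k$, I would approximate it from above by the dyadic times $\hat\tau_k^{(m)}=2^{-m}\lceil 2^m\hat\tau_k\rceil$. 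This uses the discretization $\mathfrak{I}_j$ with mesh $2^{-m}$, whose cells are exactly where the marking theorem gives conditional independence. On each atom $\{\hat\tau_k\in((j-1)2^{-m},j2^{-m}]\}$, which is determined by $\mathfrak{I}_1,\dots,\mathfrak{I}_j$, the marking theorem gives that the unmarked particles are a Poisson process with the product-form intensity~\eqref{eq:conditional density_brownian}. This decomposition over the countably many atoms is the discrete strong Markov step.

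Next, I would handle the arrival at $\hat\tau_k$ and the instantaneous engulfing that follows. At $\hat\tau_k$, one particle hits $\partial B(\CE_{\hat\tau_{k-1}})$. The particles engulfed in the subsequent cascade are exactly those lying in the successive shells $B(r_j)\setminus B(r_{j-1})$ at that instant. Conditioning on the radii $r_0,\dots,r_N$ amounts to conditioning on the counts of Poisson points in disjoint shells. By the restriction property, the points outside $B(r_N)=B(\CE_{\hat\tau_k})$ remain an independent Poisson process with the same intensity, restricted to $B(\CE_{\hat\tau_k})^c$. The hitting particle itself is on the boundary, so it is removed. As in the theorem's proof, path continuity lets me choose the mesh fine enough that the cascade is captured by the first $N$ cells after $\hat\tau_k$, before any further arrival.

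Finally, I would let $m\to\infty$. By path continuity, the indicators $\mathbbm 1\{\mathfs{D}^x_{[(l-1)2^{-m},l2^{-m}]}\cap B(\cdot)=\emptyset\ \forall l\}$ converge a.s. to $\mathbbm 1\{|\mathfs{D}_s|>\CE_{\hat\tau_k-s},\ \forall s\le\hat\tau_k\}$. By dominated convergence, the intensities converge, and Poisson laws are stable under such limits (their Laplace functionals converge). The main obstacle is this convergence across the random time together with the engulfing. I expect the difficulty to be showing that conditioning on the discretized history and on the exact value of $\hat\tau_k$ asymptotically agree, namely that the hitting particle is a null-intensity event for the remaining field. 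I would first try to prove this via Laplace functionals: compute $E[e^{-\sum f(\cdot)}\mid\ldots]$ on the dyadic atoms and pass to the limit.
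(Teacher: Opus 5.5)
Your plan follows essentially the same route as the paper. The paper gives no separate argument for this corollary and regards it as contained in the proof of Theorem~\ref{thm:kurtz_brownian}, which uses the same ingredients as you: dyadic discretization, the marking theorem on the cells $\mathfrak{I}_j$, path continuity to capture the instantaneous cascade in the first $N$ cells after an arrival, and a strong Markov restart at each $\hat\tau_k$ before refining the mesh. The step you flag as the main obstacle, passing from conditional laws given the discretized counts to conditional laws given $\sigma\{\CE_s:s\le\hat\tau_k\}$, is left equally implicit in the paper. Your Laplace-functional plan, tested against bounded continuous functions of $(\hat\tau_i,\CE_{\hat\tau_i})_{i\le k}$ on the event where the discretized and continuous processes agree, is the natural way to make it rigorous.
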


%%%%%%%%
% \section{Useful claims about branching processes}\note{Do we need it?}
% The following is the exponential limit law of conditioned critical GW process \cite[Theorem 2]{athreya1972branching}.
% \begin{theorem}
% If $\lambda=1$ then,
% \bae
% \lim_{n\to\infty}\prob\left(\frac{Z_n}{n}>z\Big| Z_n>0\right)=e^{-cz},~ z\ge0.
% \eae
% \end{theorem}
% One conclusion is the the total progeny is of the same order as the extinction time. 

%\newpage
%\input{q argument}
%\newp
%\section{Setup}
%\input{setup}
%\input{upper boun
\bibliographystyle{alpha}
\bibliography{bibliography}

%\printbibliography
\end{document}